\numberwithin{equation}{section}
\newtheorem{definition}{Definition}[section]
\newtheorem{theorem}{Theorem}[section]
\newtheorem{lemma}{Lemma}[section]
\newtheorem{remark}{Remark}[section]
\title{\bf The Maximum Principle for Discounted Optimal Control of Partially Observed Forward-Backward Stochastic Systems with Jumps on Infinite Horizon
\thanks{This work is supported by National Key R\&D Program of China (Grant No. 2018YFB1305400), National Natural Science Foundations of China (Grant Nos. 11971266, 11831010, 11571205), and Shandong Provincial Natural Science Foundations (Grant Nos. ZR2020ZD24, ZR2019ZD42).}}
\author{\normalsize Yueyang Zheng\thanks{\it School of Mathematics, Shandong University, Jinan 250100, P.R.China, E-mail: zhengyueyang0106@163.com} , Jingtao Shi\thanks{\it Corresponding author. School of Mathematics, Shandong University, Jinan 250100, P.R.China, E-mail: shijingtao@sdu.edu.cn}}
\begin{document}

\maketitle

\noindent{\bf Abstract:}
This paper is concerned with a discounted optimal control problem of partially observed forward-backward stochastic systems with jumps on infinite horizon. The control domain is convex and a kind of infinite horizon observation equation is introduced. The uniquely solvability of infinite horizon forward (backward) stochastic differential equation with jumps is obtained and more extended analysis, especially for the backward case, is made. Some new estimates are first given and proved for the critical variational inequality. Then an ergodic maximum principle is obtained by introducing some infinite horizon adjoint equations whose uniquely solvabilities are guaranteed necessarily. Finally, some comparison are made with two kinds of representative infinite horizon stochastic systems and their related optimal controls.

\vspace{1mm}

\noindent{\bf Keywords:}
Partially observed maximum principle, infinite horizon forward-backward stochastic differential equation with jumps, discounted optimal control

\vspace{1mm}

\noindent{\bf Mathematics Subject Classification:}\quad 93E20, 49K45, 49N10, 49N70, 60H10

\section{Introduction}

Maximum principle is an important tool to solve the stochastic optimal control problem, by which the necessary condition for optimality is usually obtained. It is acknowledged that Peng \cite{Peng90} firstly introduced the general stochastic maximum principle for optimal control problems, where the control domain need not be convex and the diffusion coefficient can contain a control variable. Since then, plenty of mature results about global and local maximum principle are obtained in succession. For non-convex control domain, Situ \cite{Situ91} first obtained the global maximum principle for the controlled {\it stochastic differential equation with Poisson jumps} (SDEP for short) where the control variable is not included in the jump coefficient. Then Tang and Li \cite{TL94} and Li and Tang \cite{LT95} extended the general result in \cite{Peng90} into the case with random jumps and partially observed case, respectively. Wang and Wu \cite{WW09} studied a maximum principle for partially observed stochastic recursive optimal control problem  and forward diffusion coefficients do not contain control variables. Recently, global stochastic maximum principle for recursive utilities and fully coupled {\it forward-backward stochastic differential equations} (FBSDEs for short) were obtained in Hu \cite{Hu17} and Hu et al. \cite{HJX18}, respectively. Hao and Meng \cite{HM20} proved a maximum principle of optimal control problem for a class of general mean-field forward-backward stochastic systems with jumps where the diffusion coefficients depend on control, but the coefficients of jump terms are independent of control. Song et al. \cite{STW20} obtained a maximum principle for progressive optimal control of SDEPs by introducing a new method of variation  which is a correction of \cite{TL94}. For convex control domain, Peng \cite{Peng93} investigated the local maximum principle for forward-backward stochastic optimal control systems. Then Wu \cite{Wu10} studied an optimal control problem for partially observed forward-backward stochastic control system with the forward diffusion term containing control variable. Wang et al. \cite{WWX13} studied a partially observed optimal control problem of FBSDEs with correlated noises between the system and the observation, and obtained three versions of maximum principle. Then Zhang et al. \cite{ZXL18} considered the partially observed optimal control problem of {\it forward-backward stochastic differential equations with Poisson jumps} (FBSDEPs for short) with Markovian regime switching and developed a stochastic maximum principle.

The results mentioned above are based on the finite horizon. However, relatively few papers study the maximum principle for stochastic optimal control problems on infinite horizon. In fact, in many dynamic optimization problems in economics, finance and insurance, one need to consider problems on infinite horizon. Moreover, many mathematical difficulties are encountered when dealing with problems on infinite horizon. Let us mention a few. In an earlier time, Halkin \cite{Halkin74} gave some necessary conditions for the deterministic optimal control problem on infinite horizon. Maslowski and Veverka \cite{MV14} established the sufficient Pontryagin's maximum principle for an infinite horizon discounted stochastic control problem where the control domain is convex and bounded. Haadem et al. \cite{HOP13} proved the sufficient and necessary maximum principles for an infinite horizon control problem of SDEPs with partial information, where they firstly required a limit inequality on the terminal condition of infinite horizon {\it backward stochastic differential equations with Poisson jumps} (BSDEPs for short). Socgnia and Menoukeu-Pamen \cite{SP15} gave an infinite horizon stochastic maximum principle for a forward-backward stochastic systems with non-smooth coefficients since the value function is given by a discounted cost functional, assuming that the state coefficients are Lipschitz (with the diffusion coefficient being degenerate). Orrieri and Veverka \cite{OV17} developed an infinite horizon global stochastic maximum principle for stochastic control problem with a discounted cost functional under a polynomial growth and joint monotonicity assumption on the coefficients where the control domain is not necessarily convex. Moreover, Orrieri et al. \cite{OTV19} presented a stochastic maximum principle for ergodic control problem and gave the necessary and sufficient conditions for optimality for controlled dissipative systems. More related infinite horizon optimal control problems, see Agram et al. \cite{AHOP13}, Agram and \O ksendal \cite{AO14}, Muthukumar and Deepa \cite{MD17}, Yang and Wu \cite{YW20}, Ma and Liu \cite{ML17}, Wei and Yu \cite{WY21}, Mei et al. \cite{MWY21} and the references therein.

In the infinite horizon optimal control problem, it is necessary to guarantee the existence and uniqueness of solutions to the infinite horizon forward-backward stochastic systems and forward-backward stochastic systems with Poisson jumps which is one of the main results in our paper. In fact, there are also many results on this aspect. Peng \cite{Peng91} proved the existence and uniqueness of solutions to BSDEs with a stopping time under the Lipschitz and monotonicity conditions. Chen \cite{Chen98} also discussed the similar problem on a random time interval where $g$ satisfies Lipschitz condition with positive Lipschitzian function. Peng and Shi \cite{PS00} firstly investigated a class of infinite horizon FBSDEs and established its existence and uniqueness result with the method of continuation. Then Wu \cite{Wu03} studied the BSDEP in stopping time (unbounded) duration, under a suitable Lipschitz condition, the existence and uniqueness result was got by fixed point theorem, then proved the uniquely solvability of fully coupled FBSDEPs with stopping time under Lipschitz and monotone assumptions. Yin and Situ \cite{YS03} used a purely probabilistic approach to study FBSDEPs with stopping time and obtained the existence and uniqueness results of solutions under some weak monotonicity conditions and Lipschitz condition. Yin and Mao \cite{YM08} investigated a class of BSDEPs with random terminal time and proved its existence and uniqueness result under the assumption of non-Lipschitzian coefficient. Yin \cite{Yin08}, \cite{Yin11} studied the uniquely solvability of infinite horizon FBSDEs and FBSDEs with random terminal time by constructing a contraction mapping, respectively. Yu \cite{Yu17} extended the infinite horizon forward-backward stochastic systems in \cite{PS00} into the case with random jumps, and similarly, an existence and uniqueness theorem was established under some monotonicity conditions. Very recently, Shi and Zhao \cite{SZ20} considered a similar infinite horizon framework in \cite{PS00} but discussed the existence and uniqueness of solutions in an arbitrarily large space for infinite horizon FBSDEs. Wei and Yu \cite{WY21} put forward a {\it linear-quadratic} (LQ for short) stochastic control problem with random coefficients, and due to the introduction of a parameter $K$ in the discounted cost functional, they introduced a new infinite horizon version of domination-monotonicity condition, and obtained an existence and uniqueness result and its related estimates of the solutions to a kind of infinite horizon coupled FBSDEs. Moreover, they found some different results between the infinite horizon and finite horizon case when applying the theoretical result to the uniquely solvability of LQ FBSDEs (also called Hamiltonian systems).

In this paper, we study a partially observed discounted optimal control problem of forward-backward stochastic systems with jumps on infinite horizon. In summary, the contributions of this paper include the following.

(1) We first obtain the uniqueness and existence of solutions to infinite horizon SDEP and prove the uniquely solvability of BSDEP and their estimates. It is interesting and inspiring to make detailed analysis for the existence of solution to infinite horizon BSDEP. We first prove it in a small space ($\beta<0$) where both two kinds of truncated auxiliary BSDEPs can approximate \eqref{ifbsdep01} very well (see Remark \ref{rem25} and Lemma \ref{lemma23}), then we extend it in a big space ($\beta>0$) by a new Definition \ref{definition22} in which the condition $\beta<0$ discussed for existence in the small space can be relaxed. See Remark \ref{remark26} and the following discussion. Moreover, the uniquely solvability of infinite horizon adjoint FBSDEPs in Section 3.3 are also obtained by combining the above theorems with the existing literature on finite horizon.

(2) We give a kind of infinite horizon observation equation, which contributes to the Girsanov's theorem by a sufficient ``weak Novikov condition" on infinite horizon. Then a transformed completely observed optimal control problem of infinite horizon FBSDEP is studied and the ergodic maximum principle is obtained. Meanwhile, some new and necessary high-order estimates are the most important contribution in our paper, in which Lemma \ref{lemma31} is vital and beneficial to other estimates, Lemma \ref{lemma32}--\ref{lemma36}. The difference is that the flexible discount factors for state and control variable play key roles in these estimates and may take different values, $\beta_0,\cdots,\beta_5$, which is an adjustable idea to obtain our variational inequality \eqref{variation inequality}.

(3) We make some comparison with the existing results, especially for two kinds of representative existing infinite horizon stochastic systems and their related optimal controls, and some necessary analyses are inspiring.

The rest of this paper is organized as follows. In Section 2, we formulate the discounted optimal control problem of partially observed forward-backward stochastic systems with jumps on infinite horizon, and obtain the uniquely solvability of infinite horizon SDEPs and BSDEPs. Then some new and important estimates are given to derive the vital variational inequality and infinite horizon adjoint equations are introduced to obtain the ergodic maximum principle in Section 3. In Section 4, we make some comparisons with two kinds of representative infinite horizon systems and their related optimal control problems.

\section{Problem formulation and preliminaries}

Consider a complete filtered probability space $(\Omega,\mathcal{F},(\mathcal{F}_t)_{t\geq0},\bar{\mathbb{P}})$ and two one-dimensional independent standard Brownian Motions $W(\cdot)$ and $\tilde{W}(\cdot)$ defined in $\mathbb{R}^2$ with $W_0=\tilde{W}_0=0$. Let $(\mathcal{E},\mathcal{B}(\mathcal{E}))$ be a Polish space with the $\sigma$-finite measure $\nu$ on $\mathcal{E}$. Suppose that $N(de,dt)$ is a Poisson random measure on $(\mathcal{E}\times\mathbb{R}^+,\mathcal{B}(\mathcal{E})\times\mathcal{B}(\mathbb{R}^+))$ under $\bar{\mathbb{P}}$ and for any $E\in\mathcal{B}(\mathcal{E})$, since $\nu(E)<\infty$, then the compensated Poisson random measure is given by $\tilde{N}(de,dt)=N(de,dt)-\nu(de)dt$. Moreover, $W(\cdot),\tilde{W}(\cdot),N(\cdot,\cdot)$ are mutually independent under $\bar{\mathbb{P}}$, and let $(\mathcal{F}_t^W)_{t\geq0},(\mathcal{F}_t^{\tilde{W}})_{t\geq0},(\mathcal{F}_t^N)_{t\geq0}$ be the $\bar{\mathbb{P}}$-completed natural filtrations generated by $W(\cdot),\tilde{W}(\cdot),N(\cdot,\cdot)$, respectively. Set $\{\mathcal{F}_t\}_{t\geq0}:=\{\mathcal{F}_t^W\vee\mathcal{F}_t^{\tilde{W}}\vee\mathcal{F}_t^N\}_{t\geq0}\vee\mathcal{N}$ and $\mathcal{F}_\infty=\bigvee_{t\geq0}\mathcal{F}_t\subset\mathcal{F}$, where $\mathcal{N}$ denotes the totality of $\bar{\mathbb{P}}$-null sets. $\bar{\mathbb{E}}$ denotes the expectation under the probability $\bar{\mathbb{P}}$, and $|\cdot|$ denotes the Euclidean norm in $\mathbb{R}$.

We consider the following controlled infinite horizon FBSDEP:
\begin{equation}\label{iffbsdep}
\left\{
\begin{aligned}
 dx_t^u&=b(x_t^u,u_t)dt+\sigma(x_t^u,u_t)dW_t+\tilde{\sigma}(x_t^u,u_t)d\tilde{W}_t+\int_{\mathcal{E}}l(x_{t-}^u,u_{t-},e)\tilde{N}(de,dt),\\
-dy^u_t&=g\Big(x_t^u,y_t^u,z_t^u,\tilde{z}_t^u,\int_\mathcal{E}\gamma_{(t,e)}^u\nu(de),u_t\Big)dt-z_t^udW_t-\tilde{z}_t^ud\xi_t-\int_{\mathcal{E}}\gamma_{(t,e)}^u\tilde{N}(de,dt),\ t\in[0,\infty),\\
  x_0^u&=x_0,
\end{aligned}
\right.
\end{equation}
and the observation process satisfied the following SDE:
\begin{equation}\label{observation}
\left\{
\begin{aligned}
d\xi_t^u&=h(x_t^u,u_t)dt+d\tilde{W}_t,\ t\in[0,\infty),\\
\xi_0&=0,
\end{aligned}
\right.
\end{equation}
where $u(\cdot)$ is a control process taking values in the convex subset $U$ of $\mathbb{R}$, the mappings $b:\mathbb{R}\times U\rightarrow\mathbb{R}$, $\sigma:\mathbb{R}\times U\rightarrow\mathbb{R}$, $\tilde{\sigma}:\mathbb{R}\times U\rightarrow\mathbb{R}$ and $l:\mathbb{R}\times U\times\mathcal{E}\rightarrow\mathbb{R}$ satisfy the detailed conditions shown in the following {\bf (A1)--(A4)}, $g:\mathbb{R}\times\mathbb{R}\times\mathbb{R}\times\mathbb{R}\times\mathbb{R}\times U\rightarrow\mathbb{R}$ satisfies {\bf (A5)--(A8)}. And the mapping $h:\mathbb{R}\times U\rightarrow\mathbb{R}$ satisfies the following {\bf (H0)}.

{\bf (H0)} $h(\cdot,\cdot)$ is $\mathscr{B}(\mathbb{R})\otimes\mathscr{B}(\mathbb{R})/\mathscr{B}(\mathbb{R})$ measurable and continuous on $\mathbb{R}\times U$ and bounded. $h$ is continuously differential in $(x,u)$ and its derivatives $h_x,h_u$ are both bounded.

\begin{remark}
In the partially observed case, there exists correlated noises $\tilde{W}(\cdot)$ between state equation and observation equation. Thus we usually need assume the diffusion coefficient $\tilde{\sigma}$ and the drift coefficient $h$ in the related observation equation are bounded by some constants in the finite horizon case. However, in our partially observed problem with infinite horizon, the simple constant bounded condition is too strong to support the well-posedness of the term $\int_0^\infty\tilde{\sigma}(\cdot,\cdot)^2dt$. Therefore, we need give a new condition in {\bf (A1)} that $\tilde{\sigma}(\cdot,\cdot)\leq\mu_0(t)$, where $\mu_0(\cdot)$ should satisfy the convergence when integrating in the infinite time domain, which is different from the finite horizon case in \cite{WWX13}.
\end{remark}

Our aim is to minimize the discounted cost functional as follows:
\begin{equation}\label{cf}
J(u)=\bar{\mathbb{E}}\bigg[\int_0^\infty e^{-\beta t}f\Big(x_t^u,y_t^u,z_t^u,\tilde{z}_t^u,\int_{\mathcal{E}}\gamma_{(t,e)}^u\nu(de),u_t\Big)dt+\phi(y_0^u)\bigg],\ \text{with }\beta\ \text{big enough},
\end{equation}
where the mappings $f:\mathbb{R}\times\mathbb{R}\times\mathbb{R}\times\mathbb{R}\times\mathbb{R}\times U\rightarrow\mathbb{R}$ and $\phi:\mathbb{R}\rightarrow\mathbb{R}$ satisfy the following {\bf (H1)}.

{\bf (H1)} $f(\cdot,\cdot,\cdot,\cdot,\cdot,\cdot)$ is $\mathscr{B}(\mathbb{R})\otimes\mathscr{B}(\mathbb{R})\otimes\mathscr{B}(\mathbb{R})\otimes\mathscr{B}(\mathbb{R})\otimes\mathscr{B}(\mathbb{R})\otimes\mathscr{B}(\mathbb{R})/\mathscr{B}(\mathbb{R})$ measurable and is continuously differential in $(x,y,z,\tilde{z},\gamma,u)$, and $\phi(\cdot)$ is $\mathscr{B}(\mathbb{R})/\mathscr{B}(\mathbb{R})$ measurable and  continuously differential in $y$, and there exists some constant $C$ such that
\begin{equation}\label{f}
\begin{aligned}
&\big(1+|x|^2+|y|^2+|z|^2+|\tilde{z}|^2+||\gamma||^2+|u|^2\big)^{-1}|f(x,y,z,\tilde{z},\gamma,u)|\\
&\quad +\big(1+|x|+|y|+|z|+|\tilde{z}|+||\gamma||+|u|\big)^{-1}|f_{(x,y,z,\tilde{z},\gamma,u)}(x,y,z,\tilde{z},\gamma,u)|\leq C,\\
&\big(1+|y|^2\big)^{-1}|\phi(y)|+\big(1+|y|\big)^{-1}|\phi_y(y)|\leq C.
\end{aligned}
\end{equation}

\begin{remark}\label{rem22}
For the infinite horizon discounted cost functional \eqref{cf}, different from the case without discounting in \cite{HOP13}, we introduce the parameter $\beta$ (which is usually called discount factor in terminology) in the improper integrable form. In fact, one one hand, we can guarantee that the cost functional is well-posed by {\bf (H1)} and some estimates. The discussion will be mentioned in the following Remark \ref{rem26} in details. On the other hand, we can give an alternative condition ${\bf (H1^\prime)}$ of {\bf (H1)} as follows.

${\bf (H1^\prime)}$ $f(\cdot,\cdot,\cdot,\cdot,\cdot,\cdot), \phi(\cdot)$ satisfy the same conditions with {\bf (H1)} except for the following. There exists some positive function $C(t):=e^{-Kt}$ decided by some adjustable parameter $K$ such that \eqref{f} holds by replacing $C$ with $C(t)$.

Different from \eqref{f} itself, we call the new form in ${\bf (H1^\prime)}$ the exponential weighting quadratic growth condition. Moreover, the alternative condition can show nice property. To be specific, when $K$ are positive enough, we may take negative value $\beta$ such that the cost functional is still well-posedness, which can relax the condition satisfied by $\beta$. In addition, ${\bf (H1^\prime)}$ is also adapted to the derivation of variational inequality in Section 3.2 and can improve and optimize our result by adjusting another parameter $K$. Even if it is enough that all the result in this paper are obtained under {\bf (H1)}, which can imply the same result under ${\bf (H1^\prime)}$, the condition ${\bf (H1^\prime)}$ is still worthy of consideration in the weaker framework which will be investigated in our future work, in the sense, {\bf (H1)} is stronger than ${\bf (H1^\prime)}$.
\end{remark}

\begin{remark}
Notice that the Girsanov's theorem can be applied only if the sufficient Novikov's condition holds. However, different from the finite horizon case, the assumption $|h(\cdot,\cdot)|\leq C$ can not guarantee that
\begin{equation}
\bar{\mathbb{E}}\bigg[\exp\bigg(\frac{1}{2}\int_0^\infty|h(x_t^u,u_t)|^2dt\bigg)\bigg]<\infty
\end{equation}
holds. Thus, in the infinite horizon case, we need put forward a ``weak Novikov's condition"
\begin{equation}
\bar{\mathbb{E}}\bigg[\exp\bigg(\frac{1}{2}\int_0^\infty e^{-\beta t}|h(x_t^u,u_t)|^2dt\bigg)\bigg]<\infty.
\end{equation}
Under this weak Novikov's condition, we should consider the corresponding observation equation as follows
\begin{equation}\label{observation01}
\left\{
\begin{aligned}
d\xi_t^u&=e^{-\frac{\beta}{2}t}h(x_t^u,u_t)dt+d\tilde{W}_t,\ t\in[0,\infty),\\
\xi_0&=0.
\end{aligned}
\right.
\end{equation}
Indeed, when $h$ is bounded, we can guarantee that the integral $\int_0^\infty e^{-\frac{\beta}{2}t}h(x^u_t,u_t)dt$ is finite. Therefore, it is reasonable to give the weak Novikov's condition in the infinite horizon case.
\end{remark}

By the above discussion, and the {\it infinite horizon Novikov's condition}
\begin{equation}
\bar{\mathbb{E}}\bigg[\exp\bigg(\frac{1}{2}\int_0^\infty|e^{-\frac{\beta}{2}t}h(x_t^u,u_t)|^2dt\bigg)\bigg]<\infty,
\end{equation}
then by Girsanov theorem, we introduce a new probability measure $\mathbb{P}$ such that
\begin{equation}
\frac{d\bar{\mathbb{P}}}{d\mathbb{P}}\bigg|_{\mathcal{F}_t}=\mathcal{Z}_t,
\end{equation}
where
\begin{equation}\label{RN}
\mathcal{Z}_t=\exp\bigg\{\int_0^te^{-\frac{\beta}{2}s}h(x_s^u,u_s)d\xi_s-\frac{1}{2}\int_0^te^{-\beta s}|h(x_s^u,u_s)|^2ds\bigg\}
\end{equation}
is an exponential martingale, which satisfies
\begin{equation}\label{mathcalZ}
\left\{
\begin{aligned}
d\mathcal{Z}_t^u&=e^{-\frac{\beta}{2}t}\mathcal{Z}_th(x_t^u,u_t)d\xi_t,\ t\in[0,\infty),\\
\mathcal{Z}_0^u&=1.
\end{aligned}
\right.
\end{equation}
Substituting \eqref{observation01} into the state equation \eqref{iffbsdep}, we have
\begin{equation}\label{iffbsdep01}
\left\{
\begin{aligned}
 dx_t^u&=\tilde{b}(x_t^u,u_t)dt+\sigma(x_t^u,u_t)dW_t+\tilde{\sigma}(x_t^u,u_t)d\xi_t+\int_\mathcal{E}l(x_{t-}^u,u_{t-},e)\tilde{N}(de,dt),\\
-dy^u_t&=g\Big(x_t^u,y_t^u,z_t^u,\tilde{z}_t^u,\int_{\mathcal{E}}\gamma_{(t,e)}^u\nu(de),u_t\Big)dt-z_t^udW_t-\tilde{z}_t^ud\xi_t-\int_{\mathcal{E}}\gamma_{(t,e)}^u\tilde{N}(de,dt),\ t\in[0,\infty),\\
  x_0^u&=x_0,
\end{aligned}
\right.
\end{equation}
where $\tilde{b}(x^u_t,u_t):=b(x_t^u,u_t)-e^{-\frac{\beta}{2}t}\tilde{\sigma}(x_t^u,u_t)h(x_t^u,u_t)$.

\begin{definition}
Let $\beta\in\mathbb{R}$, and $\mathbb{E}$ be the expectation corresponding to $\mathbb{P}$, then we introduce the following spaces:
\begin{equation*}
\begin{aligned}
L^2(\mathcal{E},\mathcal{B}(\mathcal{E}),\nu;\mathbb{R})&:=\bigg\{\gamma:\mathcal{E}\rightarrow\mathbb{R}\mbox{ such that }\bigg(\int_{\mathcal{E}}|\gamma(e)|^2\nu(de)\bigg)^{\frac{1}{2}}<\infty\bigg\},\\
L^{2,-\beta}_{\mathcal{F}}(0,\infty;\mathbb{R})&:=\bigg\{v:[0,\infty)\times\Omega\rightarrow\mathbb{R}\bigg|\text{v is}\ (\mathcal{F}_t)_{t\geq0}\text{-progressively measurable }\\
                                               &\qquad \text{ with}\ \mathbb{E}\int_0^\infty e^{-\beta t}|v_t|^2dt<\infty\bigg\},\\
L^{2,-\beta}_{\mathcal{F},\nu}(0,\infty;\mathbb{R})&:=\bigg\{v:[0,\infty)\times\Omega\times\mathcal{E}\rightarrow\mathbb{R}\bigg|\text{v is}\ (\mathcal{F}_t)_{t\geq0}\text{-predictable}\\
                                                   &\qquad \text{ with}\ \mathbb{E}\int_0^\infty\int_{\mathcal{E}} e^{-\beta t}|v_{(t,e)}|^2\nu(de)dt<\infty\bigg\},
\end{aligned}
\end{equation*}
and a smaller space, for $k\geq1$,
\begin{equation*}
\begin{aligned}
L^{2k,-\beta}_{\mathcal{F},N}(0,\infty;\mathbb{R})&:=\bigg\{v:[0,\infty)\times\Omega\times\mathcal{E}\rightarrow\mathbb{R}\bigg|\text{v is}\ (\mathcal{F}_t)_{t\geq0}\text{-predictable}\\
                                                  &\qquad \text{ with}\ \mathbb{E}\bigg(\int_0^\infty\int_{\mathcal{E}}e^{-\beta t}|v_{(t,e)}|^2N(de,dt)\bigg)^{k}<\infty\bigg\},
\end{aligned}
\end{equation*}
where $L^{2k,-\beta}_{\mathcal{F},N}(0,\infty;\mathbb{R})=L^{2,-\beta}_{\mathcal{F},\nu}(0,\infty;\mathbb{R})$ when taking $k=1$, and
\begin{equation*}
\begin{aligned}
\mathcal{S}^{2k,-\beta}(0,\infty)&:=\bigg\{v:[0,\infty)\times\Omega\rightarrow\mathbb{R}\bigg|\text{v is}\ (\mathcal{F}_t)_{t\geq0}\text{-progressively measurable}\\
                                 &\qquad \text{ with}\ \mathbb{E}\bigg[\sup_{t\in\mathbb{R}_+} e^{-\beta k t}|v_t|^{2k}\bigg]<\infty\bigg\},
\end{aligned}
\end{equation*}
where $\mathcal{S}^{2k,-\beta}(0,\infty)$ reduce to $\mathcal{S}^{2k}[0,T]$ when taking finite horizon and $\beta=0$.
\end{definition}

We need the following assumptions.

{\bf (A1)} $x_0\in L_{\mathcal{F}_0}^{2k}(\Omega;\mathbb{R})$ with $\mathbb{E}|x_0|^{2k}<\infty$. $b(\cdot,\cdot),\sigma(\cdot,\cdot),\tilde{\sigma}(\cdot,\cdot),l(\cdot,\cdot,\cdot)$ are $\mathscr{B}(\mathbb{R})\otimes\mathscr{B}(\mathbb{R})\otimes\mathscr{B}(\mathcal{E})/\mathscr{B}(\mathbb{R})$ measurable and continuous on $\mathbb{R}\times U$, and $b,\sigma,\tilde{\sigma},l$ are continuously differential in $(x,u)$. The derivatives $b_x,b_u,\sigma_x,\sigma_u,\tilde{\sigma}_x,\tilde{\sigma}_u$ and $l_x,l_u$ are bounded. Moreover, there exists a positive function $\mu_0(t)$ (with respect to $t$) such that $\tilde{\sigma}(\cdot,\cdot)$ is bounded by $\mu_0(t)$, where $\mu_0(\cdot)$ satisfies $\int_0^\infty\mu_0^2(t)dt<\infty$, i.e., $\mu_0(t)$ can be $e^{-\frac{\mu_0}{2}t}$.

{\bf (A2)} There exists $\mu_1\in \mathbb{R}$ such that, for $\forall u\in U$,
\begin{equation*}
\left\langle x_1-x_2,b(x_1,u)-b(x_2,u)\right\rangle\leq \mu_1|x_1-x_2|^2.
\end{equation*}

{\bf (A3)} There exists positive constants $L_b,L_\sigma,L_{\tilde{\sigma}},L_l$ such that
\begin{equation*}
\begin{aligned}
&|\Psi(x_1,u)-\Psi(x_2,u)|\leq L_{\Psi}|x_1-x_2|,\ \Psi=b,\sigma,\tilde{\sigma}, \ ||l(x_1,u,e)-l(x_2,u,e)||\leq L_l|x_1-x_2|,
\end{aligned}
\end{equation*}
for any $(x_1,x_2,u)\in\mathbb{R}\times\mathbb{R}\times U$.

{\bf (A4)} For any $u\in U$, there exists a $\beta$ big enough such that $b(0,u)\in L^{2,-\beta}_{\mathcal{F}}(0,\infty;\mathbb{R}),\sigma(0,u)\in L^{2,-\beta}_{\mathcal{F}}(0,\infty;\mathbb{R}),\tilde{\sigma}(0,u)\in L^{2,-\beta}_{\mathcal{F}}(0,\infty;\mathbb{R}),l(0,u,e)\in L^{2,-\beta}_{\mathcal{F},v}(0,\infty;\mathbb{R})$.

{\bf (A5)} $g(\cdot,\cdot,\cdot,\cdot,\cdot,\cdot)$ is $\mathscr{B}(\mathbb{R})\otimes\mathscr{B}(\mathbb{R})\otimes\mathscr{B}(\mathbb{R})\otimes\mathscr{B}(\mathbb{R})\otimes\mathscr{B}(\mathbb{R})\otimes\mathscr{B}(\mathbb{R})/\mathscr{B}(\mathbb{R})$ measurable and continuous on $\mathbb{R}^5\times U$, continuously differentiable in $(x,y,z,\tilde{z},\gamma,u)$ with bounded derivatives.

{\bf (A6)} There exists $\mu_2\in\mathbb{R}$ such that, for $\forall (x,z,\tilde{z},\gamma,u)\in\mathbb{R}^4\times U$,
\begin{equation*}
\left\langle y_1-y_2,g(x,y_1,z,\tilde{z},\gamma_{(t,e)},u)-g(x,y_2,z,\tilde{z},\gamma_{(t,e)},u)\right\rangle\leq\mu_2|y_1-y_2|^2,
\end{equation*}
where for notational simplicity, we define $g(x_t,y_t,z_t,\tilde{z}_t,\gamma_{(t,e)},u)=g(x_t,y_t,z_t,\tilde{z}_t,\int_{\mathcal{E}}\gamma_{(t,e)}\nu(de),u)$, and $(x,y,z,\tilde{z},\gamma_{(t,e)})\in\mathbb{R}^4\times L^2(\mathcal{E},\mathcal{B}(\mathcal{E}),\nu;\mathbb{R})$.

{\bf (A7)} The Lipschitz coefficient $K_3(t)$ of $g$ with respect to $\gamma$ is sufficiently small, i.e., there also exists other positive functions $K_0(t),K_1(t),K_2(t),K_4(t)$ such that, for $\forall x\in\mathbb{R}$,
\begin{equation*}
\begin{aligned}
&|g(x,y_1,z_1,\tilde{z}_1,\gamma_{(1,t,e)},u_1)-g(x,y_2,z_2,\tilde{z}_2,\gamma_{(2,t,e)},u_2)|\\
&\leq K_0(t)|y_1-y_2|+K_1(t)|z_1-z_2|+K_2(t)|\tilde{z}_1-\tilde{z}_2|+K_3(t)||\gamma_{(1,t,e)}-\gamma_{(2,t,e)}||+K_4(t)|u_1-u_2|,
\end{aligned}
\end{equation*}
where $K_0(t),K_1(t),K_2(t),K_3(t)$ and $K_4(t)$ are deterministic functions satisfying
\begin{equation*}
\begin{aligned}
&\int_0^\infty K_i^2(t)dt<\infty,\ i=0,1,2,3,4.
\end{aligned}
\end{equation*}
{\bf (A8)} For $\beta\in\mathbb{R}$,
\begin{equation*}
\beta+2\mu_2+2K_1^2(t)+2K_2^2(t)+2K_3^2(t)+2K_4^2(t)<0,
\end{equation*}
where $\beta$ may be negative or positive. There exists a $\beta_0\in\mathbb{R}$ (which will be decided in the following) such that, for any $(x,u)\in\mathbb{R}\times U$, $\mathbb{E}\big[\int_0^\infty e^{-\beta_0 t}|g(x,0,0,0,0,u)|^2dt\big]<\infty$.

\begin{remark}\label{remark24}
The assumption {\bf (A7)} is similar to {\bf (H2)-(H3)} in \cite{Chen98} which studied the existence and uniqueness for BSDE with stopping time. Moreover, in our infinite horizon case, we take some detail form of $K_0(t),K_1(t),K_2(t),K_3(t)$ as follows:
\begin{equation*}
K_0(t)=e^{-\frac{K_0}{2}t},\ K_1(t)=e^{-\frac{K_1}{2}t},\ K_2(t)=e^{-\frac{K_2}{2}t},\ K_3(t)=e^{-\frac{K_3}{2}t}.
\end{equation*}
\end{remark}

Next, for given $u(\cdot)\in U$, we focus on the solvability of FBSDEP \eqref{iffbsdep01}. For this target, we first consider the infinite horizon SDEP:
\begin{equation}\label{ifsdep01}
\left\{
\begin{aligned}
dx_t&=\tilde{b}(x_t)dt+\sigma(x_t)dW_t+\tilde{\sigma}(x_t)d\xi_t+\int_\mathcal{E}l(x_{t-},e)\tilde{N}(de,dt),\ t\in[0,\infty),\\
 x_0&=x_0,
\end{aligned}
\right.
\end{equation}
with the same coefficients $\tilde{b},\sigma,\tilde{\sigma},l$ as in \eqref{iffbsdep01} but omitting $u$ here temporarily. Firstly, a basic deduction can be given.

\begin{lemma}\label{lem21}
Let {\bf (A1)--(A4)} hold. If infinite horizon SDEP \eqref{ifsdep01} admits a unique solution $x(\cdot)\in L^{2,-\beta}_{\mathcal{F}}(0,\infty;\mathbb{R})$, then we have the following limit asymptotic property
\begin{equation}
\lim_{T\rightarrow\infty}\mathbb{E}\big[|x_T|^2e^{-\beta T}\big]=0.
\end{equation}
\end{lemma}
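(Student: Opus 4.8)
The plan is to apply It\^o's formula to the process $e^{-\beta t}|x_t|^2$ and to read off the asymptotics from the resulting integral identity together with the membership $x(\cdot)\in L^{2,-\beta}_{\mathcal{F}}(0,\infty;\mathbb{R})$. Concretely, writing the It\^o formula for the jump diffusion \eqref{ifsdep01} under $\mathbb{P}$ (recall that $\xi$ is a $\mathbb{P}$-Brownian motion by Girsanov) and taking expectations, I obtain
\begin{equation*}
\mathbb{E}\big[e^{-\beta T}|x_T|^2\big]=\mathbb{E}|x_0|^2+\mathbb{E}\int_0^T e^{-\beta t}\Phi(x_t)\,dt,
\end{equation*}
where
\begin{equation*}
\Phi(x_t):=-\beta|x_t|^2+2x_t\tilde b(x_t)+\sigma^2(x_t)+\tilde\sigma^2(x_t)+\int_{\mathcal{E}}|l(x_{t-},e)|^2\nu(de).
\end{equation*}
Since the stochastic integrals against $dW$, $d\xi$ and $\tilde N$ are a priori only local martingales, I first localize with a sequence of stopping times $\tau_n\uparrow\infty$, establish the identity on $[0,T\wedge\tau_n]$, and then pass to the limit using the integrability furnished below (dominated/monotone convergence) to remove the localization and kill the martingale terms in expectation.

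The next step is to show that the integral on the right converges absolutely as $T\to\infty$, which both justifies the limit transition and shows that $\mathbb{E}[e^{-\beta T}|x_T|^2]$ has a finite limit. Using the monotonicity assumption {\bf (A2)} with comparison point $0$ to bound $2x_tb(x_t)\le 2\mu_1|x_t|^2+2|x_t||b(0)|$, the Lipschitz assumption {\bf (A3)} to bound $\sigma^2,\tilde\sigma^2$ and the jump intensity by $C|x_t|^2$ plus their values at $0$, and controlling the extra term $-e^{-\beta t/2}\tilde\sigma h$ inside $\tilde b$ via $\tilde\sigma\le\mu_0(t)$, the boundedness of $h$, and Young's inequality, I arrive at a pointwise bound
\begin{equation*}
|\Phi(x_t)|\le C|x_t|^2+R_t,
\end{equation*}
where $R_t$ gathers the coefficient values at the origin together with a deterministic remainder proportional to $e^{-\beta t}\mu_0^2(t)$. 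By {\bf (A4)} the coefficient-at-$0$ contributions lie in $L^{2,-\beta}_{\mathcal{F}}$ (resp. $L^{2,-\beta}_{\mathcal{F},\nu}$), by {\bf (A1)} (with $\int_0^\infty\mu_0^2<\infty$ and $\beta$ large enough that $\int_0^\infty e^{-2\beta t}\mu_0^2(t)\,dt<\infty$) the remainder is $e^{-\beta t}$-integrable, and by hypothesis $x(\cdot)\in L^{2,-\beta}_{\mathcal{F}}(0,\infty;\mathbb{R})$ handles the leading term; hence $\mathbb{E}\int_0^\infty e^{-\beta t}|\Phi(x_t)|\,dt<\infty$. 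Consequently the limit $L:=\lim_{T\to\infty}\mathbb{E}[e^{-\beta T}|x_T|^2]$ exists and is finite.

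It then remains to identify $L=0$. Set $\vartheta(t):=\mathbb{E}[e^{-\beta t}|x_t|^2]\ge 0$. The membership $x(\cdot)\in L^{2,-\beta}_{\mathcal{F}}(0,\infty;\mathbb{R})$ states precisely that $\int_0^\infty\vartheta(t)\,dt<\infty$, while the previous step gives $\lim_{t\to\infty}\vartheta(t)=L$. A nonnegative function that is integrable on $[0,\infty)$ and admits a limit at infinity must have that limit equal to $0$; otherwise $\vartheta(t)\ge L/2>0$ for all large $t$ would force $\int_0^\infty\vartheta(t)\,dt=\infty$, a contradiction. Hence $L=0$, which is the claimed asymptotic property.

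The main obstacle I anticipate lies in the rigorous bookkeeping of the first step: verifying that the jump-diffusion It\^o formula produces exactly the compensator term $\int_{\mathcal{E}}|l|^2\nu(de)$ rather than an integral against $N$, correctly accounting for the cross-variation of the two Brownian integrators $W$ and $\xi$, and --- most delicately --- justifying that the localized martingale terms vanish in expectation in the limit, for which the absolute-integrability estimate of the second step is exactly what is required. Once these technicalities are settled, the deduction $L=0$ is immediate.
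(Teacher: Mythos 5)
Your proof is correct. The paper actually states Lemma \ref{lem21} without proof (calling it ``a basic deduction''), but your argument is exactly the computation the paper itself performs immediately afterwards in the proof of Theorem \ref{the21}: apply It\^{o}'s formula to $e^{-\beta t}|x_t|^2$ under $\mathbb{P}$ (where, as you note, $W$ and $\xi$ are independent Brownian motions and $N$ keeps compensator $\nu(de)dt$, since the density $\mathcal{Z}$ is driven only by $d\xi$), localize, and use {\bf (A2)}--{\bf (A4)} together with $|\tilde{\sigma}|\leq\mu_0(t)$ and the boundedness of $h$ to control the extra drift $-e^{-\frac{\beta}{2}t}\tilde{\sigma}h$ in $\tilde{b}$, so that the $dt$-integrand is absolutely integrable against $e^{-\beta t}$ on $[0,\infty)$. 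The one genuinely delicate point you handled correctly is that the hypothesis $x(\cdot)\in L^{2,-\beta}_{\mathcal{F}}(0,\infty;\mathbb{R})$ by itself only yields $\liminf_{T\rightarrow\infty}\mathbb{E}\big[e^{-\beta T}|x_T|^2\big]=0$ along a subsequence; it is the It\^{o} identity with absolutely convergent right-hand side that guarantees $\vartheta(T):=\mathbb{E}\big[e^{-\beta T}|x_T|^2\big]$ has a limit at all, after which integrability of the nonnegative function $\vartheta$ forces that limit to be zero.
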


Then the existence and uniqueness result and related estimate of \eqref{ifsdep01} are given as follows.
\begin{theorem}\label{the21}
Under {\bf (A1)--(A4)} and assuming that $\beta>2\mu_1+L^2_\sigma+L^2_{\tilde{\sigma}}+L^2_l$, then \eqref{ifsdep01} admits a unique solution $x(\cdot)\in L^{2,-\beta}_{\mathcal{F}}(0,\infty;\mathbb{R})$. Moreover, for any $\epsilon>0$, we have the following estimate:
\begin{equation}\label{est1}
\begin{aligned}
&\big(\beta-2\mu_1-3\epsilon-L^2_{\sigma}-L^2_{\tilde{\sigma}}-L^2_l-\epsilon L^2_l\big)\mathbb{E}\int_0^\infty e^{-\beta t}|x_t|^2dt\\
&\leq x^2_0+\mathbb{E}\int_0^\infty e^{-\beta t}\bigg[\frac{1}{\epsilon}|\tilde{b}(0)|^2+\bigg(\frac{L^2_{\sigma}}{\epsilon}+1\bigg)|\sigma(0)|^2
 +\bigg(\frac{L^2_{\tilde{\sigma}}}{\epsilon}+1\bigg)|\tilde{\sigma}(0)|^2+\bigg(1+\frac{1}{\epsilon}\bigg)||l(0,e)||^2\bigg]dt.
\end{aligned}
\end{equation}
Meanwhile, let $x_1(\cdot),x_2(\cdot)\in L^{2,-\beta}_{\mathcal{F}}(0,\infty;\mathbb{R})$ be two solutions to \eqref{ifsdep01} with the coefficients $(\tilde{b}_1,\sigma_1,\tilde{\sigma}_1,l_1)$ and $(\tilde{b}_2,\sigma_2,\tilde{\sigma}_2,l_2)$ and initial values $x_1(0),x_2(0)$, respectively. Then, for any $\epsilon>0$, we also give the a priori estimate as follows:
\begin{equation}\label{est2}
\begin{aligned}
&\big(\beta-2\mu_1-3\epsilon-L^2_{\sigma}-L^2_{\tilde{\sigma}}-L^2_l-\epsilon L^2_l\big)\mathbb{E}\int_0^\infty e^{-\beta t}|x_1-x_2|^2dt\\
&\quad \leq |x_1(0)-x_2(0)|^2+\mathbb{E}\int_0^\infty e^{-\beta t}\bigg[\frac{1}{\epsilon}|\tilde{b}_1(x_2)-\tilde{b}_2(x_2)|^2+\bigg(\frac{L^2_{\sigma}}{\epsilon}+1\bigg)|\sigma_1(x_2)-\sigma_2(x_2)|^2\\
&\qquad\qquad +\bigg(\frac{L^2_{\tilde{\sigma}}}{\epsilon}+1\bigg)|\tilde{\sigma}_1(x_2)-\tilde{\sigma}_2(x_2)|^2+\bigg(1+\frac{1}{\epsilon}\bigg)||l_1(x_2,e)-l_2(x_2,e)||^2\bigg]dt.
\end{aligned}
\end{equation}
\end{theorem}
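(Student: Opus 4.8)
The plan is to extract both a priori estimates from a single application of It\^o's formula to $e^{-\beta t}|x_t|^2$, to construct the solution by a finite-horizon truncation that is stitched together using uniqueness, and to read off uniqueness directly from the stability estimate \eqref{est2}. Since \eqref{ifsdep01} is a forward equation, the genuine work is the weighted estimate and the passage to the infinite horizon rather than existence on finite intervals.

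First I would derive \eqref{est1}. Recall that under $\mathbb{P}$ the processes $W(\cdot)$ and $\xi(\cdot)$ are independent Brownian motions and $\tilde{N}$ is the compensated Poisson measure. Applying It\^o's formula to $e^{-\beta t}|x_t|^2$ (after a standard localization so that the $dW$, $d\xi$ and $\tilde{N}$ stochastic integrals are true martingales) and taking expectation annihilates those terms, leaving for each finite $T$
\begin{equation*}
\mathbb{E}\big[e^{-\beta T}|x_T|^2\big]+\beta\,\mathbb{E}\int_0^T e^{-\beta t}|x_t|^2dt=x_0^2+\mathbb{E}\int_0^T e^{-\beta t}\Big[2x_t\tilde{b}(x_t)+\sigma(x_t)^2+\tilde{\sigma}(x_t)^2+\|l(x_t,\cdot)\|^2\Big]dt.
\end{equation*}
I would then bound the integrand termwise: the monotonicity {\bf (A2)} applied to the reduced drift gives $2x_t\tilde{b}(x_t)\leq(2\mu_1+\epsilon)|x_t|^2+\frac{1}{\epsilon}|\tilde{b}(0)|^2$ after a Young split of the $\tilde{b}(0)$ contribution, while the Lipschitz bounds {\bf (A3)} on $\sigma,\tilde{\sigma},l$ combined with Young's inequality (parameters chosen as $\epsilon/L_\sigma^2$, $\epsilon/L_{\tilde{\sigma}}^2$, $\epsilon$ respectively) produce the coefficients $L_\sigma^2+\epsilon$, $L_{\tilde{\sigma}}^2+\epsilon$ and $L_l^2+\epsilon L_l^2$ on $|x_t|^2$ together with the fixed-point terms weighted by $\frac{L_\sigma^2}{\epsilon}+1$, $\frac{L_{\tilde{\sigma}}^2}{\epsilon}+1$ and $1+\frac{1}{\epsilon}$. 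Collecting the $|x_t|^2$ coefficients reproduces exactly $\beta-2\mu_1-3\epsilon-L_\sigma^2-L_{\tilde{\sigma}}^2-L_l^2-\epsilon L_l^2$; discarding the nonnegative term $\mathbb{E}[e^{-\beta T}|x_T|^2]$ and letting $T\to\infty$ by monotone convergence yields \eqref{est1}, the right-hand side being finite by {\bf (A4)} and by $\tilde{\sigma}(0)\leq\mu_0(t)$ with $\int_0^\infty\mu_0^2(t)dt<\infty$ from {\bf (A1)}.

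For \eqref{est2} I would run the identical computation on the difference $\delta x:=x_1-x_2$, which solves an SDEP with coefficients $\tilde{b}_1(x_1)-\tilde{b}_2(x_2)$, $\sigma_1(x_1)-\sigma_2(x_2)$, and so on. The decisive device is the splitting $\tilde{b}_1(x_1)-\tilde{b}_2(x_2)=[\tilde{b}_1(x_1)-\tilde{b}_1(x_2)]+[\tilde{b}_1(x_2)-\tilde{b}_2(x_2)]$: monotonicity of $\tilde{b}_1$ bounds the first bracket by $\mu_1|\delta x|^2$ and Young's inequality absorbs the coefficient-mismatch term $\tilde{b}_1(x_2)-\tilde{b}_2(x_2)$ into the right-hand side, with the analogous Lipschitz/Young treatment for the diffusion and jump brackets. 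This reproduces \eqref{est2} verbatim. Existence I would obtain by truncation: for each integer $n$ the coefficients are globally Lipschitz by {\bf (A1)}, {\bf (A3)}, so the classical finite-horizon theory for SDEPs yields a unique $x^n\in\mathcal{S}^{2}[0,n]$; finite-horizon uniqueness forces $x^{n+1}|_{[0,n]}=x^n$, so the $x^n$ patch into a single adapted process $x(\cdot)$ solving \eqref{ifsdep01}, and the $n$-uniform bound \eqref{est1} together with monotone convergence places $x(\cdot)\in L^{2,-\beta}_{\mathcal{F}}(0,\infty;\mathbb{R})$. Uniqueness then follows from \eqref{est2} with identical coefficients and equal initial data, where the right-hand side vanishes and, since $\beta>2\mu_1+L_\sigma^2+L_{\tilde{\sigma}}^2+L_l^2$, one may take $\epsilon$ small enough to keep the left-hand coefficient strictly positive, forcing $\mathbb{E}\int_0^\infty e^{-\beta t}|x_1-x_2|^2dt=0$.

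The main obstacle is the passage to the infinite horizon rather than any single computation: the truncated solutions a priori live only in $\mathcal{S}^2[0,n]$, and one must certify that the weighted integrability $\mathbb{E}\int_0^\infty e^{-\beta t}|x_t|^2dt<\infty$ genuinely survives the limit. This rests entirely on the strict sign condition $\beta>2\mu_1+L_\sigma^2+L_{\tilde{\sigma}}^2+L_l^2$, which (after choosing $\epsilon$ small) renders the $|x_t|^2$-coefficient in \eqref{est1} positive, and on the finiteness of the inhomogeneous terms on the whole half-line—where the nonstandard requirement $\int_0^\infty\mu_0^2(t)dt<\infty$ on $\tilde{\sigma}$, rather than a mere constant bound as in the finite-horizon setting, is precisely what keeps the $\tilde{\sigma}(0)$-contribution integrable over $[0,\infty)$.
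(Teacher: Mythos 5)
Your proposal is correct and follows essentially the same route as the paper: It\^{o}'s formula applied to $e^{-\beta t}|x_t|^2$ with the monotonicity condition {\bf (A2)}, the Lipschitz bounds {\bf (A3)} and Young splits reproducing exactly the coefficient $\beta-2\mu_1-3\epsilon-L^2_\sigma-L^2_{\tilde{\sigma}}-L^2_l-\epsilon L^2_l$, then $T\to\infty$, with \eqref{est2} obtained via the same drift splitting that the paper encodes by setting $\hat{\tilde{b}}(x)=\tilde{b}_1(x_2+x)-\tilde{b}_2(x_2)$ and re-applying \eqref{est1}. Your explicit truncation-and-patching construction of existence merely fills in detail the paper delegates to the classical finite-horizon theory and the references \cite{WY21}, \cite{Yu17}, so it is an elaboration rather than a different argument.
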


\begin{proof}
We can easily obtain the existence and uniqueness result of infinite horizon SDEP \eqref{ifsdep01} in $L^{2,-\beta}_{\mathcal{F}}(0,\infty;\mathbb{R})$ by the classical method for uniquely solvability of SDEP on finite horizon. Moreover, similar study on infinite horizon SDE (or SDEP) can also be seen in \cite{WY21} and \cite{Yu17}. Therefore, we mainly give the proof of the estimates \eqref{est1} and \eqref{est2}. For any $T>0$, applying It\^{o}'s formula to $e^{-\beta t}x^2_t$ on $[0,T]$, and by {\bf (A2)--(A3)} we get (we omit time variable $t$ in some places for simplicity)
\begin{equation*}
\begin{aligned}
&\mathbb{E}e^{-\beta T}x^2_T+\mathbb{E}\int_0^T\big(\beta-2\mu_1-3\epsilon-L^2_\sigma-L^2_{\tilde{\sigma}}-L^2_l-\epsilon L^2_l\big)e^{-\beta t}x^2_tdt\\
&\leq x^2_0+\mathbb{E}\int_0^T e^{-\beta t}\bigg[\frac{1}{\epsilon}|\tilde{b}(0)|^2+\bigg(\frac{L^2_{\sigma}}{\epsilon}+1\bigg)|\sigma(0)|^2
 +\bigg(\frac{L^2_{\tilde{\sigma}}}{\epsilon}+1\bigg)|\tilde{\sigma}(0)|^2+\bigg(1+\frac{1}{\epsilon}\bigg)||l(0,e)||^2\bigg]dt.
\end{aligned}
\end{equation*}
Let $T\rightarrow\infty$, we get the estimate \eqref{est1}. Then, due to $\beta>2\mu_1+L^2_\sigma+L^2_{\tilde{\sigma}}+L^2_l$ and by {\bf (A4)}, it is easy to check that $x(\cdot)\in L^{2,-\beta}_\mathcal{F}(0,\infty;\mathbb{R})$. Next, we set $\hat{x}(\cdot)=x_1(\cdot)-x_2(\cdot)$, and
\begin{equation*}
\begin{aligned}
&\hat{\tilde{b}}(x)=\tilde{b}_1(x_2+x)-\tilde{b}_2(x_2),\ \ \hat{\sigma}(x)=\sigma_1(x_2+x)-\sigma_2(x_2),\\
&\hat{\tilde{\sigma}}(x)=\tilde{\sigma}_1(x_2+x)-\tilde{\sigma}_2(x_2),\ \ \hat{l}(x,e)=l_1(x_2+x,e)-l_2(x_2,e).
\end{aligned}
\end{equation*}
Then, we get the equation of $\hat{x}(\cdot)$:
\begin{equation*}
\left\{
\begin{aligned}
d\hat{x}_t&=\hat{\tilde{b}}(\hat{x}_t)dt+\hat{\sigma}(\hat{x}_t)dW_t+\hat{\tilde{\sigma}}(\hat{x}_t)d\xi_t+\int_{\mathcal{E}}\hat{l}(\hat{x}_{t-},e)\tilde{N}(de,dt),\ t\in[0,\infty),\\
 \hat{x}_0&=x_1(0)-x_2(0).
\end{aligned}
\right.
\end{equation*}
And it is easy to verify that $\hat{\tilde{b}},\hat{\sigma},\hat{\tilde{\sigma}},\hat{l}$ satisfy {\bf (A1)--(A4)}, so we can directly get the a priori estimate \eqref{est2} by estimate \eqref{est1} above. The proof is complete.
\end{proof}

Moreover, we need the unique solvability of the following infinite horizon BSDEP:
\begin{equation}\label{ifbsdep01}
-dy_t=g(x_t,y_t,z_t,\tilde{z}_t,\gamma_{(t,e)})dt-z_tdW_t-\tilde{z}_td\xi_t-\int_{\mathcal{E}}\gamma_{(t,e)}\tilde{N}(de,dt),\ t\in[0,\infty),
\end{equation}
with the same coefficient $g$ as in \eqref{iffbsdep01}, which is coupled with the infinite horizon SDEP \eqref{ifsdep01}.

\begin{lemma}\label{lem22}
Assume that {\bf (A5)--(A8)} hold. Let $(y_1(\cdot),z_1(\cdot),\tilde{z}_1(\cdot),\gamma_1(\cdot,\cdot))$ and $(y_2(\cdot),z_2(\cdot),\tilde{z}_2(\cdot),\\\gamma_2(\cdot,\cdot))$ be solutions to \eqref{ifbsdep01} with generator $g=g^1$ and $g^2$, respectively, then, for any $\delta>0$, we have
\begin{equation}
\begin{aligned}
&\mathbb{E}\int_0^\infty e^{-\beta t}\bigg[\frac{1}{2}(|z_1-z_2|^2+|\tilde{z}_1-\tilde{z}_2|^2+||\gamma_1-\gamma_2||^2)\\
&\qquad\qquad\quad +\big(-\beta-2\mu_2-2K_1^2(t)-2K_2^2(t)-2K_3^2(t)-\delta\big)|y_1-y_2|^2\bigg]dt\\
&\quad \leq\frac{1}{\delta}\mathbb{E}\int_0^\infty e^{-\beta t}\big|g^1(x,y_2,z_2,\tilde{z}_2,\gamma_2)-g^2(x,y_2,z_2,\tilde{z}_2,\gamma_2)\big|^2dt.
\end{aligned}
\end{equation}
\end{lemma}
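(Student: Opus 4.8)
The plan is to reduce the statement to an a priori energy estimate for the difference process and then to apply It\^o's formula with the exponential weight $e^{-\beta t}$. First I would set $\hat{y}=y_1-y_2$, $\hat{z}=z_1-z_2$, $\hat{\tilde{z}}=\tilde{z}_1-\tilde{z}_2$ and $\hat{\gamma}=\gamma_1-\gamma_2$, so that $(\hat{y},\hat{z},\hat{\tilde{z}},\hat{\gamma})$ solves the linear BSDEP driven by $\hat{g}_t:=g^1(x,y_1,z_1,\tilde{z}_1,\gamma_1)-g^2(x,y_2,z_2,\tilde{z}_2,\gamma_2)$, namely $d\hat{y}_t=-\hat{g}_t\,dt+\hat{z}_t\,dW_t+\hat{\tilde{z}}_t\,d\xi_t+\int_{\mathcal{E}}\hat{\gamma}_{(t,e)}\tilde{N}(de,dt)$ under $\mathbb{P}$.

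Next I would apply It\^o's formula to $e^{-\beta t}|\hat{y}_t|^2$ on $[0,T]$ and take expectation. Since the solution components lie in the weighted spaces of Definition 2.1, the stochastic integrals against $W$, $\xi$ and $\tilde{N}$ are true martingales and drop out, leaving the identity
\[
\mathbb{E}\big[e^{-\beta T}|\hat{y}_T|^2\big]-\mathbb{E}|\hat{y}_0|^2=\mathbb{E}\int_0^T e^{-\beta t}\Big[-\beta|\hat{y}|^2-2\hat{y}\hat{g}+|\hat{z}|^2+|\hat{\tilde{z}}|^2+\|\hat{\gamma}\|^2\Big]dt,
\]
where $\|\hat{\gamma}\|^2:=\int_{\mathcal{E}}|\hat{\gamma}_{(t,e)}|^2\nu(de)$ is produced by the quadratic variation of the jump martingale.

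The core of the argument is to control the cross term $2\hat{y}\hat{g}$. I would split $\hat{g}$ through the intermediate values $g^1(x,y_2,z_1,\tilde{z}_1,\gamma_1)$ and $g^1(x,y_2,z_2,\tilde{z}_2,\gamma_2)$: the $y$-increment is bounded by the monotonicity assumption \textbf{(A6)}, giving $2\hat{y}[g^1(x,y_1,z_1,\tilde{z}_1,\gamma_1)-g^1(x,y_2,z_1,\tilde{z}_1,\gamma_1)]\le 2\mu_2|\hat{y}|^2$; the $(z,\tilde{z},\gamma)$-increment is bounded by the Lipschitz condition \textbf{(A7)} together with the weighted Young inequality $2K_i(t)|\hat{y}|\,\rho\le 2K_i^2(t)|\hat{y}|^2+\tfrac12\rho^2$ applied with $\rho\in\{|\hat{z}|,|\hat{\tilde{z}}|,\|\hat{\gamma}\|\}$, tuned so that the quadratic terms reappear precisely with coefficient $\tfrac12$; and the generator gap $\Delta g:=g^1(x,y_2,z_2,\tilde{z}_2,\gamma_2)-g^2(x,y_2,z_2,\tilde{z}_2,\gamma_2)$ is absorbed via $2\hat{y}\Delta g\le\delta|\hat{y}|^2+\tfrac{1}{\delta}|\Delta g|^2$. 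Substituting these bounds, moving $\tfrac12(|\hat{z}|^2+|\hat{\tilde{z}}|^2+\|\hat{\gamma}\|^2)$ to the left-hand side, collecting the coefficients of $|\hat{y}|^2$, and discarding the harmless term $-\mathbb{E}|\hat{y}_0|^2\le 0$ reproduces exactly the integrand on the left of the lemma, with right-hand side $\tfrac{1}{\delta}\mathbb{E}\int_0^T e^{-\beta t}|\Delta g|^2dt+\mathbb{E}[e^{-\beta T}|\hat{y}_T|^2]$.

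Finally I would pass to the limit $T\to\infty$, which I expect to be the main obstacle: in finite horizon one simply invokes the terminal condition, whereas here the boundary term $\mathbb{E}[e^{-\beta T}|\hat{y}_T|^2]$ must be shown to vanish. The genuinely infinite-horizon ingredient, in the spirit of Lemma \ref{lem21}, is that membership of $\hat{y}$ in $L^{2,-\beta}_{\mathcal{F}}(0,\infty;\mathbb{R})$ forces $\liminf_{T\to\infty}\mathbb{E}[e^{-\beta T}|\hat{y}_T|^2]=0$; I would therefore select a sequence $T_n\to\infty$ along which this boundary term tends to $0$. Along $T_n$ the left-hand integral converges by dominated convergence, because its integrand is absolutely integrable over $[0,\infty)$ (the coefficients $K_i^2(t)$ are bounded and each of $\hat{y},\hat{z},\hat{\tilde{z}},\hat{\gamma}$ lies in the weighted space), while the term $\tfrac{1}{\delta}\mathbb{E}\int_0^{T_n} e^{-\beta t}|\Delta g|^2dt$ converges by monotone convergence; if this last quantity is infinite the assertion is trivial. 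This yields the estimate for every $\delta>0$, and the only delicate points are the integrability needed for the martingale terms to vanish and for the limit passage, all of which follow from \textbf{(A5)}--\textbf{(A8)}.
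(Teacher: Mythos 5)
Your proposal is correct and follows essentially the same route as the paper: set up the difference equation, apply It\^{o}'s formula to $e^{-\beta t}|\hat{y}_t|^2$, bound the cross term via the monotonicity in \textbf{(A6)}, the Lipschitz bounds in \textbf{(A7)} with weighted Young inequalities calibrated to leave coefficient $\tfrac12$ on the quadratic terms, and a $\delta$-Young step for the generator gap, then let $T\to\infty$. The only (harmless) deviation is at the boundary term: the paper invokes Lemma \ref{lem21} ``similarly for $\hat{y}$'' to kill $\mathbb{E}[e^{-\beta T}|\hat{y}_T|^2]$, whereas you extract a sequence $T_n$ with $\mathbb{E}[e^{-\beta T_n}|\hat{y}_{T_n}|^2]\to0$ from $\liminf_{T\to\infty}\mathbb{E}[e^{-\beta T}|\hat{y}_T|^2]=0$, which is if anything the more careful justification of the same step.
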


\begin{proof}
We set $\hat{y}=y_1-y_2,\hat{z}=z_1-z_2,\hat{\tilde{z}}=\tilde{z}_1-\tilde{z}_2,\hat{\gamma}=\gamma_1-\gamma_2$. First, we have
\begin{equation}\label{haty}
\begin{aligned}
-d\hat{y}_t&=\big[g^1(x,y_1,z_1,\tilde{z}_1,\gamma_{(1,t,e)})-g^2(x,y_2,z_2,\tilde{z}_2,\gamma_{(2,t,e)})\big]dt\\
           &\quad-\hat{z}_tdW_t-\hat{\tilde{z}}_td\xi_t-\int_{\mathcal{E}}\hat{\gamma}_{t,e}\tilde{N}(de,dt),\ t\in[0,\infty].
\end{aligned}
\end{equation}
For any $T>0$, applying It\^{o}'s formula to $e^{-\beta t}|\hat{y}_t|^2$ on $[0,T]$, then we deduce that
\begin{equation*}
\begin{aligned}
&|\hat{y}_0|^2+\mathbb{E}\int_0^T e^{-\beta t}\big(-\beta|\hat{y}_t|^2+|\hat{z}|^2+|\hat{\tilde{z}}|^2+||\hat{\gamma}||^2\big)dt=\mathbb{E}\big[e^{-\beta T}|\hat{y}_T|^2\big]\\
&\quad +\mathbb{E}\int_0^Te^{-\beta t}\Big\{2|\hat{y}_t|\big[g^1(x,y_1,z_1,\tilde{z}_1,\gamma_{(1,t,e)})-g^2(x,y_2,z_2,\tilde{z}_2,\gamma_{(2,t,e)})\big]\Big\}dt\\
&\leq \mathbb{E}\big[e^{-\beta T}|\hat{y}_T|^2\big]+\mathbb{E}\int_0^Te^{-\beta t}\Big\{\big(2\mu_2+2K_1^2(t)+2K_2^2(t)+2K_3^2(t)+\delta\big)|\hat{y}_t|^2+\frac{1}{2}|\hat{z}|^2\\
&\quad +\frac{1}{2}|\hat{\tilde{z}}|^2+\frac{1}{2}||\hat{\gamma}||^2+\frac{1}{\delta}\big|g^1(x,y_2,z_2,\tilde{z}_2,\gamma_{(2,t,e)})-g^2(x,y_2,z_2,\tilde{z}_2,\gamma_{(2,t,e)})\big|^2\Big\}dt.
\end{aligned}
\end{equation*}
Furthermore, we have
\begin{equation*}
\begin{aligned}
&\mathbb{E}\int_0^T e^{-\beta t}\Big[\big(-\beta-2\mu_2-2K_1^2(t)-2K_2^2(t)-2K_3^2(t)-\delta\big)|\hat{y}_t|^2+\frac{1}{2}|\hat{z}|^2+\frac{1}{2}|\hat{\tilde{z}}|^2+\frac{1}{2}||\hat{\gamma}||^2\Big]dt\\
&\leq \mathbb{E}\big[e^{-\beta T}|\hat{y}_T|^2\big]+\frac{1}{\delta}\mathbb{E}\int_0^Te^{-\beta t}\big|g^1(x,y_2,z_2,\tilde{z}_2,\gamma_{(2,t,e)})-g^2(x,y_2,z_2,\tilde{z}_2,\gamma_{(2,t,e)})\big|^2dt.
\end{aligned}
\end{equation*}
Because $\hat{y}(\cdot)$ is the unique solution to \eqref{haty}, then taking $T\rightarrow\infty$ and considering Lemma \ref{lem21} similarly for $\hat{y}(\cdot)$, we get
\begin{equation*}
\begin{aligned}
&\mathbb{E}\int_0^\infty e^{-\beta t}\Big[\big(-\beta-2\mu_2-2K_1^2(t)-2K_2^2(t)-2K_3^2(t)-\delta\big)|\hat{y}_t|^2+\frac{1}{2}|\hat{z}|^2+\frac{1}{2}|\hat{\tilde{z}}|^2+\frac{1}{2}||\hat{\gamma}||^2\Big]dt\\
&\leq \frac{1}{\delta}\mathbb{E}\int_0^\infty e^{-\beta t}\big|g^1(x,y_2,z_2,\tilde{z}_2,\gamma_{(2,t,e)})-g^2(x,y_2,z_2,\tilde{z}_2,\gamma_{(2,t,e)})\big|^2dt,
\end{aligned}
\end{equation*}
where we take $-\beta-2\mu_2-2K_1^2(t)-2K_2^2(t)-2K_3^2(t)-\delta>0$. The proof is complete.
\end{proof}

Next, we begin to prove the existence and uniqueness of infinite horizon BSDEP \eqref{ifbsdep01}.
\begin{theorem}\label{the22}
Under Assumption {\bf (A5)--(A8)}, \eqref{ifbsdep01} admits a unique solution $(y(\cdot),z(\cdot),\tilde{z}(\cdot),\\\gamma(\cdot,\cdot))$ in $L^{2,-\beta}_{\mathcal{F}}(0,\infty;\mathbb{R}^3)\times L^{2,-\beta}_{\mathcal{F},\nu}(0,\infty;\mathbb{R})$.
\end{theorem}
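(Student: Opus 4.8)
The plan is to split the statement into uniqueness, which follows almost immediately from the a priori estimate in Lemma \ref{lem22}, and existence, which I would build by a finite-horizon truncation argument, treating the ``small space'' ($\beta<0$) and ``big space'' ($\beta>0$) regimes separately as announced in the introduction. For uniqueness, suppose $(y_1,z_1,\tilde z_1,\gamma_1)$ and $(y_2,z_2,\tilde z_2,\gamma_2)$ are two solutions in the stated space driven by the same generator $g$. Applying Lemma \ref{lem22} with $g^1=g^2=g$ kills the right-hand side, so
\begin{equation*}
\mathbb{E}\int_0^\infty e^{-\beta t}\Big[\big(-\beta-2\mu_2-2K_1^2(t)-2K_2^2(t)-2K_3^2(t)-\delta\big)|y_1-y_2|^2+\tfrac12|z_1-z_2|^2+\tfrac12|\tilde z_1-\tilde z_2|^2+\tfrac12||\gamma_1-\gamma_2||^2\Big]dt\le 0.
\end{equation*}
By \textbf{(A8)} one chooses $\delta>0$ small enough that the coefficient of $|y_1-y_2|^2$ is strictly positive, which forces $y_1=y_2$, $z_1=z_2$, $\tilde z_1=\tilde z_2$ and $\gamma_1=\gamma_2$, giving uniqueness.

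For existence, the scheme is to solve the auxiliary finite-horizon BSDEPs on $[0,n]$ with zero terminal value $y^{(n)}_n=0$; each is uniquely solvable by the classical theory for finite-horizon BSDEPs with monotone Lipschitz generators, since \textbf{(A5)}--\textbf{(A7)} supply precisely the monotonicity in $y$ and the Lipschitz bounds in $(z,\tilde z,\gamma,u)$, while the integrability of $g(x,0,0,0,0,u)$ in \textbf{(A8)} controls the source term. Extending $(y^{(n)},z^{(n)},\tilde z^{(n)},\gamma^{(n)})$ by zero on $[n,\infty)$, I would run the same It\^o computation underlying Lemma \ref{lem22} on the difference $\hat y=y^{(n+m)}-y^{(n)}$ over $[0,n]$: the two processes obey the same equation there, so the only discrepancy enters through the terminal mismatch $\mathbb{E}\big[e^{-\beta n}|y^{(n+m)}_n|^2\big]$ at time $n$, and the dissipativity in \textbf{(A8)} bounds this by a tail $\mathbb{E}\int_n^\infty e^{-\beta t}(\cdots)\,dt$. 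Since $\int_0^\infty K_i^2(t)\,dt<\infty$ and $g(x,0,0,0,0,u)$ is square-integrable against $e^{-\beta_0 t}$, this tail tends to $0$, so the truncated family is Cauchy in $L^{2,-\beta}_{\mathcal{F}}(0,\infty;\mathbb{R}^3)\times L^{2,-\beta}_{\mathcal{F},\nu}(0,\infty;\mathbb{R})$. Completeness then yields a limit $(y,z,\tilde z,\gamma)$, and passing to the limit in the integral form of \eqref{ifbsdep01} identifies it as a solution; the backward analogue of Lemma \ref{lem21}, namely $\lim_{T\to\infty}\mathbb{E}[e^{-\beta T}|y_T|^2]=0$, must be verified so that the boundary term at infinity may legitimately be dropped throughout.

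The delicate point, and the reason for the two-stage argument, is the uniform control of the terminal contribution $\mathbb{E}[e^{-\beta T}|y_T|^2]$. When $\beta<0$ the weight $e^{-\beta t}$ grows, the space is small, and the two truncation schemes referenced in the introduction approximate \eqref{ifbsdep01} with the mismatch term manifestly vanishing, so the Cauchy argument above closes directly. When $\beta>0$ the weight decays and this direct control is lost; one then passes to the weakened notion of solution (Definition \ref{definition22}) that relaxes the sign requirement on $\beta$, under which the finite-horizon approximants can be reinterpreted so that the same a priori estimate still applies. The main obstacle is therefore not the algebra of the It\^o estimate, which is already packaged in Lemma \ref{lem22}, but rather verifying the vanishing of the terminal term and, in the $\beta>0$ regime, reconciling the enlarged solution space with convergence of the truncations; the strict inequality in \textbf{(A8)} and the integrability conditions in \textbf{(A7)} are exactly what make both closures possible.
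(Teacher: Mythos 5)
Your proposal is correct and matches the paper's own proof in all essentials: uniqueness follows from Lemma \ref{lem22} applied with $g^1=g^2$ together with {\bf (A8)}, and existence is built exactly as in the paper, via finite-horizon truncated BSDEPs on $[0,n]$ with zero terminal value (solvable by the classical finite-horizon theory), extension by zero on $(n,\infty)$, a Cauchy argument driven by the same a priori estimate, passage to the limit, and the two-stage handling of the weight — first $\beta<0$, then $\beta>0$ through the relaxed solution notion of Definition \ref{definition22}. The only cosmetic difference is bookkeeping: the paper keeps all approximants on $[0,\infty)$ with truncated sources $\varphi_n(t)=\mathbb{I}_{[0,n]}(t)g(x_t,0,0,0,0)$ and applies Lemma \ref{lem22} to the source difference, whereas you run the It\^{o} estimate on $[0,n]$ and absorb the terminal mismatch $\mathbb{E}\big[e^{-\beta n}|y^{(n+m)}_n|^2\big]$ into a vanishing tail integral — the two computations are equivalent.
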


\begin{proof}
{\bf Existence.} For $n=1,2,3,\cdots$, define $\varphi_n(t)=\mathbb{I}_{[0,n]}(t)g(x_t,0,0,0,0)$. First, \eqref{ifbsdep01} can be rewritten as follows:
\begin{equation}
\begin{aligned}
-dy_t&=\big[g(x_t,y_t,z_t,\tilde{z}_t,\gamma_{(t,e)})-g(x_t,0,0,0,0)+g(x_t,0,0,0,0)\big]dt\\
     &\quad -z_tdW_t-\tilde{z}_td\xi_t-\int_{\mathcal{E}}\gamma_{(t,e)}\tilde{N}(de,dt)\\
     &:=\big[G(y_t,z_t,\tilde{z}_t,\gamma_{(t,e)})+\varphi(t)\big]dt-z_tdW_t-\tilde{z}_td\xi_t-\int_{\mathcal{E}}\gamma_{(t,e)}\tilde{N}(de,dt).\\
\end{aligned}
\end{equation}
Obviously, $G(\cdot,\cdot,\cdot,\cdot)$ is Lipschitzian with respect to $(y,z,\tilde{z},\gamma)$ and monotonicity condition in $y$ with the same Lipschitz coefficients and $\mu_2$ as $g$, and the sequence $\{\varphi_n\}_{n=1}^\infty$ converges to $\varphi(t)$ in $L^{2,-\beta}_{\mathcal{F}}(0,\infty;\mathbb{R})$. For each $n$, let $(y_n(\cdot),z_n(\cdot),\tilde{z}_n(\cdot),\gamma_n(\cdot,\cdot))\in L^{2,-\beta}_{\mathcal{F}}(0,\infty;\mathbb{R}^3)\times L^{2,-\beta}_{\mathcal{F},\nu}(0,\infty;\mathbb{R})$ be the unique adapted solution to the following finite horizon BSDEP:
\begin{equation}\label{fhbsdep01}
\left\{
\begin{aligned}
-dy_{(n,t)}&=\bigg[G\big(y_{(n,t)},z_{(n,t)},\tilde{z}_{(n,t)},\gamma_{(n,t,e)}\big)+\varphi(t)\bigg]dt-z_{(n,t)}dW_t-\tilde{z}_{(n,t)}d\xi_t\\
           &\quad-\int_{\mathcal{E}}\gamma_{(n,t,e)}\tilde{N}(de,dt),\ t\in[0,n],\\
  y_{(n,n)}&=0,
\end{aligned}
\right.
\end{equation}
which admits unique solution by Proposition A.2 in Quenez and Sulem \cite{QS13}. And due to $G(0,0,0,0)=0$, it is easy to check that $(y_{(n,t)},z_{(n,t)},\tilde{z}_{(n,t)},\gamma_{(n,t,e)})=(0,0,0,0)$ when $t\in(n,\infty)$.
Therefore, $(y_n(\cdot),z_n(\cdot),\tilde{z}_n(\cdot),\gamma_n(\cdot,\cdot))$ satisfies
\begin{equation}
\begin{aligned}
-dy_{(n,t)}&=\bigg[G\big(y_{(n,t)},z_{(n,t)},\tilde{z}_{(n,t)},\gamma_{(n,t,e)}\big)+\varphi_n(t)\bigg]dt-z_{(n,t)}dW_t-\tilde{z}_{(n,t)}d\xi_t\\
&\quad-\int_{\mathcal{E}}\gamma_{(n,t,e)}\tilde{N}(de,dt),\ t\in[0,\infty).
\end{aligned}
\end{equation}
It follows from the estimation in Lemma \ref{lem22} that $(y_n(\cdot),z_n(\cdot),\tilde{z}_n(\cdot),\gamma_n(\cdot,\cdot))$ is a Cauchy sequence in $L^{2,-\beta}_{\mathcal{F}}(0,\infty;\mathbb{R}^3)\times L^{2,-\beta}_{\mathcal{F},\nu}(0,\infty;\mathbb{R})$. Then it is easy to check that the limit $(y(\cdot),z(\cdot),\tilde{z}(\cdot),\gamma(\cdot,\cdot))\in L^{2,-\beta}_{\mathcal{F}}(0,\infty;\mathbb{R}^3)\times L^{2,-\beta}_{\mathcal{F},\nu}(0,\infty;\mathbb{R})$ is a solution to infinite horizon BSDEP \eqref{ifbsdep01}.
Indeed, when considering $\beta<0$, we deduce that
{\small\begin{equation}\label{E1}
\mathbb{E}\bigg(\int_t^\infty z_ndW_s-\int_t^\infty zdW_s\bigg)^2=\mathbb{E}\int_t^\infty |z_n-z|^2ds\leq \mathbb{E}\int_t^\infty e^{-\beta s}|z_n-z|^2ds\rightarrow0,\ \text{as}\ n\rightarrow\infty.
\end{equation}}
Similarly,
$\mathbb{E}\big[\int_t^\infty \tilde{z}_nd\xi_s-\int_t^\infty \tilde{z}d\xi_s\big]^2\rightarrow0,\ \text{as}\ n\rightarrow\infty$,
\begin{equation}\label{E3}
\begin{aligned}
&\mathbb{E}\bigg(\int_t^\infty\int_{\mathcal{E}} \gamma_{(n,s,e)}\tilde{N}(de,ds)-\int_t^\infty\int_{\mathcal{E}} \gamma_{(s,e)}\tilde{N}(de,ds)\bigg)^2
=\mathbb{E}\int_t^\infty\int_{\mathcal{E}}|\gamma_{(n,s,e)}-\gamma_{(s,e)}|^2\nu(de)ds\\
&\leq \mathbb{E}\int_t^\infty\int_{\mathcal{E}} e^{-\beta s}|\gamma_{(n,s,e)}-\gamma_{(s,e)}|^2\nu(de)ds\rightarrow0,\ \text{as}\ n\rightarrow\infty,
\end{aligned}
\end{equation}
and for any $\beta<0$, we have
\begin{equation*}
\begin{aligned}
&\mathbb{E}\bigg[\int_t^\infty G(y_n,z_n,\tilde{z}_n,\gamma_n)-G(y,z,\tilde{z},\gamma)ds\bigg]^2\\
&\leq \mathbb{E}\bigg[\int_t^\infty |G(y_n,z_n,\tilde{z}_n,\gamma_n)-G(y,z,\tilde{z},\gamma)|e^{-\frac{\beta}{2}s}e^{\frac{\beta}{2}s}ds\bigg]^2\\
&\leq \mathbb{E}\bigg[\int_t^\infty \Big|e^{-\frac{K_0}{2}s}|y_n-y|+e^{-\frac{K_1}{2}s}|z_n-z|+e^{-\frac{K_2}{2}s}|\tilde{z}_n-\tilde{z}|\\
&\qquad\qquad +e^{-\frac{K_3}{2}s}||\gamma_n-\gamma||\Big|^2e^{-\beta s}ds\int_t^\infty e^{\beta s}ds\bigg]\\
\end{aligned}
\end{equation*}
\begin{equation}\label{E4}
\begin{aligned}
&\leq C\mathbb{E}\int_0^\infty \big(|y_n-y|^2+|z_n-z|^2+|\tilde{z}_n-\tilde{z}|^2+||\gamma_n-\gamma||^2\big)e^{-\beta s}ds\rightarrow0,\ \text{as}\ n\rightarrow\infty,
\end{aligned}
\end{equation}
and
$\mathbb{E}\big[\int_t^\infty \varphi_n(s)-\varphi(s)ds\big]^2\rightarrow0,\ \text{as}\ n\rightarrow\infty$.

Finally, due to $\mathbb{E}\int_0^\infty|y_n-y|^2e^{-\beta s}ds\rightarrow0$, we have $\mathbb{E}\big[|y_n-y|^2\big]\leq \mathbb{E}\big[|y_n-y|^2e^{-\beta s}\big]\rightarrow0\ \text{as}\ n\rightarrow\infty$. Thus, $\int_t^\infty z_{(n,s)}dW_s,\int_t^\infty \tilde{z}_{(n,s)}d\xi_s,\int_t^\infty\int_{\mathcal{E}} \gamma_{(n,s,e)}\tilde{N}(de,ds),\int_t^\infty\varphi_n(s)ds,\int_t^\infty G(y_n,z_n,\tilde{z}_n,\gamma_n)ds$ all converge to $\int_t^\infty z_sdW_s,\int_t^\infty \tilde{z}_sd\xi_s,\int_t^\infty\int_{\mathcal{E}} \gamma_{(s,e)}\tilde{N}(de,ds),\int_t^\infty\varphi(s)ds,\int_t^\infty G(y,z,\tilde{z},\gamma)ds$ in $L^2(\Omega,\\\mathcal{F},\mathbb{P};\mathbb{R})$, respectively. From the above discussion, the existence of solution is proved in a smaller space $L^{2,-\beta}_{\mathcal{F}}(0,\infty;\mathbb{R}^3)\times L^{2,-\beta}_{\mathcal{F},\nu}(0,\infty;\mathbb{R})(\beta<0)$, which can guarantee the existence of solution in $L^{2,-\beta}_{\mathcal{F}}(0,\infty;\mathbb{R}^3)\times L^{2,-\beta}_{\mathcal{F},\nu}(0,\infty;\mathbb{R})(\beta>0)$.

{\bf Uniqueness.} It is immediate from Lemma \ref{lem22}. The proof is complete.
\end{proof}

\begin{remark}\label{rem25}
In the discussion of the existence of solutions, we mainly approximate the infinite horizon BSDE \eqref{ifbsdep01} by a truncated equation with a zero value terminal condition. In fact, in our setting, the terminal value at infinity of infinite horizon BSDE is not given, that is, a $\mathcal{F}_{\infty}$-measurable random variable $\zeta$ is not known. However, if we consider a finite truncated time interval $[0,n]$, and define the terminal value at $n$ by a form of conditional expectation, i.e, $\mathbb{E}[\zeta|\mathcal{F}_n]$, then we can show that the value at $n$ is known in the finite horizon. In other words, although we have no information about the value at infinity, we can know the estimate for $\xi$ at $n$ based on the informations at fixed finite interval $[0,n]$. Therefore, we may try another way, inspired by Darling and Pardoux \cite{DP97}, to find a solution to \eqref{ifbsdep01} by the following equation:
\begin{equation}\label{fhbsdep02}
\left\{
\begin{aligned}
-d\hat{y}^n_t&=g\big(x_t,\hat{y}^n_t,\hat{z}^n_t,\hat{\tilde{z}}^n_t,\hat{\gamma}^n_{(t,e)}\big)dt-\hat{z}^n_tdW_t-\hat{\tilde{z}}^n_td\xi_t-\int_{\mathcal{E}}\hat{\gamma}^n_{(t,e)}\tilde{N}(de,dt),\ t\in[0,n],\\
  \hat{y}_n^n&=\zeta_n:=\mathbb{E}[\zeta|\mathcal{F}_n],
\end{aligned}
\right.
\end{equation}
where $\zeta$ is an $\mathbb{R}$-valued $\mathcal{F}_{\infty}$-measurable random variable with $\mathbb{E}|\zeta|^2<\infty$. For interval $(n,\infty)$, we denote $\hat{y}^n_t,\hat{z}^n_t,\hat{\tilde{z}}^n_t$ and $\hat{\gamma}^n_t$ by
\begin{equation}\label{00}
\begin{aligned}
\hat{y}^n_t=\zeta_t=\mathbb{E}[\zeta|\mathcal{F}_t],\quad \hat{z}^n_t=\hat{\tilde{z}}^n_t=\hat{\gamma}^n_{(t,e)}=g(x,\hat{y}^n,\hat{z}^n,\hat{\tilde{z}}^n,\hat{\gamma}^n)=0.
\end{aligned}
\end{equation}
Applying It\^{o}'s formula, we have
\begin{equation*}
\begin{aligned}
&e^{-\frac{\beta}{2} m}\zeta_m-e^{-\frac{\beta}{2} n}\zeta_n-e^{-\frac{\beta}{2} t}(\hat{y}^m_t-\hat{y}^n_t)
 =\int_t^m-\frac{\beta}{2} e^{-\frac{\beta}{2} s}\hat{y}^m_sds-\int_t^n-\frac{\beta}{2} e^{-\frac{\beta}{2} s}\hat{y}^n_sds\\
&\quad-\int_t^me^{-\frac{\beta}{2} s}g(x,\hat{y}^m,\hat{z}^m,\hat{\tilde{z}}^m,\hat{\gamma}^m)ds+\int_t^ne^{-\frac{\beta}{2}s}g(x,\hat{y}^n,\hat{z}^n,\hat{\tilde{z}}^n,\hat{\gamma}^n)ds\\
&\quad+\int_t^me^{-\frac{\beta}{2}s}\hat{z}^m_sdW_s-\int_t^ne^{-\frac{\beta}{2}s}\hat{z}^n_sdW_s+\int_t^me^{-\frac{\beta}{2}s}\hat{\tilde{z}}^m_sd\xi_s-\int_t^ne^{-\frac{\beta}{2}s}\hat{\tilde{z}}^n_sd\xi_s\\
&\quad+\int_t^me^{-\frac{\beta}{2}s}\int_{\mathcal{E}}\hat{\gamma}^m_{(s,e)}\tilde{N}(de,ds)-\int_t^ne^{-\frac{\beta}{2} s}\int_\mathcal{E}\hat{\gamma}^n_{(s,e)}\tilde{N}(de,ds).
\end{aligned}
\end{equation*}
By \eqref{00}, it is equivalent to the following
\begin{equation*}
\begin{aligned}
&e^{-\frac{\beta}{2} m}\zeta_m-e^{-\frac{\beta}{2} n}\zeta_n-e^{-\frac{\beta}{2} t}\Delta\hat{y}_t=\int_t^{m\vee n}-\frac{\beta}{2} e^{-\frac{\beta}{2} s}\Delta\hat{y}_sds\\
&\quad-\int_t^{m\vee n}e^{-\frac{\beta}{2} s}\bigg[g(x,\hat{y}^m,\hat{z}^m,\hat{\tilde{z}}^m,\hat{\gamma}^m)
 -g(x,\hat{y}^n,\hat{z}^n,\hat{\tilde{z}}^n,\hat{\gamma}^n)\bigg]ds\\
&\quad+\int_t^{m\vee n}e^{-\frac{\beta}{2}s}\Delta\hat{z}_sdW_s+\int_t^{m\vee n}e^{-\frac{\beta}{2}s}\Delta\hat{\tilde{z}}_sd\xi_s+\int_t^{m\vee n}e^{-\frac{\beta}{2}s}\int_{\mathcal{E}}\Delta\hat{\gamma}\tilde{N}(de,ds),
\end{aligned}
\end{equation*}
where $\Delta\hat{y}_t=\hat{y}^m_t-\hat{y}^n_t, \Delta\hat{z}_t=\hat{z}^m_t-\hat{z}^n_t, \Delta\hat{\tilde{z}}_t=\hat{\tilde{z}}^m_t-\hat{\tilde{z}}^n_t, \Delta\hat{\gamma}_{(t,e)}=\hat{\gamma}^m_{(t,e)}-\hat{\gamma}^n_{(t,e)}$. Applying It\^{o}'s formula to $|e^{-\frac{\beta}{2}t}(\hat{y}^m_t-\hat{y}^n_t)|^2$ on $[t,m\vee n]$, and similarly,
\begin{equation*}
\begin{aligned}
&\mathbb{E}\big[e^{-\beta t}|\Delta\hat{y}_t|^2\big]+\mathbb{E}\int_t^{m\vee n}e^{-\beta s}\Big[(-\beta-2\mu_2-2K_1^2(t)-2K_2^2(t)-2K_3^2(t)) |\Delta\hat{y}_s|^2\\
&\quad+\frac{1}{2}(|\Delta\hat{z}|^2+|\Delta\hat{\tilde{z}}|^2+||\Delta\hat{\gamma}||^2)\Big]ds\leq \mathbb{E}|e^{-\frac{\beta}{2}m}\zeta_m-e^{-\frac{\beta}{2}n}\zeta_n|^2.
\end{aligned}
\end{equation*}
Taking $sup$ on both sides, we obtain
\begin{equation*}
\begin{aligned}
&\sup_{t\in\mathbb{R}_+}\mathbb{E}\big[e^{-\beta t}|\Delta\hat{y}_t|^2\big]+\mathbb{E}\int_0^\infty\mathbb{I}_{[0,m\vee n]}e^{-\beta s}\Big[(-\beta-2\mu_2-2K_1^2(t)-2K_2^2(t)-2K_3^2(t)) |\Delta\hat{y}_s|^2\\
&\quad+\frac{1}{2}(|\Delta\hat{z}|^2+|\Delta\hat{\tilde{z}}|^2+||\Delta\hat{\gamma}||^2)\Big]ds\leq \mathbb{E}|e^{-\frac{\beta}{2}m}\zeta_m-e^{-\frac{\beta}{2}n}\zeta_n|^2,
\end{aligned}
\end{equation*}
for $\beta<-2\mu_2-2K_1^2(t)-2K_2^2(t)-2K_3^2(t)$, where we have used the fact by \eqref{00} that
\begin{equation*}
\sup_{t\in\mathbb{R}_+}\mathbb{E}\big[e^{-\beta t}|\Delta\hat{y}_t|^2\big]\leq\sup_{0\leq t\leq m\vee n}\mathbb{E}\big[e^{-\beta t}|\Delta\hat{y}_t|^2\big]
+\sup_{m\vee n<t}\mathbb{E}\big[e^{-\beta t}|\Delta\hat{y}_t|^2\big]=\sup_{0\leq t\leq m\vee n}\mathbb{E}\big[e^{-\beta t}|\Delta\hat{y}_t|^2\big].
\end{equation*}
And by Jensen's inequality, we have
\begin{equation*}
\begin{aligned}
&\mathbb{E}|e^{-\frac{\beta}{2}m}\zeta_m-e^{-\frac{\beta}{2}n}\zeta_n|^2\leq \mathbb{E}e^{-\beta m}|\zeta_m|^2+\mathbb{E}e^{-\beta n}|\zeta_n|^2\\
&\leq e^{-\beta m}\mathbb{E}|\mathbb{E}[\zeta|\mathcal{F}_m]|^2+e^{-\beta n}\mathbb{E}|\mathbb{E}[\zeta|\mathcal{F}_n]|^2
\leq (e^{-\beta m}+e^{-\beta n})\mathbb{E}|\zeta|^2\rightarrow0,\ \text{as}\ m,n\rightarrow\infty.
\end{aligned}
\end{equation*}
It implies that there exists Cauchy sequence $(\hat{y}^n,\hat{z}^n,\hat{\tilde{z}}^n,\hat{\gamma}^n)$ and it is not hard to obtain that its limiting process $(y,z,\tilde{z},\gamma)$ solve \eqref{ifbsdep01}. The uniqueness is guaranteed by Lemma \ref{lem22}.
\end{remark}

Combining Theorem \ref{the22} and Remark \ref{rem25}, we find both kinds of truncated BSDEPs can approximate \eqref{ifbsdep01} well with zero terminal and given random variable $\zeta_n$ at $n$, respectively. This fact is also mentioned in \cite{MV14}, where \eqref{fhbsdep01} or \eqref{fhbsdep02} can be chosen to obtain the same solution process for \eqref{ifbsdep01}. An intuitive explanation is obvious in the following property with random jump (see also similar Lemma 2, \cite{MV14} without jump).

\begin{lemma}\label{lemma23}
Under {\bf (A5)--(A8)}, let $\zeta$ be an $\mathbb{R}$-valued $\mathcal{F}_{\infty}$-measurable random variable with $\mathbb{E}|\zeta|^2<\infty$. We consider, for each fixed $n\in\mathbb{N}$, the two BSDEs \eqref{fhbsdep01} and \eqref{fhbsdep02}, where $y_{(n,t)}=0$ on $(n,\infty)$ mentioned in the proof of Theorem \ref{the22}, besides, we set $\hat{y}^n_t=\zeta_t=\mathbb{E}[\zeta|\mathcal{F}_t],\ \hat{z}^n_t=\hat{\tilde{z}}^n_t=\hat{\gamma}^n_{(t,e)}=g(x,\hat{y}^n,\hat{z}^n,\hat{\tilde{z}}^n,\hat{\gamma}^n)=0$, for any $t>n$. Then we have
\begin{equation}\label{MV2014}
\begin{aligned}
\lim_{n\rightarrow\infty}\bigg[&\sup_{t\in\mathbb{R_+}}\mathbb{E}e^{-\beta t}|y_{(n,t)}-\hat{y}^n_t|^2+\mathbb{E}\int_0^\infty\mathbb{I}_{[0,n]}e^{-\beta t}\Big(|y_{(n,t)}-\hat{y}^n_t|^2+|z_{(n,t)}-\hat{z}^n_t|^2\\
&\quad+|\tilde{z}_{(n,t)}-\hat{\tilde{z}}^n_t|^2+||\gamma_{(n,t)}-\hat{\gamma}^n_t||^2\Big)dt\bigg]=0.
\end{aligned}
\end{equation}
\end{lemma}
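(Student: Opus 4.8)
The plan is to exploit the fact that the two truncated equations \eqref{fhbsdep01} and \eqref{fhbsdep02} carry \emph{the same generator} $g$ on $[0,n]$ and differ only in their terminal data at $t=n$: namely $y_{(n,n)}=0$ against $\hat{y}^n_n=\zeta_n=\mathbb{E}[\zeta|\mathcal{F}_n]$. Indeed, since $\varphi_n\equiv\varphi$ on $[0,n]$, the driver $G(\cdot)+\varphi_n(t)$ in \eqref{fhbsdep01} is exactly $g(x,\cdot)$, so both processes solve the \emph{same} BSDEP on $[0,n]$ with distinct terminal conditions. Consequently their difference solves a BSDEP driven by the increment of $g$ along the two trajectories, with terminal value $-\zeta_n$, and the whole statement reduces to a single energy estimate of the type already established in Lemma \ref{lem22} and in the computation of Remark \ref{rem25}.

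First I would set $\Delta y_t:=y_{(n,t)}-\hat{y}^n_t$ and, analogously, $\Delta z_t,\Delta\tilde{z}_t,\Delta\gamma_{(t,e)}$ on $[0,n]$. Applying It\^{o}'s formula to $e^{-\beta t}|\Delta y_t|^2$ on $[t,n]$ and absorbing the cross terms via the monotonicity {\bf (A6)} together with the Lipschitz bounds {\bf (A7)} and Young's inequality (with parameter $\delta$), exactly as in Remark \ref{rem25}, I obtain
\begin{equation*}
\begin{aligned}
&\mathbb{E}\big[e^{-\beta t}|\Delta y_t|^2\big]+\mathbb{E}\int_t^n e^{-\beta s}\Big[\big(-\beta-2\mu_2-2K_1^2(s)-2K_2^2(s)-2K_3^2(s)-\delta\big)|\Delta y_s|^2\\
&\qquad +\tfrac12\big(|\Delta z|^2+|\Delta\tilde{z}|^2+\|\Delta\gamma\|^2\big)\Big]ds\leq\mathbb{E}\big[e^{-\beta n}|\zeta_n|^2\big].
\end{aligned}
\end{equation*}
By {\bf (A8)} I fix $\delta>0$ so small that the coefficient of $|\Delta y_s|^2$ stays positive; a positive lower bound on all four coefficients then lets me dominate the entire integrand of the $\mathbb{I}_{[0,n]}$-term in \eqref{MV2014} by a constant multiple of the bracket above. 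Taking the supremum over $t\in[0,n]$ thus controls both the running supremum over $[0,n]$ and the $\mathbb{I}_{[0,n]}$-integral in \eqref{MV2014} by the single right-hand side $\mathbb{E}[e^{-\beta n}|\zeta_n|^2]$.

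Next I would treat the tail $t>n$ appearing in the supremum. By construction \eqref{00} one has $y_{(n,t)}=0$ and $\hat{y}^n_t=\zeta_t=\mathbb{E}[\zeta|\mathcal{F}_t]$ there, while $\Delta z=\Delta\tilde{z}=\Delta\gamma=0$; hence $\sup_{t>n}\mathbb{E}[e^{-\beta t}|\Delta y_t|^2]=\sup_{t>n}e^{-\beta t}\mathbb{E}|\mathbb{E}[\zeta|\mathcal{F}_t]|^2$, which by Jensen's inequality is bounded by $\sup_{t>n}e^{-\beta t}\mathbb{E}|\zeta|^2$, and this regime contributes nothing to the integral because of the indicator $\mathbb{I}_{[0,n]}$. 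Combining the two regimes, the full bracket in \eqref{MV2014} is dominated by a constant multiple of $e^{-\beta n}\mathbb{E}|\zeta|^2$, where I again invoke Jensen to write $\mathbb{E}[e^{-\beta n}|\zeta_n|^2]\le e^{-\beta n}\mathbb{E}|\zeta|^2$.

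Letting $n\to\infty$, the factor $e^{-\beta n}$ drives the whole bound to $0$, which yields \eqref{MV2014}. The point that must be stated with care — and the genuine obstacle — is the admissible range of $\beta$: the energy estimate requires $-\beta-2\mu_2-2K_1^2-2K_2^2-2K_3^2-\delta>0$ (supplied by {\bf (A8)}), whereas the decay of the right-hand side requires $e^{-\beta n}\to0$; these are simultaneously compatible precisely because the monotonicity constant $\mu_2$ is taken negative enough, exactly the regime in which Remark \ref{rem25} operates. Controlling the tail $t>n$ uniformly inside the running supremum over all of $\mathbb{R}_+$, rather than only on the truncation interval $[0,n]$, is the main technical step to get right.
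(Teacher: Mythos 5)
Your proof is correct and is essentially the argument the paper intends: the paper states Lemma \ref{lemma23} without a written proof (deferring to the computation in Remark \ref{rem25} and to Lemma 2 of \cite{MV14}), and your energy estimate — It\^{o}'s formula applied to $e^{-\beta t}|\Delta y_t|^2$ on $[0,n]$ for the difference of the solutions to \eqref{fhbsdep01} and \eqref{fhbsdep02}, which share the generator $g$ on $[0,n]$ and differ only through the terminal datum $-\zeta_n$, followed by Jensen's inequality and the splitting of the supremum over $\mathbb{R}_+$ at $t=n$ — is exactly that computation specialized to this pair. You also correctly identify the compatibility condition that makes the argument close ($\beta>0$ for $e^{-\beta n}\mathbb{E}|\zeta|^2\rightarrow0$ together with $\mu_2$ negative enough so that {\bf (A8)} yields a uniform positive coercivity constant), which is precisely the regime in which Remark \ref{rem25} operates.
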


\begin{remark}\label{remark26}
It is noted that the solution to infinite horizon BSDEP \eqref{ifbsdep01} is defined on $[0,\infty)$. Therefore, we should restrict $\beta<0$ in the final part \eqref{E1}--\eqref{E4} of existence's proof when we guarantee the limiting process solves \eqref{ifbsdep01} on $[0,\infty)$. Then we first get the existence result of solutions in a smaller space, by which it then naturally obtains its existence in a larger space. In fact, a recent result found in \cite{SZ20} shows that the value of $\beta$ is not necessary to be restricted to be negative. In other words, the existence and uniqueness of infinite horizon BSDE can hold with $\beta\in \mathbb{R}$, provided that the solutions $(y(\cdot),z(\cdot))\in L^{2,-\beta}_{\mathcal{F}}(0,\infty;\mathbb{R}^2)$ is defined as the Definition 2.1 in \cite{SZ20}. In our framework, the definition can be translated to the following one.

\begin{definition}\label{definition22}
For any $x(\cdot)\in L^{2,-\beta}_{\mathcal{F}}(0,\infty;\mathbb{R})$, a quadruple $(y(\cdot),z(\cdot),\tilde{z}(\cdot),\gamma(\cdot,\cdot))\in L^{2,-\beta}_{\mathcal{F}}(0,\infty;\\\mathbb{R}^3)\times L^{2,-\beta}_{\mathcal{F},\nu}(0,\infty;\mathbb{R})$ is called a solution to infinite horizon BSDEP \eqref{ifbsdep01}, if it is satisfied in the following sense: for any $T\in[0,\infty)$,
\begin{equation}\label{yT}
y_t=y_T+\int_t^Tg(x_s,y_s,z_s,\tilde{z}_s,\gamma_{(s,e)})ds-\int_t^Tz_sdW_s-\int_t^T\tilde{z}_sd\xi_s-\int_t^T\int_{\mathcal{E}}\gamma_{(s,e)}\tilde{N}(de,ds).
\end{equation}
\end{definition}
Therefore, the solution to \eqref{ifbsdep01} is only defined on any given finite horizon $[0,T]$. Then, in this case, $\beta<0$ can be relaxed. Indeed, it is not hard to check that the limit $(y(\cdot),z(\cdot),\tilde{z}(\cdot),\gamma(\cdot,\cdot))\in L^{2,-\beta}_{\mathcal{F}}(0,\infty;\mathbb{R}^3)\times L^{2,-\beta}_{\mathcal{F},\nu}(0,\infty;\mathbb{R})$ solves \eqref{ifbsdep01} on an arbitrary interval $[0,T]$ for $\beta\in\mathbb{R}$. So we can prove its existence and uniqueness result of solution to \eqref{ifbsdep01} in $L^{2,-\beta}_{\mathcal{F}}(0,\infty;\mathbb{R}^3)\times L^{2,-\beta}_{\mathcal{F},\nu}(0,\infty;\mathbb{R})$ without restricting $\beta<0$ first. Similar discussion are also  illustrated in \cite{WY21}.

In fact, by similar Laplace transform, \eqref{yT} is equivalent to the following equation
\begin{equation}\label{yinfty}
\begin{aligned}
e^{-\beta t}y_t&=\int_t^\infty\beta e^{-\beta s}y_sds+\int_t^\infty e^{-\beta s}g(x_s,y_s,z_s,\tilde{z}_s,\gamma_{(s,e)})ds-\int_t^\infty e^{-\beta s}z_sdW_s\\
&\quad-\int_t^\infty e^{-\beta s}\tilde{z}_sd\xi_s-\int_t^\infty \int_{\mathcal{E}}e^{-\beta s}\gamma_{(s,e)}\tilde{N}(de,ds),\ t\in[0,\infty).
\end{aligned}
\end{equation}
There is another explanation by \eqref{yinfty} to guarantee the existence and uniqueness result directly in larger space $L^{2,-\beta}_{\mathcal{F}}(0,\infty;\mathbb{R})$ (where the discounting weight $e^{-\beta t}(\beta>0)$ is more reasonable in economic) instead of some restrictions imposing on $\beta$. We also illustrate certain term in \eqref{yinfty} and other term can be discussed similarly.
\begin{equation*}
\begin{aligned}
&\mathbb{E}\bigg[\int_t^\infty e^{-\beta s}\big(g(x,y_n,z_n,\tilde{z}_n,\gamma_n)-g(x,y,z,\tilde{z},\gamma)\big)ds\bigg]^2\\
&\leq \mathbb{E}\bigg[\int_t^\infty e^{-\beta s}\big|g(x,y_n,z_n,\tilde{z}_n,\gamma_n)-g(x,y,z,\tilde{z},\gamma)\big|^2ds\int_t^\infty e^{-\beta s}ds\bigg]\\
\end{aligned}
\end{equation*}
\begin{equation}
\begin{aligned}
&\leq \mathbb{E}\int_t^\infty e^{-\beta s}\big(e^{-K_0s}|y_n-y|^2+e^{-K_1s}|z_n-z|^2+e^{-K_2s}|\tilde{z}_n-\tilde{z}|^2\\
&\qquad +e^{-K_3s}||\gamma_n-\gamma||^2\big)ds\rightarrow0,\ \text{as}\ n\rightarrow\infty,
\end{aligned}
\end{equation}
which means, in $L^2(\Omega,\mathcal{F},\mathbb{P};\mathbb{R})$,
\begin{equation}
\int_t^\infty e^{-\beta s}g(x_s,y_{(n,s)},z_{(n,s)},\tilde{z}_{(n,s)},\gamma_{(n,s,e)})ds\rightarrow \int_t^\infty e^{-\beta s}g(x_s,y_s,z_s,\tilde{z}_s,\gamma_{(s,e)})ds,
\end{equation}
where, $L^{2,-\beta_2}_{\mathcal{F}}(0,\infty;\mathbb{R})\subset L^{2,-\beta_1}_{\mathcal{F}}(0,\infty;\mathbb{R})$ if $\beta_1>\beta_2$ due to $\mathbb{E}\int_0^\infty e^{-\beta_1t}|y_t|^2dt\leq \mathbb{E}\int_0^\infty e^{-\beta_2t}|y_t|^2dt\\ <\infty$. That is, the larger $\beta>0$ is, the larger corresponding space is. Therefore, it is easy to check the limit process solves \eqref{yinfty}, i.e., $(y(\cdot),z(\cdot),\tilde{z}(\cdot),\gamma(\cdot,\cdot))$ satisfies \eqref{ifbsdep01} on interval $[0,T]$ for arbitrary $T>0$. So, in the sense, $\beta<0$ appearing in Theorem \ref{the22} is unnecessary.
\end{remark}

Up to now, for given $u\in U$, under assumptions {\bf (H0)--(H1)} and {\bf (A1)--(A8)}, the controlled FBSDEP \eqref{iffbsdep01} admits a unique solution $(x^u(\cdot),y^u(\cdot),z^u(\cdot),\tilde{z}^u(\cdot),\gamma^u(\cdot,\cdot))\in L^{2,-\beta}_{\mathcal{F}}(0,\infty;\mathbb{R}^4)\times L^{2,-\beta}_{\mathcal{F},\nu}(0,\infty;\mathbb{R})$. In addition, the cost functional \eqref{cf} becomes
\begin{equation}\label{cf01}
J(u)=\mathbb{E}\bigg[\int_0^\infty e^{-\beta t}\mathcal{Z}_tf(x_t^u,y_t^u,z_t^u,\tilde{z}_t^u,\int_{\mathcal{E}}\gamma_{(t,e)}^u\nu(de),u_t)dt+\phi(y_0^u)\bigg],
\end{equation}
subject to \eqref{iffbsdep01} and \eqref{mathcalZ}, and the admissible control set is denoted by
\begin{equation}\label{adcontrol}
\mathcal{U}_{ad}[0,\infty]=\left\{u\in U\Big|u\in\mathcal{F}_t^\xi,\bar{\mathbb{E}}\int_0^\infty e^{-\beta_1kt}|u_t|^{2k}dt<\infty,\ \text{for any }\beta_1\geq0,\ k\geq1\right\}.
\end{equation}

\begin{remark}\label{rem26}
As mentioned in Remark \ref{rem22}, we can give the well-posedness of cost functional \eqref{cf} by \eqref{cf01}. With {\bf (H1)}, we know that $f$ is quadratic growth with respect to $(x,y,z,\tilde{z},\gamma,u)$. Therefore, it is easy to check that $\bar{\mathbb{E}}\int_0^\infty e^{-\beta t}f(x^u,y^u,z^u,\tilde{z}^u,\int_{\mathcal{E}}\gamma^u\nu(de),u)dt<\infty$ by Lemma \ref{lemma32} in the following and \eqref{adcontrol}. Similarly, we have $\bar{\mathbb{E}}[\phi(y_0^u)]<\infty$. So we get the desired result.
\end{remark}

\section{Ergodic maximum principle}

In this section, we give the partially observed ergodic maximum principle of FBSDEP on infinite horizon.

\subsection{Some important estimates}

First, a necessary lemma is needed mentioned in Confortola \cite{C19}.
\begin{lemma}\label{Confortola}
Let $k\geq2$, and $a,b\in\mathbb{R}$. Then for every $\epsilon>0$, we have
\begin{equation*}
|a+b|^{2k}\leq(1+\epsilon)|a|^{2k}+c_\epsilon|b|^{2k},
\end{equation*}
where $c_\epsilon=\bigg(1-\Big(\frac{1}{1+\epsilon}\Big)^{\frac{1}{2k-1}}\bigg)^{1-2k}$.
\end{lemma}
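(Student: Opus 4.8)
The plan is to exploit the convexity of the map $x \mapsto |x|^{2k}$ on $\mathbb{R}$, which is available since $2k \geq 2$. First I would write the sum $a+b$ as a genuine convex combination: for any weight $\theta \in (0,1)$,
\[
a+b = \theta \cdot \frac{a}{\theta} + (1-\theta)\cdot \frac{b}{1-\theta}.
\]
Applying Jensen's inequality to the two points $a/\theta$ and $b/(1-\theta)$ carrying weights $\theta$ and $1-\theta$ then yields
\[
|a+b|^{2k} \leq \theta \left|\frac{a}{\theta}\right|^{2k} + (1-\theta)\left|\frac{b}{1-\theta}\right|^{2k} = \theta^{1-2k}|a|^{2k} + (1-\theta)^{1-2k}|b|^{2k}.
\]

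The second step is to tune the free parameter $\theta$ so that the coefficient of $|a|^{2k}$ coincides with the prescribed value $1+\epsilon$. This forces $\theta^{1-2k} = 1+\epsilon$, that is $\theta = (1+\epsilon)^{1/(1-2k)} = \left(\frac{1}{1+\epsilon}\right)^{1/(2k-1)}$. Because $1+\epsilon > 1$ while $1-2k < 0$, this $\theta$ automatically lies in $(0,1)$, so the convexity step is legitimate. With this single choice the coefficient of $|b|^{2k}$ becomes exactly $(1-\theta)^{1-2k} = \left(1 - \left(\frac{1}{1+\epsilon}\right)^{1/(2k-1)}\right)^{1-2k}$, which is precisely the constant $c_\epsilon$ appearing in the statement, and the inequality follows.

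I expect no genuine analytic obstacle here; the whole argument collapses to three or four lines once the convex-combination decomposition is identified, and the real content is just the bookkeeping that pins down $\theta$. The only points requiring a word of care are the verification that the selected $\theta$ sits strictly inside $(0,1)$ so that Jensen's inequality applies (immediate from the signs of $1+\epsilon$ and $1-2k$), and the degenerate cases $a=0$ or $b=0$, which are handled directly: for $a=0$ one only needs $c_\epsilon \geq 1$, which holds since $1-\theta \in (0,1)$ is raised to the negative exponent $1-2k$.
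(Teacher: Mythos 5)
Your proof is correct, and in fact the paper itself offers no proof to compare against: Lemma \ref{Confortola} is stated there without argument, with a citation to Confortola \cite{C19}, so your convex-combination computation supplies the standard derivation. The tuning step checks out exactly: with $\theta=(1+\epsilon)^{1/(1-2k)}\in(0,1)$ one gets $\theta^{1-2k}=1+\epsilon$ and $(1-\theta)^{1-2k}=\big(1-(\tfrac{1}{1+\epsilon})^{\frac{1}{2k-1}}\big)^{1-2k}=c_\epsilon$, matching the stated constant. One small remark: your separate treatment of the degenerate cases $a=0$ or $b=0$ is unnecessary, since the identity $a+b=\theta\,(a/\theta)+(1-\theta)\,(b/(1-\theta))$ and the convexity inequality are valid for all real $a,b$, including zero; also, convexity of $x\mapsto|x|^{2k}$ only needs $2k\geq1$, so the argument actually proves the lemma for all $k\geq1$, not just $k\geq2$.
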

We then give the following important estimates.
\begin{lemma}\label{lemma31}
Let {\bf (H0)} and {\bf (A1)--(A8)} hold. For any $u(\cdot)\in\mathcal{U}_{ad}[0,\infty]$, let $k\geq1$ such that
\begin{equation}
\begin{aligned}
&\int_0^\infty e^{-\beta_5t}\big(\mathbb{E}\tilde{b}^{2k}(0,0)\big)^{\frac{1}{2k}}+e^{-\beta_5t}\big(\mathbb{E}\sigma^{2k}(0,0)\big)^{\frac{1}{k}}+e^{-\beta_5t}\big(\mathbb{E}\tilde{\sigma}^{2k}(0,0)\big)^{\frac{1}{k}}\\
&\quad+e^{-\beta_5t}\bigg(\mathbb{E}\int_{\mathcal{E}}l^{2k}(0,0,e)\nu(de)\bigg)^{\frac{1}{2k}}dt<\infty,\\
&\mathbb{E}\int_0^\infty e^{-\beta_5kt}\int_{\mathcal{E}}l^{2k}(0,0,e)\nu(de)dt<\infty,\ \mathbb{E}\bigg(\int_0^\infty e^{-\beta_0t}g(x,0,0,0,0,0)dt\bigg)^{2k}<\infty,
\end{aligned}
\end{equation}
for $\beta_0\leq\min\{\beta_3,\beta_4\},\beta_1\leq\min\{\beta_2,\beta_3,\beta_4\},\beta_5\leq\beta_2$.
Then there exists $\beta_0,\cdots,\beta_5>0$ and constants $C_k,C_{k,\epsilon}$ such that the solution to \eqref{iffbsdep01} satisfying
\begin{equation}\label{xy2k}
\begin{aligned}
&\sup_{t\in\mathbb{R}_+}\mathbb{E}\big[e^{-\beta_2kt}|x_t^u|^{2k}\big]
 \leq C_k\bigg[\mathbb{E}|x_0|^{2k}+\mathbb{E}\int_0^\infty e^{-\beta_1kt}|u_t|^{2k}dt+\bigg(\int_0^\infty e^{-\beta_5t}\big(\mathbb{E}\tilde{b}^{2k}(0,0)\big)^{\frac{1}{2k}}dt\bigg)^{2k}\\
&\quad +\bigg(\int_0^\infty e^{-\beta_5t}\big(\mathbb{E}\sigma^{2k}(0,0)\big)^{\frac{1}{k}}dt\bigg)^{k}+\bigg(\int_0^\infty e^{-\beta_5t}\big(\mathbb{E}\tilde{\sigma}^{2k}(0,0)\big)^{\frac{1}{k}}dt\bigg)^k\\
&\quad +\bigg(\int_0^\infty e^{-\beta_5t}\bigg(\mathbb{E}\int_{\mathcal{E}}l^{2k}(0,0,e)\nu(de)\bigg)^{\frac{1}{2k}}dt\bigg)^{2k}+\mathbb{E}\bigg(\int_0^\infty e^{-\beta_5kt}\int_{\mathcal{E}}l^{2k}(0,0,e)\nu(de)dt\bigg)\bigg],\\
&\mathbb{E}\bigg[\sup_{t\in\mathbb{R}_+} e^{-\beta_3kt}|y_t^u|^{2k}+\bigg(\int_0^\infty e^{-\beta_4t}|z^u_t|^2dt\bigg)^k\\
&\quad +\bigg(\int_0^\infty e^{-\beta_4t}|\tilde{z}^u_t|^2dt\bigg)^k+\bigg(\int_0^\infty\int_{\mathcal{E}} e^{-\beta_4t}|\gamma^u_{(t,e)}|^2N(de,dt)\bigg)^k\bigg]\\
&\leq C_{k,\epsilon}\bigg[\mathbb{E}\int_0^\infty e^{-\beta_1kt}|u_t|^{2k}dt+\mathbb{E}\bigg(\int_0^\infty e^{-\beta_0t}g(x^u_t,0,0,0,0,0)dt\bigg)^{2k}\bigg].
\end{aligned}
\end{equation}
\end{lemma}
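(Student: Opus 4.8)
The plan is to prove the forward and backward estimates in \eqref{xy2k} in turn, since the bound for $g(x^u_t,0,0,0,0,0)$ appearing on the right of the backward inequality will be supplied by the forward bound together with {\bf (A8)}. Throughout, the two technical levers are Confortola's Lemma \ref{Confortola}, which lets me split a $2k$-th power $|a+b|^{2k}$ with only a factor $(1+\epsilon)$ in front of the leading term, and the Burkholder--Davis--Gundy / Kunita inequality for the Brownian and compensated-Poisson martingales; the several discount factors $\beta_0,\dots,\beta_5$ are chosen at the end so that every time-integral on the right converges and every Gronwall coefficient has the correct sign.

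For the forward estimate I would start from the integral form of the SDEP in \eqref{iffbsdep01}, bound $|x^u_t|^{2k}$ by the $2k$-th powers of the initial datum, the drift integral, and the two stochastic integrals, and then take $L^{2k}(\Omega)$-norms. The drift is handled by the Minkowski integral inequality, which produces the $\big(\int_0^\infty e^{-\beta_5 t}(\mathbb E\tilde b^{2k}(0,0))^{1/2k}dt\big)^{2k}$ structure, while the diffusion and jump integrals are handled by BDG and Kunita, producing the $k$-th-power-of-time-integral structure for $\sigma,\tilde\sigma$ and, for the jump term, the two distinct contributions $\big(\int_0^\infty e^{-\beta_5 t}(\mathbb E\int_{\mathcal E}l^{2k}\nu(de))^{1/2k}dt\big)^{2k}$ and $\mathbb E\int_0^\infty e^{-\beta_5 kt}\int_{\mathcal E}l^{2k}\nu(de)dt$. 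Using the Lipschitz bounds {\bf (A3)} and the boundedness of $b_u,\sigma_u,\tilde\sigma_u,l_u$ from {\bf (A1)}, each coefficient is split as $|\Psi(x,u)|\le|\Psi(0,0)|+L_\Psi|x|+L'_\Psi|u|$, so that Confortola's lemma sends the $|x|^{2k}$ piece (absorbed by choosing $\beta_2$ large and invoking the $2k$-analogue of Lemma \ref{lem21}), the $|u|^{2k}$ piece (into the $\beta_1$-forcing term, finite by admissibility \eqref{adcontrol}), and the origin piece (into the $\beta_5$-forcing terms) to their respective places. A weighted Gronwall argument on $[0,T]$, followed by $T\to\infty$ and then $\sup_{t}$, yields the first line of \eqref{xy2k}; the term $e^{-\frac{\beta}{2}t}\tilde\sigma h$ hidden in $\tilde b$ is controlled by the boundedness of $h$ in {\bf (H0)}.

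For the backward estimate I would apply It\^o's formula to $e^{-\beta_3 kt}|y^u_t|^{2k}$ on $[t,T]$. The quadratic variation of the martingale part produces the positive terms $k(2k-1)e^{-\beta_3 kt}|y^u|^{2k-2}(|z^u|^2+|\tilde z^u|^2)$ and the analogous jump contribution, from which the $(z^u,\tilde z^u,\gamma^u)$-norms on the left are extracted (using the equation itself and BDG to convert $\int|y|^{2k-2}|z|^2$-type terms into $\big(\int|z|^2\big)^k$, absorbing the resulting cross terms $\mathbb E\big[\sup(e^{-\beta_3kt}|y|^{2k})\big]^{1/2}\big(\int e^{-\beta_4 t}|z|^2\big)^{k/2}$ by Young's inequality). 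The generator is decomposed as $g(\cdots)=\big[g(\cdots)-g(x,0,0,0,0,0)\big]+g(x,0,0,0,0,0)$; the bracket is controlled by the monotonicity {\bf (A6)} in $y$ and the Lipschitz estimates {\bf (A7)} in $(z,\tilde z,\gamma,u)$, so that {\bf (A8)} (together with the ordering $\beta_0\le\min\{\beta_3,\beta_4\}$) makes the coefficient of $|y|^{2k}$ negative and hence discardable, while $g(x^u,0,0,0,0,0)$ becomes the forcing term whose $2k$-th moment is finite by {\bf (A8)} and the forward bound. Applying BDG to the Brownian and compensated-Poisson integrals upgrades $\sup_t\mathbb E[\cdots]$ to $\mathbb E[\sup_t\cdots]$, and $T\to\infty$ is justified by the truncation approximation of Theorem \ref{the22}.

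The step I expect to be the main obstacle is the simultaneous control of the high-order jump terms and the bookkeeping of the discount factors. Because the jump compensator at power $2k$ is genuinely super-quadratic, the Kunita estimate for $\int_{\mathcal E}l\tilde N$ splits into a quadratic-variation piece and a true $L^{2k}$-in-jump-size remainder, which is exactly why two separate $l$-forcing terms (with exponent $1/2k$ and the plain $L^k$-in-time form) appear in \eqref{xy2k}; keeping both consistent with the same weight $e^{-\beta_5 t}$ requires care. Compounding this, one must verify that the chosen $\beta_0,\dots,\beta_5$ simultaneously make both Gronwall coefficients negative, keep every time-integral forcing term finite under the hypotheses and \eqref{adcontrol}, and respect the orderings $\beta_0\le\min\{\beta_3,\beta_4\}$, $\beta_1\le\min\{\beta_2,\beta_3,\beta_4\}$, $\beta_5\le\beta_2$ so that the forward bound feeds into the backward one without losing integrability. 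The cross-term absorption in the backward estimate, made possible by the sharp $(1+\epsilon)$ splitting of Confortola's lemma together with the correct Gronwall sign, is the other genuinely delicate point.
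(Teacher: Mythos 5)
Your proposal is correct and follows essentially the same route as the paper's appendix proof: the paper likewise obtains the forward bound by It\^{o}'s formula applied to $e^{-\beta kt}|x^u_t|^{2k}$ with weighted Young/H\"{o}lder absorption of the cross terms (your integral-form-plus-Minkowski/BDG packaging produces the identical six forcing terms and the same weak norm $\sup_t\mathbb{E}$), and the backward bound by It\^{o} at powers $2$ and $2k$, Confortola's Lemma \ref{Confortola} on the jump increment $|y_{t-}+\gamma|^{2k}-|y_{t-}|^{2k}-2k|y_{t-}|^{2k-1}\gamma$, BDG to upgrade $\sup_t\mathbb{E}$ to $\mathbb{E}\sup_t$ for $y$, cross-term absorption with small parameters $\lambda,\delta$, the sign condition supplied by {\bf (A8)}, and the same orderings of $\beta_0,\dots,\beta_5$, with $T\to\infty$ handled exactly as you indicate. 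The one ingredient the paper makes explicit that your sketch leaves implicit is the comparison $\mathbb{E}\big(\int_0^\infty\int_{\mathcal{E}}e^{-\beta t}|\gamma^u_{(t,e)}|^2\nu(de)dt\big)^k\leq\mathbb{E}\big(\int_0^\infty\int_{\mathcal{E}}e^{-\beta t}|\gamma^u_{(t,e)}|^2N(de,dt)\big)^k$, i.e.\ \eqref{estimate for N}, which is what allows the compensator-form jump terms generated by the $K_3(t)\|\gamma\|$ Lipschitz bound to be absorbed into the $N$-form norm appearing on the left of \eqref{xy2k}.
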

We give the proof in the Appendix for the reader's convenience. The following lemma is a direct consequence of Lemma \ref{lemma31}.

\begin{lemma}\label{lemma32}
Let {\bf (H0)} and {\bf (A1)--(A8)} hold. For any $u(\cdot)\in\mathcal{U}_{ad}[0,\infty]$, there exists $\beta_1,\cdots,\beta_4>0$ and constant $C_k$ such that for $k\geq1$,
\begin{equation*}
\begin{aligned}
&\sup_{t\in\mathbb{R}_+}\mathbb{E}\big[e^{-\beta_2kt}|x_t^u|^{2k}\big]+\mathbb{E}\bigg[\sup_{t\in\mathbb{R}_+} e^{-\beta_3kt}|y_t^u|^{2k}\bigg]\leq C_k\bigg[1+\mathbb{E}\int_0^\infty e ^{-\beta_1kt}|u_t|^{2k}dt\bigg],\\
&\mathbb{E}\bigg[\bigg(\int_0^\infty e^{-\beta_4t}|z^u_t|^2dt\bigg)^k+\bigg(\int_0^\infty e^{-\beta_4t}|\tilde{z}^u_t|^2dt\bigg)^k+\bigg(\int_0^\infty\int_{\mathcal{E}} e^{-\beta_4t}|\gamma^u_{(t,e)}|^2N(de,dt)\bigg)^k\bigg]\\
&\quad\leq C_k\bigg[1+\mathbb{E}\int_0^\infty e^{-\beta_1kt}|u_t|^{2k}dt\bigg],\quad \mathbb{E}\bigg[\sup_{t\in\mathbb{R}_+}|\mathcal{Z}^u_t|^k\bigg]<\infty,
\end{aligned}
\end{equation*}
where $\beta_1\leq\min\{\beta_2,\beta_3,\beta_4\}$.
\end{lemma}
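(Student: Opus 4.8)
The plan is to read off both inequalities directly from Lemma \ref{lemma31} and then show that every term on the right-hand side of \eqref{xy2k} which is not already of the form $\mathbb{E}\int_0^\infty e^{-\beta_1 kt}|u_t|^{2k}dt$ is either a finite constant or controllable by the moment bound for $x^u$. For the first estimate this is immediate: the weighted integrals of $\tilde b(0,0),\sigma(0,0),\tilde\sigma(0,0),l(0,0,e)$ appearing in \eqref{xy2k} are finite precisely by the integrability hypotheses imposed in Lemma \ref{lemma31} (which are in turn guaranteed by {\bf (A1)}--{\bf (A4)}), and $\mathbb{E}|x_0|^{2k}<\infty$ by {\bf (A1)}. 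Collecting these finite quantities into the additive constant ``$1$'' yields $\sup_{t}\mathbb{E}[e^{-\beta_2 kt}|x_t^u|^{2k}]\le C_k[1+\mathbb{E}\int_0^\infty e^{-\beta_1 kt}|u_t|^{2k}dt]$, and the same right-hand side will dominate $\mathbb{E}[\sup_t e^{-\beta_3 kt}|y_t^u|^{2k}]$ once the second half of the argument is in place.

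For the estimate on $(y,z,\tilde z,\gamma)$, Lemma \ref{lemma31} reduces everything to controlling the single term $\mathbb{E}\big(\int_0^\infty e^{-\beta_0 t}g(x_t^u,0,0,0,0,0)\,dt\big)^{2k}$. Since $g$ is Lipschitz in its first argument by {\bf (A5)}, I would use $|g(x,0,0,0,0,0)|\le|g(0,0,0,0,0,0)|+C|x|$ together with $(a+b)^{2k}\le 2^{2k-1}(a^{2k}+b^{2k})$ to split this into a deterministic piece $\mathbb{E}(\int_0^\infty e^{-\beta_0 t}|g(0,0,0,0,0,0)|dt)^{2k}$, finite by {\bf (A8)}, and a piece $C\,\mathbb{E}(\int_0^\infty e^{-\beta_0 t}|x_t^u|dt)^{2k}$. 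For the latter I would apply H\"older's inequality in the weighted measure $e^{-\beta_0 t}dt$ to get $(\int_0^\infty e^{-\beta_0 t}|x_t^u|dt)^{2k}\le(\int_0^\infty e^{-\beta_0 t}dt)^{2k-1}\int_0^\infty e^{-\beta_0 t}|x_t^u|^{2k}dt$, take expectations, and insert $\mathbb{E}|x_t^u|^{2k}\le e^{\beta_2 kt}\sup_s\mathbb{E}[e^{-\beta_2 ks}|x_s^u|^{2k}]$. Provided the exponents are chosen so that $\beta_0>\beta_2 k$, the time integral $\int_0^\infty e^{-(\beta_0-\beta_2 k)t}dt$ converges, and the first estimate turns this piece into $C_k[1+\mathbb{E}\int_0^\infty e^{-\beta_1 kt}|u_t|^{2k}dt]$, as required.

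Finally, for the moment bound on the density $\mathcal{Z}^u$ I would use the standard exponential-martingale decomposition. From \eqref{RN},
\begin{equation*}
(\mathcal{Z}_t^u)^k=\exp\Big\{k\!\int_0^t e^{-\frac{\beta}{2}s}h(x_s^u,u_s)\,d\xi_s-\tfrac{k^2}{2}\!\int_0^t e^{-\beta s}|h(x_s^u,u_s)|^2ds\Big\}\cdot\exp\Big\{\tfrac{k^2-k}{2}\!\int_0^t e^{-\beta s}|h(x_s^u,u_s)|^2ds\Big\}.
\end{equation*}
The first factor is a genuine $\mathbb{P}$-martingale (the integrand is bounded by {\bf (H0)} and $\int_0^\infty e^{-\beta s}|h|^2ds\le\|h\|_\infty^2/\beta<\infty$ since $\beta>0$, so the infinite-horizon Novikov condition holds), while the second factor is bounded above by the deterministic constant $\exp\{(k^2-k)\|h\|_\infty^2/(2\beta)\}$. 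Hence $\sup_t\mathbb{E}[(\mathcal{Z}_t^u)^{kp}]<\infty$ for every $p>1$ by repeating this with $kp$ in place of $k$; applying Doob's $L^p$ maximal inequality to the nonnegative submartingale $(\mathcal{Z}^u)^k$ and then Jensen's inequality gives $\mathbb{E}[\sup_t|\mathcal{Z}_t^u|^k]<\infty$.

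The It\^o/H\"older manipulations are routine; the genuine difficulty is the bookkeeping of the discount rates, namely selecting $\beta_0,\dots,\beta_5>0$ simultaneously so that every weighted integral above converges (in particular reconciling $\beta_0>\beta_2 k$ from the H\"older step with the constraint $\beta_0\le\min\{\beta_3,\beta_4\}$ inherited from Lemma \ref{lemma31}) while still respecting $\beta_1\le\min\{\beta_2,\beta_3,\beta_4\}$. I expect this parameter-matching, rather than any single estimate, to be the main obstacle.
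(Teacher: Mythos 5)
Your overall route coincides with the paper's: the paper gives no proof of this lemma at all, saying only that it is ``a direct consequence of Lemma \ref{lemma31}'', and the details you supply are exactly what that one-line citation presupposes --- the weighted data integrals are finite under {\bf (A1)}--{\bf (A4)}, everything reduces to the term $\mathbb{E}\big(\int_0^\infty e^{-\beta_0 t}g(x_t^u,0,0,0,0,0)\,dt\big)^{2k}$, which you attack via $|g(x,0,\dots,0)|\le|g(0,\dots,0)|+C|x|$ (bounded derivatives in {\bf (A5)}), and the bound $\mathbb{E}[\sup_t|\mathcal{Z}_t^u|^k]<\infty$ follows from the exponential-martingale factorization with the bounded integrand $e^{-\beta s/2}h$ (so $\int_0^\infty e^{-\beta s}|h|^2ds\le B_{h}^2/\beta$) together with Doob's maximal inequality; that last argument is correct as written and is the only available one here.

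There is, however, one step that fails as stated: after H\"older you insert $\mathbb{E}|x_t^u|^{2k}\le e^{\beta_2 kt}\sup_s\mathbb{E}[e^{-\beta_2 ks}|x_s^u|^{2k}]$ with the \emph{entire} weight $e^{-\beta_0 t}$ already spent, which forces $\beta_0>\beta_2 k$. You flag this as the main obstacle, and it genuinely is one: in the Appendix proof of Lemma \ref{lemma31} the forward estimate requires $\beta_2 k>C_{\mu_1,k}$, so $\beta_2$ is bounded below by a fixed positive quantity, while the backward estimate requires $\beta k+2k\mu_2+2k+\cdots\le0$, which caps $\beta_3,\beta_4$ --- and hence $\beta_0\le\min\{\beta_3,\beta_4\}$ --- by a constant independent of $k$. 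So for moderately large $k$ the condition $\beta_0>\beta_2 k$ is unachievable, and your proof cannot be completed as written. The requirement is an artifact of the bookkeeping, not of the problem: distribute the weight \emph{before} applying H\"older. For any $b$ with $\beta_2/2<b<\beta_0$, writing $e^{-\beta_0 t}|x_t^u|=e^{-(\beta_0-b)t}\cdot e^{-bt}|x_t^u|$ and using H\"older with exponents $\big(\tfrac{2k}{2k-1},2k\big)$ gives
\begin{equation*}
\bigg(\int_0^\infty e^{-\beta_0 t}|x_t^u|\,dt\bigg)^{2k}
\le\bigg(\int_0^\infty e^{-\frac{2k}{2k-1}(\beta_0-b)t}\,dt\bigg)^{2k-1}\int_0^\infty e^{-2kbt}|x_t^u|^{2k}\,dt,
\end{equation*}
and then
\begin{equation*}
\mathbb{E}\int_0^\infty e^{-2kbt}|x_t^u|^{2k}\,dt
\le\sup_{s\in\mathbb{R}_+}\mathbb{E}\big[e^{-\beta_2 ks}|x_s^u|^{2k}\big]\int_0^\infty e^{-k(2b-\beta_2)t}\,dt<\infty.
\end{equation*}
This replaces $\beta_0>\beta_2 k$ by the $k$-independent relation $\beta_0>\beta_2/2$, which is the kind of hierarchy the paper implicitly assumes through its flexible discount factors $\beta_0,\dots,\beta_5$ and the standing ``$\beta$ big enough''. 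With this repair, the rest of your argument goes through and matches the paper's intended derivation.
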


Applying convex variation technique, for any $(\epsilon,v(\cdot))\in(0,1)\times\mathcal{U}_{ad}[0,\infty]$, let $(x^\epsilon(\cdot),y^\epsilon(\cdot),\\z^\epsilon(\cdot),\tilde{z}^\epsilon(\cdot),\gamma^\epsilon(\cdot,\cdot))\equiv(x^{\bar{u}+\epsilon v}(\cdot),y^{\bar{u}+\epsilon v}(\cdot),z^{\bar{u}+\epsilon v}(\cdot),\tilde{z}^{\bar{u}+\epsilon v}(\cdot),\gamma^{\bar{u}+\epsilon v}(\cdot,\cdot))$ and $\mathcal{Z}^\epsilon(\cdot)\equiv\mathcal{Z}^{\bar{u}+\epsilon v}(\cdot)$ be the solutions to \eqref{iffbsdep01}, \eqref{mathcalZ}, respectively, along with the perturbed control $u^\epsilon(\cdot)=\bar{u}(\cdot)+\epsilon v(\cdot)$, and let $(\bar{x}(\cdot),\bar{y}(\cdot),\bar{z}(\cdot),\bar{\tilde{z}}(\cdot),\bar{\gamma}(\cdot,\cdot))$ and $\bar{\mathcal{Z}}(\cdot)$ be the solutions to \eqref{iffbsdep01}, \eqref{mathcalZ}, respectively, along with the optimal control $\bar{u}(\cdot)$.

Similar as Lemma \ref{lemma32}, we have the following result.

\begin{lemma}\label{lemma33}
Let {\bf (H0)} and {\bf (A1)--(A8)} hold, there exists $\beta_2,\cdots,\beta_4>0$ and constant $C$ such that for $k\geq1$,
\begin{equation*}
\begin{aligned}
&\sup_{t\in\mathbb{R}_+}\mathbb{E}\big[e^{-\beta_2kt}|x_t^\epsilon-\bar{x}_t|^{2k}\big]\leq C\epsilon^{2k},\quad \mathbb{E}\bigg[\sup_{t\in\mathbb{R}_+}e^{-\beta_3kt}|y_t^\epsilon-\bar{y}_t|^{2k}\bigg]\leq C\epsilon^{2k},\\
&\mathbb{E}\bigg[\bigg(\int_0^\infty e^{-\beta_4t}|z_t^\epsilon-\bar{z}_t|^2dt\bigg)^k+\bigg(\int_0^\infty e^{-\beta_4t}|\tilde{z}_t^\epsilon-\bar{\tilde{z}}_t|^2dt\bigg)^k\\
&\quad+\bigg(\int_0^\infty\int_{\mathcal{E}} e^{-\beta_4t}|\gamma^\epsilon_{(t,e)}-\bar{\gamma}_{(t,e)}|^2N(de,dt)\bigg)^k\bigg]\leq C\epsilon^{2k},\quad \mathbb{E}\bigg[\sup_{t\in\mathbb{R}_+}|\mathcal\mathcal{Z}_t^\epsilon-\bar{\mathcal{Z}}_t|^{2k}\bigg]<C\epsilon^{2k}.
\end{aligned}
\end{equation*}
\end{lemma}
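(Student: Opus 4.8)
The plan is to observe that the difference processes solve FBSDEPs and a linear SDE of exactly the same structure treated in Lemma \ref{lemma31}, but now with vanishing initial/terminal data and inhomogeneities of size $O(\epsilon)$, so that the desired $\epsilon^{2k}$ rate is inherited directly from the a priori estimates already established. Accordingly, I set $\hat{x}^\epsilon:=x^\epsilon-\bar{x}$, $\hat{y}^\epsilon:=y^\epsilon-\bar{y}$, $\hat{z}^\epsilon:=z^\epsilon-\bar{z}$, $\hat{\tilde{z}}^\epsilon:=\tilde{z}^\epsilon-\bar{\tilde{z}}$, $\hat{\gamma}^\epsilon:=\gamma^\epsilon-\bar{\gamma}$ and $\hat{\mathcal{Z}}^\epsilon:=\mathcal{Z}^\epsilon-\bar{\mathcal{Z}}$, and subtract the two copies of \eqref{iffbsdep01} and of \eqref{mathcalZ} driven by $u^\epsilon=\bar{u}+\epsilon v$ and by $\bar{u}$.

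For the forward component, I write each coefficient increment $\Psi(x^\epsilon,u^\epsilon)-\Psi(\bar{x},\bar{u})$ for $\Psi\in\{\tilde{b},\sigma,\tilde{\sigma},l\}$ as $[\Psi(\bar{x}+\hat{x}^\epsilon,u^\epsilon)-\Psi(\bar{x},u^\epsilon)]+[\Psi(\bar{x},u^\epsilon)-\Psi(\bar{x},\bar{u})]$. By \textbf{(A1)}--\textbf{(A3)} the first bracket is Lipschitz in $\hat{x}^\epsilon$ with the same constants $L_\Psi$ and obeys the monotonicity \textbf{(A2)}, while boundedness of the $u$-derivatives together with $u^\epsilon-\bar{u}=\epsilon v$ bounds the second bracket by $C\epsilon|v_t|$. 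Thus $\hat{x}^\epsilon$ solves an SDEP to which the high-moment a priori estimate underlying Lemma \ref{lemma31} (the $2k$-analogue of \eqref{est2}) applies, with zero initial datum and coefficient-difference forcing of order $\epsilon|v_t|$; this yields $\sup_{t}\mathbb{E}[e^{-\beta_2 kt}|\hat{x}^\epsilon_t|^{2k}]\leq C\epsilon^{2k}\mathbb{E}\int_0^\infty e^{-\beta_1 kt}|v_t|^{2k}dt\leq C\epsilon^{2k}$, the last step using $v\in\mathcal{U}_{ad}[0,\infty]$.

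For the backward component, subtracting the two generators and splitting via \textbf{(A5)}--\textbf{(A7)} gives a part that is Lipschitz in $(\hat{y}^\epsilon,\hat{z}^\epsilon,\hat{\tilde{z}}^\epsilon,\hat{\gamma}^\epsilon)$ with coefficients $K_0,\dots,K_3$ and monotonicity $\mu_2$, plus a forcing term dominated by $C(|\hat{x}^\epsilon_t|+\epsilon|v_t|)$. Hence $(\hat{y}^\epsilon,\hat{z}^\epsilon,\hat{\tilde{z}}^\epsilon,\hat{\gamma}^\epsilon)$ falls under the backward estimate of Lemma \ref{lemma31} with zero terminal data and the inhomogeneity $g(\cdot,0,0,0,0,0)$ replaced by $C(|\hat{x}^\epsilon|+\epsilon|v|)$. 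Controlling $\mathbb{E}(\int_0^\infty e^{-\beta_0 t}|\hat{x}^\epsilon_t|dt)^{2k}\leq C\epsilon^{2k}$ by the forward bound just obtained and $\int_0^\infty e^{-\beta_1 kt}|v_t|^{2k}dt$ by admissibility produces the stated $\epsilon^{2k}$ estimates for $\hat{y}^\epsilon$, $\hat{z}^\epsilon$, $\hat{\tilde{z}}^\epsilon$ and $\hat{\gamma}^\epsilon$.

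The density difference does not fit the FBSDEP template and must be handled separately, and this is the main obstacle. Here $\hat{\mathcal{Z}}^\epsilon$ solves the linear SDE $d\hat{\mathcal{Z}}^\epsilon_t=e^{-\frac{\beta}{2}t}[\mathcal{Z}^\epsilon_t h(x^\epsilon_t,u^\epsilon_t)-\bar{\mathcal{Z}}_t h(\bar{x}_t,\bar{u}_t)]d\xi_t$ with $\hat{\mathcal{Z}}^\epsilon_0=0$, and I decompose the bracket as $\hat{\mathcal{Z}}^\epsilon_t h(x^\epsilon_t,u^\epsilon_t)+\bar{\mathcal{Z}}_t[h(x^\epsilon_t,u^\epsilon_t)-h(\bar{x}_t,\bar{u}_t)]$, where \textbf{(H0)} makes $h$ bounded and Lipschitz so that $|h(x^\epsilon_t,u^\epsilon_t)-h(\bar{x}_t,\bar{u}_t)|\leq C(|\hat{x}^\epsilon_t|+\epsilon|v_t|)$. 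Applying It\^{o}'s formula to $|\hat{\mathcal{Z}}^\epsilon_t|^{2k}$, the Burkholder--Davis--Gundy inequality and Gronwall's lemma, and bounding the driving term through $\mathbb{E}[\sup_t|\bar{\mathcal{Z}}_t|^{2k}]<\infty$ from Lemma \ref{lemma32}, H\"{o}lder's inequality and the forward bound, yields $\mathbb{E}[\sup_t|\hat{\mathcal{Z}}^\epsilon_t|^{2k}]\leq C\epsilon^{2k}$. The delicate point is that, unlike the forward and backward parts, this exponential-martingale difference cannot be slotted into Lemma \ref{lemma31} and must be controlled by hand over the whole half-line, where the weight $e^{-\frac{\beta}{2}t}$ and the boundedness of $h$ are exactly what keep the Gronwall and BDG bounds finite as the horizon tends to infinity.
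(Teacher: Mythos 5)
Your proposal is correct and follows essentially the route the paper intends: Lemma \ref{lemma33} is stated as an analogue of Lemma \ref{lemma32}, i.e.\ a direct consequence of the a priori estimates in Lemma \ref{lemma31} applied to the difference equations, which is exactly your reduction, with the coefficient increments of size $O(\epsilon|v_t|)$ absorbed via \textbf{(A1)--(A8)} and admissibility of $v(\cdot)$, and with the hierarchy of discount factors $\beta_i$ providing the needed integrability when feeding the forward bound into the backward one. Your separate treatment of $\hat{\mathcal{Z}}^\epsilon$ by It\^{o}'s formula, the Burkholder--Davis--Gundy inequality and Gronwall's lemma (using boundedness of $h$, the weight $e^{-\frac{\beta}{2}t}$, and $\mathbb{E}\big[\sup_{t\in\mathbb{R}_+}|\bar{\mathcal{Z}}_t|^{2k}\big]<\infty$) is likewise the same technique the paper itself deploys for the corresponding term $\tilde{\mathcal{Z}}^\epsilon$ in the proof of Lemma \ref{lemma36}.
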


Then we introduce the variational equations as follows:
\begin{equation}\label{x1y1}
\left\{
\begin{aligned}
 dx_{(1,t)}&=\big(\bar{\tilde{b}}_xx_{(1,t)}+\bar{\tilde{b}}_uv_t\big)dt+\big(\bar{\sigma}_xx_{(1,t)}+\bar{\sigma}_uv_t\big)dW_t+\big(\bar{\tilde{\sigma}}_xx_{(1,t)}+\bar{\tilde{\sigma}}_uv_t\big)d\xi_t\\
           &\quad+\int_{\mathcal{E}}\big(\bar{l}_xx_{(1,t)}+\bar{l}_uv_t\big)\tilde{N}(de,dt),\\
-dy_{(1,t)}&=\bigg[\bar{g}_xx_{(1,t)}+\bar{g}_yy_{(1,t)}+\bar{g}_zz_{(1,t)}+\bar{g}_{\tilde{z}}\tilde{z}_{(1,t)}+\bar{g}_\gamma\int_{\mathcal{E}}\gamma_{(1,t,e)}\nu(de)+\bar{g}_uv_t\bigg]dt\\
           &\quad -z_{(1,t)}dW_t-\tilde{z}_{(1,t)}d\xi_t-\int_{\mathcal{E}}\gamma_{(1,t,e)}\tilde{N}(de,dt),\ t\in[0,\infty),\\
  x_{(1,0)}&=0,
\end{aligned}
\right.
\end{equation}
and
\begin{equation}\label{mathcalZ1}
\left\{
\begin{aligned}
d\mathcal{Z}_{(1,t)}&=e^{-\frac{\beta}{2}t}\Big[\bar{\mathcal{Z}}\big(\bar{h}_xx_{(1,t)}+\bar{h}_uv_t\big)+\bar{h}\mathcal{Z}_{(1,t)}\Big]d\xi_t,\\
 \mathcal{Z}_{(1,0)}&=0,
\end{aligned}
\right.
\end{equation}
where we denote $\bar{\rho}_{\psi}\equiv\bar{\rho}_{\psi}(\bar{x}_t,\bar{y}_t,\bar{z}_t,\bar{\tilde{z}}_t,\bar{\gamma}_{(t,e)},\bar{u}_t)$, for $\rho=\tilde{b},\sigma,\tilde{\sigma},l,g,h$ and $\psi=x,y,z,\tilde{z},\gamma,u$, for simplicity.

For any $v(\cdot)\in\mathcal{U}_{ad}[0,\infty]$, it is easy to check that \eqref{x1y1} and \eqref{mathcalZ1} admit unique solutions under assumptions {\bf (H0)} and {\bf (A1)--(A8)}, respectively. Moreover, similar as the above two lemmas, we have the following result.

\begin{lemma}\label{lemma34}
Let {\bf (H0)} and {\bf (A1)--(A8)} hold, there exists $\beta_2,\cdots,\beta_4>0$ such that for $k\geq1$,
\begin{equation*}
\begin{aligned}
&\sup_{t\in\mathbb{R}_+}\mathbb{E}\Big[e^{-\beta_2kt}|x_{(1,t)}|^{2k}\Big]<+\infty,\quad \mathbb{E}\bigg[\sup_{t\in\mathbb{R}_+} e^{-\beta_3kt}|y_{(1,t)}|^{2k}\bigg]<+\infty,\quad \mathbb{E}\bigg[\sup_{t\in\mathbb{R}_+}|\mathcal{Z}_{(1,t)}|^{2k}\bigg]<+\infty,\\
\end{aligned}
\end{equation*}
\begin{equation*}
\begin{aligned}
&\mathbb{E}\bigg[\bigg(\int_0^\infty e^{-\beta_4t}|z_{(1,t)}|^2dt\bigg)^k+\bigg(\int_0^\infty e^{-\beta_4t}|\tilde{z}_{(1,t)}|^2dt\bigg)^k\\
&\quad +\bigg(\int_0^\infty\int_{\mathcal{E}} e^{-\beta_4t}|\gamma_{(1,t,e)}|^2N(de,dt)\bigg)^k\bigg]<+\infty.
\end{aligned}
\end{equation*}
\end{lemma}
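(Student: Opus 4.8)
The plan is to exploit the linearity of the variational system \eqref{x1y1}--\eqref{mathcalZ1} and reduce the claimed bounds to the machinery already developed for Lemma \ref{lemma31}, combined with the moment bounds for the optimal-state solution recorded in Lemma \ref{lemma32}. The key structural observation is that \eqref{x1y1} has exactly the same form as the controlled system \eqref{iffbsdep01}, only with the coefficients replaced by the linearized (and, by \textbf{(A1)}, \textbf{(A5)}, bounded) ones $\bar{\tilde{b}}_x,\bar{\sigma}_x,\bar{\tilde{\sigma}}_x,\bar{l}_x$ and $\bar{g}_x,\bar{g}_y,\bar{g}_z,\bar{g}_{\tilde{z}},\bar{g}_\gamma$, while the ``zero-state'' inhomogeneities $\tilde{b}(0,0),\sigma(0,0),\tilde{\sigma}(0,0),l(0,0,e),g(x,0,0,0,0,0)$ appearing in Lemma \ref{lemma31} are now played by the forcing terms $\bar{\tilde{b}}_u v_t,\bar{\sigma}_u v_t,\bar{\tilde{\sigma}}_u v_t,\bar{l}_u v_t$ and $\bar{g}_x x_{(1,t)}+\bar{g}_u v_t$. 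Since $\bar{\tilde{b}}_u,\bar{\sigma}_u,\bar{\tilde{\sigma}}_u,\bar{l}_u,\bar{g}_u$ are all bounded, these forcing terms inherit the integrability of $v(\cdot)\in\mathcal{U}_{ad}[0,\infty]$, so the estimates of Lemma \ref{lemma31} apply to the linear system almost verbatim.

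First I would treat the forward equation for $x_{(1,\cdot)}$. Running the forward estimate of Lemma \ref{lemma31} with $x_{(1,0)}=0$, the right-hand side collapses to a constant multiple of $\mathbb{E}\int_0^\infty e^{-\beta_1 kt}|v_t|^{2k}dt$, which is finite by the admissibility condition \eqref{adcontrol}; this yields $\sup_{t}\mathbb{E}[e^{-\beta_2 kt}|x_{(1,t)}|^{2k}]<\infty$. Next, for the backward quadruple $(y_{(1)},z_{(1)},\tilde{z}_{(1)},\gamma_{(1)})$ I would invoke the backward estimate of Lemma \ref{lemma31}, in which the inhomogeneous generator term $\bar{g}_x x_{(1,t)}+\bar{g}_u v_t$ is controlled by combining the bound on $x_{(1,\cdot)}$ just obtained with the integrability of $v(\cdot)$ and the boundedness of $\bar{g}_x,\bar{g}_u$. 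This simultaneously delivers the $\sup_t$-bound on $y_{(1)}$ and the finiteness of the $k$-th moments of the stochastic integrals in $z_{(1)},\tilde{z}_{(1)},\gamma_{(1)}$.

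Finally, the bound for $\mathcal{Z}_{(1,\cdot)}$ calls for a separate It\^{o}/Burkholder--Davis--Gundy argument applied to $|\mathcal{Z}_{(1,t)}|^{2k}$. Equation \eqref{mathcalZ1} is a linear SDE in $\mathcal{Z}_{(1)}$ driven by $d\xi_t$, with bounded $\bar{h},\bar{h}_x,\bar{h}_u$ (by \textbf{(H0)}) and the decay factor $e^{-\frac{\beta}{2}t}$. Gronwall's lemma would control the homogeneous part coming from $e^{-\frac{\beta}{2}t}\bar{h}\mathcal{Z}_{(1,t)}$, while the forcing $e^{-\frac{\beta}{2}t}\bar{\mathcal{Z}}(\bar{h}_x x_{(1,t)}+\bar{h}_u v_t)$ is handled by H\"older's inequality, splitting the three factors $\bar{\mathcal{Z}}$ (whose moments are bounded by Lemma \ref{lemma32}), $x_{(1,\cdot)}$ (bounded by the first step), and $v(\cdot)$; the exponential weight $e^{-\frac{\beta}{2}t}$ then secures convergence of the resulting time integral over $[0,\infty)$.

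The main obstacle I anticipate is precisely this $\mathcal{Z}_{(1,\cdot)}$ estimate, since it couples three quantities -- $\bar{\mathcal{Z}}$, $x_{(1,\cdot)}$ and $\mathcal{Z}_{(1,\cdot)}$ itself -- and one must balance the H\"older exponents carefully against the available $2k$-th moment bounds while leaning on the decay $e^{-\frac{\beta}{2}t}$ for integrability in time. A secondary technical point is the passage between $\bar{\mathbb{E}}$, under which the admissibility integrability of $v$ in \eqref{adcontrol} is stated, and $\mathbb{E}$, which is reconciled through the density $\mathcal{Z}$ and its moment bounds from Lemma \ref{lemma32}; the freedom to take $\beta_1\geq0$ and arbitrary $k$ in \eqref{adcontrol} is what leaves enough room for the H\"older splitting in this change of measure.
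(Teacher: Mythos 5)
Your proposal is correct and follows essentially the same route as the paper, which states Lemma \ref{lemma34} without proof as ``similar as the above two lemmas,'' i.e.\ precisely your reduction of the linear variational FBSDEP to the a priori estimates of Lemma \ref{lemma31} with the forcing terms $\bar{\tilde{b}}_u v,\dots,\bar{g}_x x_{(1)}+\bar{g}_u v$ controlled by the admissibility of $v(\cdot)$ and the bound on $x_{(1)}$. Your separate B-D-G/Gronwall/H\"older treatment of $\mathcal{Z}_{(1)}$, splitting $\bar{\mathcal{Z}}$, $x_{(1)}$ and $v$ against the decay $e^{-\frac{\beta}{2}t}$, is exactly the argument the paper carries out for the analogous terms $\mathbf{C}_1$--$\mathbf{C}_5$ in the proof of Lemma \ref{lemma36}.
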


Next, we set
\begin{equation}
\tilde{\Phi}^\epsilon:=\frac{\Phi^\epsilon-\bar{\Phi}}{\epsilon}-\Phi_1,\ \text{for }\Phi=x,y,z,\tilde{z},\gamma,\mathcal{Z},
\end{equation}
and the following lemma is needed.

\begin{lemma}\label{lemma36}
Let {\bf (H0)} and {\bf (A1)--(A8)} hold, there exists $\beta_2,\beta_3,\beta_4>0$ such that for $k\geq1$,
\begin{equation}\label{variation estimate}
\begin{aligned}
&\lim_{\epsilon\rightarrow0}\sup_{t\in\mathbb{R}_+}\mathbb{E}\Big[e^{-\beta_2kt}|\tilde{x}_t^\epsilon|^{2k}\Big]=0,\quad
 \lim_{\epsilon\rightarrow0}\mathbb{E}\bigg[\sup_{t\in\mathbb{R}_+} e^{-\beta_3kt}|\tilde{y}_t^\epsilon|^{2k}\bigg]=0,\quad \lim_{\epsilon\rightarrow0}\mathbb{E}\bigg[\sup_{t\in\mathbb{R}_+}|\tilde{\mathcal{Z}}_t^\epsilon|^{2k}\bigg]=0,\\
&\lim_{\epsilon\rightarrow0}\mathbb{E}\bigg[\bigg(\int_0^\infty e^{-\beta_4t}|\tilde{z}_t^\epsilon|^2dt\bigg)^k
 +\bigg(\int_0^\infty e^{-\beta_4t}|\tilde{\tilde{z}}_t^\epsilon|^2dt\bigg)^k+\bigg(\int_0^\infty\int_{\mathcal{E}} e^{-\beta_4t}|\tilde{\gamma}_{(t,e)}^\epsilon|^2N(de,dt)\bigg)^k\bigg]=0,\\
\end{aligned}
\end{equation}
\end{lemma}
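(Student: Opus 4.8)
The plan is to derive the forward, $\mathcal{Z}$-, and backward equations solved by the error processes $\tilde{x}^\epsilon$, $\tilde{\mathcal{Z}}^\epsilon$, $\tilde{y}^\epsilon$ (together with $\tilde{z}^\epsilon,\tilde{\tilde{z}}^\epsilon,\tilde{\gamma}^\epsilon$), and then to apply the a priori estimates of Lemma \ref{lemma31} to bound their weighted norms by the norms of certain \emph{source terms} built from the differences of the coefficient derivatives along the perturbed and optimal trajectories; these source terms are then shown to vanish as $\epsilon\to0$ by dominated convergence, using the moment bounds of Lemmas \ref{lemma33}--\ref{lemma34}.

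First I would treat the forward part. Writing each coefficient difference via the integral mean value theorem, e.g.
$$\frac{\tilde{b}(x^\epsilon_t,u^\epsilon_t)-\tilde{b}(\bar{x}_t,\bar{u}_t)}{\epsilon}=\left(\int_0^1\tilde{b}_x^{\epsilon,\lambda}d\lambda\right)\frac{x^\epsilon_t-\bar{x}_t}{\epsilon}+\left(\int_0^1\tilde{b}_u^{\epsilon,\lambda}d\lambda\right)v_t,$$
with $\tilde{b}_x^{\epsilon,\lambda}:=\tilde{b}_x(\bar{x}_t+\lambda(x^\epsilon_t-\bar{x}_t),\bar{u}_t+\lambda\epsilon v_t)$, and inserting $\frac{x^\epsilon-\bar{x}}{\epsilon}=\tilde{x}^\epsilon+x_{(1,\cdot)}$, I subtract the variational equation \eqref{x1y1} to see that $\tilde{x}^\epsilon$ solves a linear SDEP with drift $\big(\int_0^1\tilde{b}_x^{\epsilon,\lambda}d\lambda\big)\tilde{x}^\epsilon_t$, zero initial value, and source terms of the form $\big(\int_0^1\tilde{b}_x^{\epsilon,\lambda}d\lambda-\bar{\tilde{b}}_x\big)x_{(1,t)}+\big(\int_0^1\tilde{b}_u^{\epsilon,\lambda}d\lambda-\bar{\tilde{b}}_u\big)v_t$ (with analogous sources in the $dW$, $d\xi$, $\tilde{N}$ channels). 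Applying the forward a priori estimate underlying Lemma \ref{lemma31} to this linear equation, with the sources playing the role of the inhomogeneous terms, yields $\sup_{t}\mathbb{E}\big[e^{-\beta_2kt}|\tilde{x}^\epsilon_t|^{2k}\big]\leq C_k\times(\text{weighted }2k\text{-norms of the sources})$.

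To conclude the first limit I would show the source norms tend to $0$. Since the derivatives are bounded by \textbf{(A1)}, each factor $\int_0^1\tilde{b}_x^{\epsilon,\lambda}d\lambda-\bar{\tilde{b}}_x$ is bounded; by Lemma \ref{lemma33}, $x^\epsilon\to\bar{x}$ in the weighted norm, hence a.e.\ along a subsequence, so continuity of the derivatives gives $\int_0^1\tilde{b}_x^{\epsilon,\lambda}d\lambda\to\bar{\tilde{b}}_x$ a.e. Combined with the finite $2k$-moment bounds for $x_{(1,\cdot)}$ and $v$ from Lemma \ref{lemma34} and \eqref{adcontrol}, the dominated convergence theorem forces each source norm to zero, proving $\lim_{\epsilon\to0}\sup_t\mathbb{E}[e^{-\beta_2kt}|\tilde{x}^\epsilon_t|^{2k}]=0$. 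The process $\tilde{\mathcal{Z}}^\epsilon$ is handled identically: subtracting \eqref{mathcalZ1} from the difference quotient of \eqref{mathcalZ} gives a linear SDE for $\tilde{\mathcal{Z}}^\epsilon$ driven by $d\xi$, whose source involves $h,h_x,h_u$ (bounded by \textbf{(H0)}), $\bar{\mathcal{Z}}$, and the already-controlled $\tilde{x}^\epsilon$, so the same estimate-plus-dominated-convergence argument delivers the third limit.

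Finally, the backward part is the main obstacle. Expanding the generator difference $g(x^\epsilon,y^\epsilon,z^\epsilon,\tilde{z}^\epsilon,\int_{\mathcal{E}}\gamma^\epsilon\nu(de),u^\epsilon)-g(\bar{x},\dots)$ by the mean value theorem in all six arguments and subtracting the linear BSDEP in \eqref{x1y1}, I find that $(\tilde{y}^\epsilon,\tilde{z}^\epsilon,\tilde{\tilde{z}}^\epsilon,\tilde{\gamma}^\epsilon)$ solves a linear BSDEP whose generator retains the monotone/Lipschitz structure of $g$ (so the backward estimate of Lemma \ref{lemma31}, in the spirit of Lemma \ref{lem22}, applies) plus source terms such as $\big(\int_0^1 g_x^{\epsilon,\lambda}d\lambda-\bar{g}_x\big)x_{(1,t)}+\cdots$, which additionally enter through the $x$-argument via the already-controlled $\tilde{x}^\epsilon$. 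The delicate point is that the generator difference couples $\tilde{y}^\epsilon$ to $(\tilde{z}^\epsilon,\tilde{\tilde{z}}^\epsilon,\tilde{\gamma}^\epsilon)$ through the small Lipschitz coefficients $K_1(t),\dots,K_4(t)$, so I must invoke \textbf{(A7)}--\textbf{(A8)} and the choice of $\beta_3,\beta_4$ to absorb the martingale-part contributions and close the estimate exactly as in the proof of Theorem \ref{the22}. Once this bounds $\mathbb{E}[\sup_t e^{-\beta_3kt}|\tilde{y}^\epsilon_t|^{2k}]$ and the weighted $z$-, $\tilde{z}$-, $\gamma$-integrals by the source norms together with $\sup_t\mathbb{E}[e^{-\beta_2kt}|\tilde{x}^\epsilon_t|^{2k}]$, the already-established forward limit and one further dominated-convergence argument on the remaining sources yield the last two limits, completing the proof.
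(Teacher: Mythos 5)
Your proposal follows essentially the same route as the paper's proof: the paper likewise writes $\tilde{x}^\epsilon$, $(\tilde{y}^\epsilon,\tilde{z}^\epsilon,\tilde{\tilde{z}}^\epsilon,\tilde{\gamma}^\epsilon)$ and $\tilde{\mathcal{Z}}^\epsilon$ as solutions of linear equations whose inhomogeneous sources are of the form $\big(\int_0^1\rho_\psi d\theta-\bar{\rho}_\psi\big)$ acting on $x_{(1,\cdot)},y_{(1,\cdot)},\dots,v$, applies the a priori estimates of Lemma \ref{lemma31} (together with B-D-G and a Gronwall absorption for $\tilde{\mathcal{Z}}^\epsilon$, whose equation also carries the term $\mathcal{Z}_{(1,t)}\big(h(x^\epsilon,u^\epsilon)-\bar{h}\big)$ you left implicit), and then drives the source norms to zero via H\"{o}lder splittings and the moment bounds of Lemmas \ref{lemma32}--\ref{lemma34}, feeding the already-established forward limit into the backward and $\tilde{\mathcal{Z}}^\epsilon$ estimates exactly as you propose. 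Your explicit dominated-convergence justification of the vanishing sources is a reasonable elaboration of the paper's terse ``$\rightarrow0$'' steps rather than a genuinely different method.
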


\begin{proof}
We first prove the first limit in \eqref{variation estimate}. We set
\begin{equation}
\begin{aligned}
\lambda_{\alpha}:&=\lambda_{\alpha}(\bar{x}+\theta\epsilon(\tilde{x}^\epsilon+x_1),\bar{y}+\theta\epsilon(\tilde{y}^\epsilon+y_1),\bar{z}+\theta\epsilon(\tilde{z}^\epsilon+z_1),\\
                 &\qquad \bar{\tilde{z}}+\theta\epsilon(\tilde{\tilde{z}}^\epsilon+\tilde{z}_1),\bar{\gamma}+\theta\epsilon(\tilde{\gamma}^\epsilon+\gamma_1),\bar{u}+\theta\epsilon v),\\
\bar{\lambda}_{\alpha}:&=\lambda_{\alpha}(\bar{x},\bar{y},\bar{z},\bar{\tilde{z}},\bar{\gamma},\bar{u}),\quad \text{for }\lambda=\tilde{b},\sigma,\tilde{\sigma},l,g\ \text{and }\alpha=x,y,z,\tilde{z},\gamma,u.
\end{aligned}
\end{equation}
And $\tilde{x}^\epsilon(\cdot)$ satisfies the following
\begin{equation*}
\begin{aligned}
d\tilde{x}^\epsilon_t&=\bigg[\int_0^1\tilde{b}_xd\theta\tilde{x}^\epsilon_t+\bigg(\int_0^1\tilde{b}_xd\theta-\bar{\tilde{b}}_x\bigg)x_{(1,t)}+\bigg(\int_0^1\tilde{b}_ud\theta-\bar{\tilde{b}}_u\bigg)v_t\bigg]dt\\
&\quad+\bigg[\int_0^1\sigma_xd\theta\tilde{x}^\epsilon_t+\bigg(\int_0^1\sigma_xd\theta-\bar{\sigma}_x\bigg)x_{(1,t)}+\bigg(\int_0^1\sigma_ud\theta-\bar{\sigma}_u\bigg)v_t\bigg]dW_t\\
&\quad+\bigg[\int_0^1\tilde{\sigma}_xd\theta\tilde{x}^\epsilon_t+\bigg(\int_0^1\tilde{\sigma}_xd\theta-\bar{\tilde{\sigma}}_x\bigg)x_{(1,t)}+\bigg(\int_0^1\tilde{\sigma}_ud\theta-\bar{\tilde{\sigma}}_u\bigg)v_t\bigg]d\xi_t\\
&\quad+\int_{\mathcal{E}}\bigg[\int_0^1l_xd\theta\tilde{x}^\epsilon_t+\bigg(\int_0^1l_xd\theta-\bar{l}_x\bigg)x_{(1,t)}+\bigg(\int_0^1l_ud\theta-\bar{l}_u\bigg)v_t\bigg]\tilde{N}(de,dt).
\end{aligned}
\end{equation*}
By the a priori estimate in Lemma \ref{lemma31}, we have
\begin{equation*}
\begin{aligned}
&\sup_{t\in\mathbb{R}_+}\mathbb{E}\big[e^{-\beta_2kt}|x_t^\epsilon|^{2k}\big]\\
&\leq C_k\bigg[\bigg(\int_0^\infty e^{-\beta_5t}\bigg(\mathbb{E}\bigg[\bigg(\int_0^1\tilde{b}_xd\theta-\bar{\tilde{b}}_x\bigg)x_{(1,t)}+\bigg(\int_0^1\tilde{b}_ud\theta-\bar{\tilde{b}}_u\bigg)v_t\bigg]^{2k}\bigg)^{\frac{1}{2k}}dt\bigg)^{2k}\\
&\quad+\bigg(\int_0^\infty e^{-\beta_5t}\bigg(\mathbb{E}\bigg[\bigg(\int_0^1\sigma_xd\theta-\bar{\sigma}_x\bigg)x_{(1,t)}+\bigg(\int_0^1\sigma_ud\theta-\bar{\sigma}_u\bigg)v_t\bigg]^{2k}\bigg)^{\frac{1}{k}}dt\bigg)^{k}\\
&\quad+\bigg(\int_0^\infty e^{-\beta_5t}\bigg(\mathbb{E}\bigg[\bigg(\int_0^1\tilde{\sigma}_xd\theta-\bar{\tilde{\sigma}}_x\bigg)x_{(1,t)}+\bigg(\int_0^1\tilde{\sigma}_ud\theta-\bar{\tilde{\sigma}}_u\bigg)v_t\bigg]^{2k}\bigg)^{\frac{1}{k}}dt\bigg)^{k}\\
&\quad+\bigg(\int_0^\infty e^{-\beta_5t}\bigg(\mathbb{E}\int_{\mathcal{E}}\bigg[\bigg(\int_0^1l_xd\theta-\bar{l}_x\bigg)x_{(1,t)}+\bigg(\int_0^1l_ud\theta-\bar{l}_u\bigg)v_t\bigg]^{2k}\nu(de)\bigg)^{\frac{1}{2k}}dt\bigg)^{2k}\\
\end{aligned}
\end{equation*}
\begin{equation}\label{ineq05}
\begin{aligned}
&\quad+\mathbb{E}\bigg(\int_0^\infty e^{-\beta_5kt}\int_{\mathcal{E}}\bigg[\bigg(\int_0^1l_xd\theta-\bar{l}_x\bigg)x_{(1,t)}+\bigg(\int_0^1l_ud\theta-\bar{l}_u\bigg)v_t\bigg]^{2k}\nu(de)dt\bigg)\bigg]\\
&:=\mathbf{A}_1+\mathbf{A}_2+\mathbf{A}_3+\mathbf{A}_4+\mathbf{A}_5.
\end{aligned}
\end{equation}
Note that
\begin{equation*}
\begin{aligned}
\mathbf{A}_1&\leq C_k\bigg(\int_0^\infty e^{-\beta_5kt}\bigg(\mathbb{E}\bigg(\int_0^1\tilde{b}_xd\theta-\bar{\tilde{b}}_x\bigg)^{2k}|x_{(1,t)}|^{2k}\bigg)dt\bigg)\\
&\quad +C_k\bigg(\int_0^\infty e^{-\beta_5kt}\bigg(\mathbb{E}\bigg(\int_0^1\tilde{b}_ud\theta-\bar{\tilde{b}}_u\bigg)^{2k}|v_t|^{2k}\bigg)dt\bigg)\\
&\leq C_k\bigg(\int_0^\infty e^{-\beta_5kt}\bigg(\mathbb{E}\bigg(\int_0^1\tilde{b}_xd\theta-\bar{\tilde{b}}_x\bigg)^{4k}\bigg)^{\frac{1}{2}}\bigg(\mathbb{E}|x_{(1,t)}|^{4k}\bigg)^{\frac{1}{2}}dt\bigg)\\
&\quad +C_k\bigg(\int_0^\infty e^{-\beta_5kt}\bigg(\mathbb{E}\bigg(\int_0^1\tilde{b}_ud\theta-\bar{\tilde{b}}_u\bigg)^{4k}\bigg)^{\frac{1}{2}}\bigg(\mathbb{E}|v_t|^{4k}\bigg)^{\frac{1}{2}}dt\bigg)\\
&\leq C_k\bigg(\sup_{t\in\mathbb{R}_+}\mathbb{E}\big[e^{-\beta_5kt}|x_{(1,t)}|^{4k}\big]\bigg)^{\frac{1}{2}}\bigg(\int_0^\infty e^{-\frac{\beta_5kt}{2}}\bigg(\mathbb{E}\bigg(\int_0^1\tilde{b}_xd\theta-\bar{\tilde{b}}_x\bigg)^{4k}\bigg)^{\frac{1}{2}}dt\bigg)\\
&\quad +C_k\bigg(\int_0^\infty e^{-\beta_5kt}\bigg(\mathbb{E}\bigg(\int_0^1\tilde{b}_ud\theta-\bar{\tilde{b}}_u\bigg)^{4k}\bigg)dt\bigg)^{\frac{1}{2}}\bigg(\mathbb{E}\int_0^\infty e^{-\beta_5kt}v^{4k}dt\bigg)^{\frac{1}{2}}\\
&\rightarrow0,\quad \text{as}\ \epsilon\rightarrow0,
\end{aligned}
\end{equation*}
and with the same technique, we have $\mathbf{A}_2\rightarrow0,\ \mathbf{A}_3\rightarrow0,\ \text{as}\ \epsilon\rightarrow0$,
\begin{equation*}
\begin{aligned}
\mathbf{A}_4&\leq C_k\bigg(\int_0^\infty e^{-\beta_5kt}\bigg(\mathbb{E}\int_{\mathcal{E}}\bigg(\int_0^1l_xd\theta-\bar{l}_x\bigg)^{2k}\nu(de)|x_{(1,t)}|^{2k}\bigg)dt\bigg)\\
&\quad +C_k\bigg(\int_0^\infty e^{-\beta_5kt}\bigg(\mathbb{E}\int_{\mathcal{E}}\bigg(\int_0^1l_ud\theta-\bar{l}_u\bigg)^{2k}\nu(de)|v_t|^{2k}\bigg)dt\bigg)\\
&\leq C_k\bigg(\int_0^\infty e^{-\beta_5kt}\bigg(\mathbb{E}\bigg(\int_{\mathcal{E}}\bigg(\int_0^1l_xd\theta-\bar{l}_x\bigg)^{2k}\nu(de)\bigg)^2\bigg)^{\frac{1}{2}}\bigg(\mathbb{E}|x_{(1,t)}|^{4k}\bigg)^{\frac{1}{2}}dt\bigg)\\
&\quad +C_k\bigg(\int_0^\infty e^{-\beta_5kt}\bigg(\mathbb{E}\bigg(\int_{\mathcal{E}}\bigg(\int_0^1l_ud\theta-\bar{l}_u\bigg)^{2k}\nu(de)\bigg)^2\bigg)^{\frac{1}{2}}\bigg(\mathbb{E}|v_t|^{4k}\bigg)^{\frac{1}{2}}dt\bigg)\\
&\leq C_k\bigg(\sup_{t\in\mathbb{R}_+}\mathbb{E}\big[e^{-\beta_5kt}|x_{(1,t)}|^{4k}\big]\bigg)^{\frac{1}{2}}\int_0^\infty e^{-\frac{\beta_5kt}{2}}\bigg(\mathbb{E}\bigg(\int_{\mathcal{E}}\bigg(\int_0^1l_xd\theta-\bar{l}_x\bigg)^{2k}\nu(de)\bigg)^2\bigg)^{\frac{1}{2}}dt\\
&\quad +C_k\bigg(\int_0^\infty e^{-\beta_5kt}\bigg(\mathbb{E}\bigg(\int_{\mathcal{E}}\bigg(\int_0^1l_ud\theta-\bar{l}_u\bigg)^{2k}\nu(de)\bigg)^2\bigg)dt\bigg)^{\frac{1}{2}}
 \bigg(\mathbb{E}\int_0^\infty e^{-\beta_5kt}|v_t|^{4k}dt\bigg)^{\frac{1}{2}}\\
&\rightarrow0,\quad \text{as}\ \epsilon\rightarrow0,
\end{aligned}
\end{equation*}
and similarly, $\mathbf{A}_5\rightarrow0, \text{as}\ \epsilon\rightarrow0$. Therefore, by \eqref{ineq05}, the first limit in \eqref{variation estimate} holds.

Next, since
\begin{equation*}
\begin{aligned}
-d\tilde{y}^\epsilon_t&=\bigg[\int_0^1g_xd\theta\tilde{x}_t^\epsilon+\int_0^1g_yd\theta\tilde{y}^\epsilon_t+\int_0^1g_zd\theta\tilde{z}^\epsilon_t+\int_0^1g_{\tilde{z}}d\theta\tilde{\tilde{z}}^\epsilon_t
 +\int_0^1g_\gamma d\theta\int_{\mathcal{E}}\tilde{\gamma}^\epsilon_{(t,e)}\nu(de)\\
&\quad +\bigg(\int_0^1g_xd\theta-\bar{g}_x\bigg)x_{(1,t)}+\bigg(\int_0^1g_yd\theta-\bar{g}_y\bigg)y_{(1,t)}+\bigg(\int_0^1g_zd\theta-\bar{g}_z\bigg)z_{(1,t)}\\
&\quad +\bigg(\int_0^1g_{\tilde{z}}d\theta-\bar{g}_{\tilde{z}}\bigg)\tilde{z}_{(1,t)}+\bigg(\int_0^1g_\gamma d\theta-\bar{g}_\gamma\bigg)\int_{\mathcal{E}}\gamma_{(1,t,e)}\nu(de)\\
&\quad+\bigg(\int_0^1g_ud\theta-\bar{g}_u\bigg)v_t\bigg]dt-\tilde{z}^\epsilon_tdW_t-\tilde{\tilde{z}}^\epsilon_td\xi_t-\int_{\mathcal{E}}\tilde{\gamma}^\epsilon_{(t,e)}\tilde{N}(de,dt),
\end{aligned}
\end{equation*}
by Lemma \ref{lemma31}, we have
\begin{equation}\label{ineq06}
\begin{aligned}
&\mathbb{E}\bigg[\sup_{t\in\mathbb{R}_+} e^{-\beta_3kt}|\tilde{y}_t^\epsilon|^{2k}+\bigg(\int_0^\infty e^{-\beta_4t}|\tilde{z}^\epsilon_t|^2dt\bigg)^k+\bigg(\int_0^\infty e^{-\beta_4t}|\tilde{\tilde{z}}^\epsilon_t|^2dt\bigg)^k\\
&\quad +\bigg(\int_0^\infty\int_{\mathcal{E}} e^{-\beta_4t}|\tilde{\gamma}^\epsilon_{(t,e)}|^2N(de,dt)\bigg)^k\bigg]\\
&\leq C_k\bigg[\mathbb{E}\bigg(\int_0^\infty e^{-\beta_0t}\bigg[\int_0^1g_xd\theta\tilde{x}^\epsilon_t+\bigg(\int_0^1g_xd\theta-\bar{g}_x\bigg)x_{(1,t)}+\bigg(\int_0^1g_yd\theta-\bar{g}_y\bigg)y_{(1,t)}\\
&\qquad +\bigg(\int_0^1g_zd\theta-\bar{g}_z\bigg)z_{(1,t)}+\bigg(\int_0^1g_{\tilde{z}}d\theta-\bar{g}_{\tilde{z}}\bigg)\tilde{z}_{(1,t)}\\
&\qquad +\bigg(\int_0^1g_\gamma d\theta-\bar{g}_\gamma\bigg)\int_{\mathcal{E}}\gamma_{(1,t,e)}\nu(de)+\bigg(\int_0^1g_ud\theta-\bar{g}_u\bigg)v_t\bigg]dt\bigg)^{2k}\bigg]\\
&:=\mathbf{B}_1+\mathbf{B}_2+\mathbf{B}_3+\mathbf{B}_4+\mathbf{B}_5+\mathbf{B}_6+\mathbf{B}_7.
\end{aligned}
\end{equation}
Note that
\begin{equation*}
\mathbf{B}_1\leq C_k\mathbb{E}\bigg(\int_0^\infty e^{-\beta_0kt}|\tilde{x}^\epsilon_t|^{2k} dt\bigg)
 \leq C_k\sup_{t\in\mathcal{R}_+}\mathbb{E}\bigg[e^{-\frac{\beta_0kt}{2}}|\tilde{x}^\epsilon_t|^{2k}\bigg]\bigg(\int_0^\infty e^{-\frac{\beta_0kt}{2}} dt\bigg)\rightarrow0,\quad \text{as}\ \epsilon\rightarrow0,
\end{equation*}
\begin{equation*}
\begin{aligned}
\mathbf{B}_2&\leq \mathbb{E}\bigg(\int_0^\infty e^{-\beta_0kt}|x_{(1,t)}|^{2k}dt\bigg)\bigg(\int_0^\infty \bigg(e^{-\frac{\beta_0t}{2}}\bigg(\int_0^1g_xd\theta-\bar{g}_x\bigg)\bigg)^{\frac{2k}{2k-1}}dt\bigg)^{2k-1}\\
&\leq C_k\bigg(\sup_{t\in\mathbb{R}_+}\mathbb{E}\Big[e^{-\frac{\beta_0kt}{2}}|x_{(1,t)}|^{4k}\Big]\bigg)^{\frac{1}{2}}
 \bigg(\mathbb{E}\bigg(\int_0^\infty \bigg(e^{-\frac{\beta_0t}{2}}\bigg(\int_0^1g_xd\theta-\bar{g}_x\bigg)\bigg)^{\frac{2k}{2k-1}}dt\bigg)^{4k-2}\bigg)^{\frac{1}{2}}\\
&\rightarrow0,\quad \text{as}\ \epsilon\rightarrow0,
\end{aligned}
\end{equation*}
\begin{equation*}
\begin{aligned}
\mathbf{B}_3&\leq \mathbb{E}\bigg[\sup_{t\in\mathcal{R}_+}e^{-\beta_0kt}|y_{(1,t)}|^{2k}\bigg(\int_0^\infty e^{-\frac{\beta_0t}{2}}\bigg(\int_0^1g_yd\theta-\bar{g}_y\bigg)dt\bigg)^{2k}\bigg]\\
&\leq \bigg(\mathbb{E}\bigg(\sup_{t\in\mathcal{R}_+}e^{-2\beta_0kt}|y_{(1,t)}|^{4k}\bigg)\bigg)^{\frac{1}{2}}\bigg(\mathbb{E}
 \bigg(\int_0^\infty e^{-\frac{\beta_0t}{2}}\bigg(\int_0^1g_yd\theta-\bar{g}_y\bigg)dt\bigg)^{4k}\bigg)^{\frac{1}{2}}\rightarrow0,\quad \text{as}\ \epsilon\rightarrow0.
\end{aligned}
\end{equation*}
\begin{equation*}
\begin{aligned}
\mathbf{B}_4&\leq \mathbb{E}\bigg[\bigg(\int_0^\infty e^{-\beta_0t}|z_{(1,t)}|^2dt\bigg)^{k}\bigg(\int_0^\infty e^{-\beta_0t}\bigg(\int_0^1g_zd\theta-\bar{g}_z\bigg)^2dt\bigg)^{k}\bigg]\\
&\leq \bigg(\mathbb{E}\bigg(\int_0^\infty e^{-\beta_0t}|z_{(1,t)}|^2dt\bigg)^{2k}\bigg)^{\frac{1}{2}}\bigg(\mathbb{E}\bigg(\int_0^\infty e^{-\beta_0t}
 \bigg(\int_0^1g_zd\theta-\bar{g}_z\bigg)^2dt\bigg)^{2k}\bigg)^{\frac{1}{2}}\rightarrow0,\quad \text{as}\ \epsilon\rightarrow0.
\end{aligned}
\end{equation*}
Similarly, we have
$\mathbf{B}_5\rightarrow0,\ \text{as}\ \epsilon\rightarrow0$. And
\begin{equation*}
\begin{aligned}
\mathbf{B}_6&\leq \mathbb{E}\bigg[\bigg(\int_0^\infty e^{-\beta_0t}\int_{\mathcal{E}}\gamma^2_{(1,t,e)}\nu(de)dt\bigg)^{k}\bigg(\int_0^\infty e^{-\beta_0t}\bigg(\int_0^1g_\gamma d\theta-\bar{g}_\gamma\bigg)^2dt\bigg)^{k}\bigg]\\
&\leq \bigg(\mathbb{E}\bigg(\int_0^\infty\int_{\mathcal{E}} e^{-\beta_0t}\gamma_{(1,t,e)}^2N(de,dt)\bigg)^{2k}\bigg)^{\frac{1}{2}}
 \bigg(\mathbb{E}\bigg(\int_0^\infty e^{-\beta_0t}\bigg(\int_0^1g_\gamma d\theta-\bar{g}_\gamma\bigg)^2dt\bigg)^{2k}\bigg)^{\frac{1}{2}}\\
&\rightarrow0,\quad \text{as}\ \epsilon\rightarrow0,
\end{aligned}
\end{equation*}
\begin{equation*}
\begin{aligned}
\mathbf{B}_7&\leq \mathbb{E}\bigg[\bigg(\int_0^\infty e^{-\beta_0kt}|v_t|^{2k}dt\bigg)\bigg(\int_0^\infty \bigg(e^{-\frac{\beta_0t}{2}}\bigg(\int_0^1g_u d\theta-\bar{g}_u\bigg)\bigg)^{\frac{2k}{2k-1}}dt\bigg)^{2k-1}\bigg]\\
&\leq C_k\bigg(\mathbb{E}\int_0^\infty e^{-\beta_0kt}|v_t|^{4k}dt\bigg)^{\frac{1}{2}}
 \bigg(\mathbb{E}\bigg(\int_0^\infty \bigg(e^{-\frac{\beta_0t}{2}}\bigg(\int_0^1g_u d\theta-\bar{g}_u\bigg)\bigg)^{\frac{2k}{2k-1}}dt\bigg)^{4k-2}\bigg)^{\frac{1}{2}}\\
&\rightarrow0,\quad \text{as}\ \epsilon\rightarrow0.
\end{aligned}
\end{equation*}
Therefore, by \eqref{ineq06}, the second and fourth limits in \eqref{variation estimate} hold.

Then we prove the third limit in \eqref{variation estimate}. Since $\tilde{\mathcal{Z}}^\epsilon(\cdot)$ satisfies the following equation
\begin{equation*}
\begin{aligned}
d\tilde{\mathcal{Z}}_t^\epsilon&=e^{-\frac{\beta}{2}t}\bigg[\tilde{\mathcal{Z}}^\epsilon_t h(x^\epsilon,u^\epsilon)+\mathcal{Z}_{(1,t)}\big(h(x^\epsilon,u^\epsilon)-\bar{h}\big)
                                +\bar{\mathcal{Z}}_t\int_0^1h_xd\theta\tilde{x}^\epsilon_t\\
                               &\qquad +\bar{\mathcal{Z}}_t\bigg(\int_0^1h_xd\theta-\bar{h}_x\bigg)x_{(1,t)}+\bar{\mathcal{Z}}_t\bigg(\int_0^1h_ud\theta-\bar{h}_u\bigg)v_t\bigg]d\xi_t,
\end{aligned}
\end{equation*}
by B-D-G's inequality, we have
\begin{equation}\label{ineq07}
\begin{aligned}
&\mathbb{E}\bigg[\sup_{t\in\mathbb{R}_+}|\tilde{\mathcal{Z}}_t^\epsilon|^{2k}\bigg]\leq \mathbb{E}\bigg(\int_0^\infty e^{-\beta t}|\tilde{\mathcal{Z}}_t^\epsilon|^2 |h(x^\epsilon,u^\epsilon)|^2dt\bigg)^k
 +\mathbb{E}\bigg(\int_0^\infty e^{-\beta t}|\mathcal{Z}_{(1,t)}|^2\big(h(x^\epsilon,u^\epsilon)-\bar{h}\big)^2dt\bigg)^k\\
&\quad +\mathbb{E}\bigg(\int_0^\infty e^{-\beta t}|\bar{\mathcal{Z}}_t|^2\bigg|\int_0^1h_xd\theta \bigg|^2|\tilde{x}^\epsilon_t|^2dt\bigg)^k
 +\mathbb{E}\bigg(\int_0^\infty e^{-\beta t}|\bar{\mathcal{Z}}_t|^2\bigg(\int_0^1h_xd\theta-\bar{h}_x\bigg)^2|x_{(1,t)}|^2dt\bigg)^k\\
&\quad +\mathbb{E}\bigg(\int_0^\infty e^{-\beta t}|\bar{\mathcal{Z}}_t|^2\bigg(\int_0^1h_ud\theta-\bar{h}_u\bigg)^2|v_t|^2dt\bigg)^k:=\mathbf{C}_1+\mathbf{C}_2+\mathbf{C}_3+\mathbf{C}_4+\mathbf{C}_5.
\end{aligned}
\end{equation}
Note that
\begin{equation*}
\mathbf{C}_1\leq C_k\mathbb{E}\int_0^\infty e^{-\frac{\beta kt}{2}}|\tilde{\mathcal{Z}}_t^\epsilon|^{2k} dt
\leq C_k\int_0^\infty e^{-\frac{\beta kt}{2}}\mathbb{E}\Big[\sup_{t\in\mathbb{R}_+}|\tilde{\mathcal{Z}}_t^\epsilon|^{2k}\Big] dt,
\end{equation*}
thus by Lemma \ref{lemma32} and Lemma \ref{lemma34}, we obtain
\begin{equation*}
\begin{aligned}
\mathbf{C}_2&\leq \mathbb{E}\bigg[\sup_{t\in\mathbb{R}_+}|\mathcal{Z}_{(1,t)}|^{2k}\bigg(\int_0^\infty e^{-\beta t}\big(h(x^\epsilon,u^\epsilon)-\bar{h}\big)^2dt\bigg)^k\bigg]\\
&\leq \bigg(\mathbb{E}\Big[\sup_{t\in\mathbb{R}_+}|\mathcal{Z}_{(1,t)}|^{4k}\Big]\bigg)^{\frac{1}{2}}
 \bigg(\mathbb{E}\bigg(\int_0^\infty e^{-\beta t}\big(h(x^\epsilon,u^\epsilon)-\bar{h}\big)^2dt\bigg)^{2k}\bigg)^{\frac{1}{2}}\rightarrow0,\quad \text{as}\ \epsilon\rightarrow0,
\end{aligned}
\end{equation*}
\begin{equation*}
\begin{aligned}
\mathbf{C}_3&\leq C_k\mathbb{E}\int_0^\infty e^{-\frac{\beta kt}{2}}|\bar{\mathcal{Z}}_t|^{2k}|\tilde{x}^\epsilon_t|^{2k}dt
\leq C_k\int_0^\infty e^{-\frac{\beta kt}{2}}(\mathbb{E}|\bar{\mathcal{Z}}_t|^{4k})^{\frac{1}{2}}(\mathbb{E}|\tilde{x}^\epsilon_t|^{4k})^{\frac{1}{2}}dt\\
&\leq C_k\bigg(\mathbb{E}\Big[\sup_{t\in\mathbb{R}_+}|\bar{\mathcal{Z}}_t|^{4k}\Big]\bigg)^{\frac{1}{2}}
 \int_0^\infty e^{-\frac{\beta kt}{4}}dt\bigg(\sup_{t\in\mathbb{R}_+}\mathbb{E}\Big[e^{-\frac{\beta kt}{2}}|\tilde{x}^\epsilon_t|^{4k}\Big]\bigg)^{\frac{1}{2}}\rightarrow0,\quad \text{as}\ \epsilon\rightarrow0,\\
\mathbf{C}_4&\leq C_k\mathbb{E}\bigg(\int_0^\infty e^{-\frac{\beta kt}{2}}|\bar{\mathcal{Z}}_t|^{2k}\bigg(\int_0^1h_xd\theta-\bar{h}_x\bigg)^{2k}|x_{(1,t)}|^{2k}dt\bigg)\\
&\leq C_k\bigg(\mathbb{E}\Big[\sup_{t\in\mathbb{R}_+}|\bar{\mathcal{Z}}_t|^{8k}\Big]\bigg)^\frac{1}{4}
 \bigg(\int_0^\infty e^{-\frac{\beta kt}{4}}\bigg(\mathbb{E}\bigg(\int_0^1h_xd\theta-\bar{h}_x\bigg)^{8k}\bigg)^{\frac{1}{4}}dt\bigg)\\
&\qquad \times\bigg(\sup_{t\in\mathbb{R}_+}\mathbb{E}\Big[e^{-\frac{\beta kt}{2}}|x_{(1,t)}|^{4k}\Big]\bigg)^{\frac{1}{2}}\rightarrow0,\quad \text{as}\ \epsilon\rightarrow0,\\
\mathbf{C}_5&\leq \mathbb{E}\bigg[\sup_{t\in\mathbb{R}_+}|\bar{\mathcal{Z}}_t|^{2k}\bigg(\int_0^\infty e^{-\beta t}\bigg(\int_0^1h_ud\theta-\bar{h}_u\bigg)^2|v_t|^2dt\bigg)^k\bigg]\\
&\leq \bigg(\mathbb{E}\Big[\sup_{t\in\mathbb{R}_+}|\bar{\mathcal{Z}}_t|^{4k}\Big]\bigg)^{\frac{1}{2}}
 \bigg(\mathbb{E}\bigg(\int_0^\infty e^{-\beta t}\bigg(\int_0^1h_ud\theta-\bar{h}_u\bigg)^2|v_t|^2dt\bigg)^{2k}\bigg)^{\frac{1}{2}}\\
&\leq C_k\bigg(\mathbb{E}\int_0^\infty e^{-\beta kt}|v_t|^{4k}dt
 \bigg(\int_0^\infty \bigg(e^{-\frac{\beta t}{2}}\bigg(\int_0^1h_ud\theta-\bar{h}_u\bigg)^2\bigg)^{\frac{2k}{2k-1}}dt\bigg)^{2k-1}\bigg)^{\frac{1}{2}}\\
&\leq C_k\bigg(\mathbb{E}\int_0^\infty e^{-\beta kt}|v_t|^{8k}dt\bigg)^{\frac{1}{4}}
 \bigg(\mathbb{E}\bigg(\int_0^\infty \bigg(e^{-\frac{\beta t}{2}}\bigg(\int_0^1h_ud\theta-\bar{h}_u\bigg)^2\bigg)^{\frac{2k}{2k-1}}dt\bigg)^{4k-2}\bigg)^{\frac{1}{4}}\\
&\rightarrow0,\quad \text{as}\ \epsilon\rightarrow0.
\end{aligned}
\end{equation*}
From \eqref{ineq07}, the third limit in \eqref{variation estimate} holds by Gronwall's inequality. The proof is complete.
\end{proof}

\begin{remark}
In fact, we can notice that the higher order estimates for $x,y,z,\tilde{z},\gamma,\mathcal{Z}$ are needed besides in $L^2$-space (i.e., $k=1$) in the previous lemmas of this subsection. Indeed, in the partially observed forward-backward stochastic system, one significant difference is the appearance of the Radon-Nikodym derivative $\mathcal{Z}$ defined in \eqref{RN} which satisfies a new ``state equation" when we make the measure transformation, and which increases the difficulty and complexity of estimates. Therefore, we need higher order estimates for $x,y,z,\tilde{z},\gamma,\mathcal{Z}$ and its perturbed terms. We also notice that the high order estimate for the forward state variable $x$ are given in \cite{WWX13}, but in which the estimates with order $(p=2k=2)$ for the backward state variables $(y,z)$ are not enough (too low) to support the derivation of the variational inequality (similar situation can be also seen in \cite{Wu10}, \cite{ZXL18}). However, we should stress that it could not be regarded as a mistake but some repairable  shortcomings. In fact, these small deficiencies can also be corrected and it is easy to check that the higher order estimates for $(y,z)$ can also be proved by the given condition in \cite{WWX13}(or \cite{Wu10}, \cite{ZXL18}). Thus, rectifying these deficiencies can not influence their main results. However, in our infinite horizon case, we have to give the high order estimates $(p=2k>2)$ for all the variables $(x,y,z,\tilde{z},\gamma)$ and $\mathcal{Z}$ rigorously. Otherwise, we fail to obtain the desired variational inequality.
\end{remark}

\begin{remark}
Besides the rigorous required high order estimates, we have to give the suitable norm for the estimates. Note that, especially, the norm for backward variable $y$ need that $\sup$ is in the expectation $\mathbb{E}$ in a strong form, however, the norm for forward variable $x$ need that $\sup$ is out of the expectation $\mathbb{E}$ in a weak form. More importantly, it can not be interchanged. Another find is the appearance of exponential discount $e^{-\beta t}$ in the estimates, and we can prove that these estimates hold for different values of $\beta_i (i=0,1,2,3,4,5)$ and all the individual required $\beta_i$ are less than or equal to the discount factor which make all the lemmas hold.
\end{remark}

\subsection{Variational inequality}

We have the following variational inequality, which is needed for the ergodic maximum principle.

\begin{lemma}\label{variational ineq}
Let {\bf (H0)} and {\bf (A1)--(A8)} hold, then we have
\begin{equation}\label{variation inequality}
\begin{aligned}
&\mathbb{E}\big[\phi_y(\bar{y}_0)y_{(1,0)}\big]
 +\mathbb{E}\int_0^\infty e^{-\beta t}\bigg(\bar{\mathcal{Z}}_t\bar{f}_xx_{(1,t)}+\bar{\mathcal{Z}}_t\bar{f}_yy_{(1,t)}+\bar{\mathcal{Z}}_t\bar{f}_zz_{(1,t)}+\bar{\mathcal{Z}}_t\bar{f}_{\tilde{z}}\tilde{z}_{(1,t)}\\
&\qquad +\bar{\mathcal{Z}}_t\bar{f}_{\gamma}\int_{\mathcal{E}}\gamma_{(1,t,e)}\nu(de)+\bar{f}\mathcal{Z}_{(1,t)}+\bar{\mathcal{Z}}_t\bar{f}_uv_t\bigg)dt\geq0,\quad \text{for any }v(\cdot)\in\mathcal{U}_{ad}[0,\infty],
\end{aligned}
\end{equation}
where the discount factor $\beta\geq\max\limits_i\{\beta_i\}$ and $\beta_i$ are all the individual discount factors appeared in the above estimates.
\end{lemma}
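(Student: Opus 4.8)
The plan is to exploit the optimality of $\bar u(\cdot)$ together with the convexity of the control domain $U$. For any $v(\cdot)\in\mathcal{U}_{ad}[0,\infty]$ and $\epsilon\in(0,1)$, the perturbed control $u^\epsilon(\cdot)=\bar u(\cdot)+\epsilon v(\cdot)$ again lies in $\mathcal{U}_{ad}[0,\infty]$, since $U$ is convex, $u^\epsilon$ stays $\mathcal{F}_t^\xi$-adapted, and $|u^\epsilon|^{2k}\leq C(|\bar u|^{2k}+|v|^{2k})$ preserves the integrability built into \eqref{adcontrol}. Optimality of $\bar u$ then gives $J(u^\epsilon)\geq J(\bar u)$, whence
\[
\liminf_{\epsilon\downarrow0}\frac{J(u^\epsilon)-J(\bar u)}{\epsilon}\geq0.
\]
The whole proof reduces to identifying this one-sided G\^{a}teaux derivative with the left-hand side of \eqref{variation inequality}.

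First I would split $J(u^\epsilon)-J(\bar u)$ into the terminal part $\mathbb{E}[\phi(y_0^\epsilon)-\phi(\bar y_0)]$ and the running part $\mathbb{E}\int_0^\infty e^{-\beta t}[\mathcal{Z}_t^\epsilon f(x_t^\epsilon,\dots,u_t^\epsilon)-\bar{\mathcal{Z}}_t\bar f]\,dt$. For the terminal part, a first-order Taylor expansion together with $\tilde y_0^\epsilon\to0$ (Lemma \ref{lemma36}), the continuity of $\phi_y$, and the integrability from Lemma \ref{lemma32} yields $\epsilon^{-1}\mathbb{E}[\phi(y_0^\epsilon)-\phi(\bar y_0)]\to\mathbb{E}[\phi_y(\bar y_0)y_{(1,0)}]$. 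For the running part I would insert and subtract $\bar{\mathcal{Z}}_t f(x_t^\epsilon,\dots,u_t^\epsilon)$ to write
\[
\mathcal{Z}_t^\epsilon f(x_t^\epsilon,\dots)-\bar{\mathcal{Z}}_t\bar f=(\mathcal{Z}_t^\epsilon-\bar{\mathcal{Z}}_t)\,f(x_t^\epsilon,\dots)+\bar{\mathcal{Z}}_t\bigl(f(x_t^\epsilon,\dots)-\bar f\bigr),
\]
then expand the increment of $f$ as $\epsilon\int_0^1[f_x(x_{(1,t)}+\tilde x_t^\epsilon)+\dots+f_u v_t]\,d\theta$, with the derivatives evaluated at the convex combinations $\bar x+\theta\epsilon(x_{(1,t)}+\tilde x_t^\epsilon)$, and use $\mathcal{Z}_t^\epsilon-\bar{\mathcal{Z}}_t=\epsilon(\mathcal{Z}_{(1,t)}+\tilde{\mathcal{Z}}_t^\epsilon)$, so that after dividing by $\epsilon$ the candidate limit integrand is exactly the one appearing in \eqref{variation inequality}.

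The core of the argument is the passage to the limit under $\mathbb{E}\int_0^\infty e^{-\beta t}\,dt$. I would treat each resulting term by replacing $\int_0^1 f_\alpha\,d\theta$ by $\bar f_\alpha$ and each difference quotient $\epsilon^{-1}(\Phi^\epsilon-\bar\Phi)$ by $\Phi_1$, writing the error as $(\int_0^1 f_\alpha\,d\theta-\bar f_\alpha)\Phi_1+(\int_0^1 f_\alpha\,d\theta)\tilde\Phi^\epsilon$. The second error vanishes because $\tilde\Phi^\epsilon\to0$ in the norms of Lemma \ref{lemma36}; the first vanishes because the arguments converge by Lemma \ref{lemma33}, so $\int_0^1 f_\alpha\,d\theta\to\bar f_\alpha$, while the $\Phi_1$-factors are integrable by Lemma \ref{lemma34}. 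Since $f$ has quadratic and its derivatives linear growth, and since $\mathcal{Z}$ enters multiplicatively, each such product must be controlled by H\"older's inequality, splitting it into a factor built from $\Phi_1$ or $\tilde\Phi^\epsilon$, a factor built from $\mathcal{Z}$, and a factor carrying a derivative of $f$; here the high-order ($k>1$) estimates of Lemmas \ref{lemma31}, \ref{lemma32} and \ref{lemma34}, together with the uniform bounds on $\bar{\mathcal{Z}}$, $\mathcal{Z}_{(1,t)}$ and $\tilde{\mathcal{Z}}^\epsilon$, are indispensable.

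The main obstacle is precisely this uniform-integrability step on the infinite horizon: unlike on a compact interval one cannot bound each term pointwise in $t$, and one must keep the various exponential weights compatible. This is why $\beta\geq\max_i\{\beta_i\}$ is imposed—each individual term is integrable only against its own discount $e^{-\beta_i t}$ produced by the corresponding estimate, and taking the global discount $\beta$ no smaller than every $\beta_i$ lets one write $e^{-\beta t}=e^{-\beta_i t}e^{-(\beta-\beta_i)t}$, where the first factor supplies the integrability coming from the estimate and the second supplies an extra decaying factor that absorbs the H\"older cross terms. Once each term is dominated by an $\epsilon$-independent integrable majorant and shown to converge, dominated convergence permits the limit, the limit equals the left-hand side of \eqref{variation inequality}, and the one-sided derivative inequality completes the proof.
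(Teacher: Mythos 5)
Your proposal is correct and takes essentially the same route as the paper's Appendix proof: convex perturbation and optimality give $\epsilon^{-1}[J(u^\epsilon)-J(\bar u)]\geq0$, the difference is split into the terminal term $\mathbb{E}[\phi(y_0^\epsilon)-\phi(\bar y_0)]$ and the running term, each is expanded to first order, and the limit is taken term by term via H\"older's inequality and the high-order estimates of Lemmas \ref{lemma31}--\ref{lemma36}, with the various discount rates reconciled through $\beta\geq\max_i\{\beta_i\}$. The only (immaterial) difference is algebraic: the paper expands $F=\mathcal{Z}f$ jointly in all its arguments including $\mathcal{Z}$, whereas you first split off the density increment as $(\mathcal{Z}^\epsilon-\bar{\mathcal{Z}})f(x^\epsilon,\dots)+\bar{\mathcal{Z}}\bigl(f(x^\epsilon,\dots)-\bar f\bigr)$ and use $\mathcal{Z}^\epsilon-\bar{\mathcal{Z}}=\epsilon(\mathcal{Z}_{(1,t)}+\tilde{\mathcal{Z}}^\epsilon_t)$; both decompositions lead to the same limit computations and the same use of the growth condition {\bf (H1)}.
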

The detailed proof is given in the Appendix for the reader's convenience.

\subsection{Infinite horizon adjoint equations and ergodic maximum principle}

In this section, we give the ergodic maximum principle, in which we also need guarantee the existence and uniqueness of solutions to infinite horizon forward-backward adjoint SDEP.

We introduce the following three adjoint equations. For $t\in[0,\infty)$,
\begin{equation}\label{fadjoint}
\left\{
\begin{aligned}
dp_t&=\big(\bar{\mathcal{Z}}_t\bar{f}_y+\bar{g}_yp_t+\beta p_t\big)dt+\big(\bar{\mathcal{Z}}_t\bar{f}_z+\bar{g}_zp_t\big)dW_t+\big(\bar{\mathcal{Z}}_t\bar{f}_{\tilde{z}}+\bar{g}_{\tilde{z}}p_t\big)d\xi_t\\
    &\quad +\int_{\mathcal{E}}\big(\bar{\mathcal{Z}}_t\bar{f}_\gamma+\bar{g}_\gamma p_t\big)\tilde{N}(de,dt),\\
p_0&=\phi_y(\bar{y}_0),
\end{aligned}
\right.
\end{equation}
\begin{equation}\label{badjoint1}
-dQ_t=\Big(\bar{f}+e^{-\frac{\beta}{2}t}\bar{h}\tilde{M}_t-\beta Q_t\Big)dt-\tilde{M}_td\xi_t,
\end{equation}
\begin{equation}\label{badjoint2}
\begin{aligned}
-dq_t&=\bigg(\bar{\mathcal{Z}}_t\bar{f}_x+\bar{g}_xp_t+\bar{\tilde{b}}_xq_t+\bar{\sigma}_xm_t+\bar{\tilde{\sigma}}_x\tilde{m}_t+\int_{\mathcal{E}}\bar{l}_x n_{(t,e)}\nu(de)
      -\beta q_t+e^{-\frac{\beta}{2}t}\bar{\mathcal{Z}}_t\bar{h}_x\tilde{M}_t\bigg)dt\\
     &\quad -m_tdW_t-\tilde{m}_td\xi_t-\int_{\mathcal{E}}n_{(t,e)}\tilde{N}(de,dt),
\end{aligned}
\end{equation}
whose solutions are $p(\cdot)\in L^{2,-\beta}_{\mathcal{F}}(0,\infty;\mathbb{R})$, $(Q(\cdot),\tilde{M}(\cdot))\in L^{2,-\beta}_{\mathcal{F}}(0,\infty;\mathbb{R}^2)$, and $(q(\cdot),m(\cdot),\tilde{m}(\cdot),\\n(\cdot,\cdot))\in L^{2,-\beta}_{\mathcal{F}}(0,\infty;\mathbb{R}^3)\times L^{2,-\beta}_{\mathcal{F},\nu}(0,\infty;\mathbb{R})$, respectively.

First, we can give the uniquely solvability of forward adjoint SDEP \eqref{fadjoint} directly.

\begin{theorem}\label{the31}
Let {\bf (H1)} and {\bf (A5)} hold. Then there exist constants (i) $\beta$ satisfying $\beta\geq2\mu_1+B^2_{\bar{g}_z}+B^2_{\bar{g}_{\tilde{z}}}+B^2_{\bar{g}_\gamma}$ and (ii) $\mu_1$ satisfying $\mu_1\geq \beta+B_{\bar{g}_y}$, such that infinite horizon SDEP \eqref{fadjoint} admits a unique solution $p(\cdot)\in L^{2,-\beta}_{\mathcal{F}}(0,\infty;\mathbb{R})$, where $B_{\bar{g}_z},B_{\bar{g}_{\tilde{z}}},B_{\bar{g}_\gamma}$ and $B_{\bar{g}_y}$ are set by the bounds of partial derivatives $\bar{g}_z,\bar{g}_{\tilde{z}},\bar{g}_\gamma$ and $\bar{g}_y$ given in {\bf (A5)}.
\end{theorem}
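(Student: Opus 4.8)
The plan is to read the forward adjoint equation \eqref{fadjoint} as a particular instance of the infinite horizon SDEP \eqref{ifsdep01} and then invoke Theorem \ref{the21} directly. Writing $p$ for the unknown, the coefficients are the affine maps
\begin{equation*}
\hat{b}(p)=(\bar{g}_y+\beta)p+\bar{\mathcal{Z}}_t\bar{f}_y,\quad \hat{\sigma}(p)=\bar{g}_zp+\bar{\mathcal{Z}}_t\bar{f}_z,\quad \hat{\tilde{\sigma}}(p)=\bar{g}_{\tilde{z}}p+\bar{\mathcal{Z}}_t\bar{f}_{\tilde{z}},\quad \hat{l}(p,e)=\bar{g}_\gamma p+\bar{\mathcal{Z}}_t\bar{f}_\gamma,
\end{equation*}
which are globally Lipschitz in $p$ with constants $B_{\bar{g}_z},B_{\bar{g}_{\tilde{z}}},B_{\bar{g}_\gamma}$ by the boundedness of the derivatives of $g$ in {\bf (A5)}. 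I would first record the monotonicity of the drift: since $\bar{g}_y$ is bounded,
\begin{equation*}
\langle p_1-p_2,\hat{b}(p_1)-\hat{b}(p_2)\rangle=(\bar{g}_y+\beta)|p_1-p_2|^2\leq(B_{\bar{g}_y}+\beta)|p_1-p_2|^2\leq\mu_1|p_1-p_2|^2,
\end{equation*}
where the last inequality is precisely condition (ii). Thus {\bf (A2)} holds with the stated $\mu_1$, and the Lipschitz constants $L_\sigma,L_{\tilde{\sigma}},L_l$ of Theorem \ref{the21} may be taken as $B_{\bar{g}_z},B_{\bar{g}_{\tilde{z}}},B_{\bar{g}_\gamma}$.

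Next I would verify the remaining hypotheses {\bf (A1)}, {\bf (A3)}, {\bf (A4)}. Measurability and continuity are inherited from {\bf (A5)} and {\bf (H1)}, and {\bf (A3)} is the Lipschitz bound already recorded. The genuine content is {\bf (A4)}: I must show that the free terms $\bar{\mathcal{Z}}_t\bar{f}_y,\ \bar{\mathcal{Z}}_t\bar{f}_z,\ \bar{\mathcal{Z}}_t\bar{f}_{\tilde{z}}$ lie in $L^{2,-\beta}_{\mathcal{F}}(0,\infty;\mathbb{R})$ and $\bar{\mathcal{Z}}_t\bar{f}_\gamma$ in $L^{2,-\beta}_{\mathcal{F},\nu}(0,\infty;\mathbb{R})$. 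Here I would use the growth bound \eqref{f}, which controls each $\bar{f}_\psi$ by $C(1+|\bar{x}|+|\bar{y}|+|\bar{z}|+|\bar{\tilde{z}}|+\|\bar{\gamma}\|+|\bar{u}|)$, together with the higher-order moment estimates of Lemma \ref{lemma32} for $(\bar{x},\bar{y},\bar{z},\bar{\tilde{z}},\bar{\gamma})$ and the bound $\mathbb{E}[\sup_t|\bar{\mathcal{Z}}_t|^k]<\infty$ on the density. Applying Cauchy--Schwarz in $\omega$ to decouple $\bar{\mathcal{Z}}_t$ from $\bar{f}_\psi$, and then matching the discount factors $\beta,\beta_1,\dots,\beta_5$ so that the resulting time integral converges, yields the required membership.

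With {\bf (A1)--(A4)} in force, Theorem \ref{the21} applies as soon as $\beta>2(\bar{g}_y+\beta)+L_\sigma^2+L_{\tilde{\sigma}}^2+L_l^2$, in which $\bar{g}_y+\beta$ is the monotonicity constant bounded above by $\mu_1$. Since $2\mu_1\geq 2(\bar{g}_y+\beta)$, condition (i), namely $\beta\geq2\mu_1+B_{\bar{g}_z}^2+B_{\bar{g}_{\tilde{z}}}^2+B_{\bar{g}_\gamma}^2$, supplies exactly this dissipativity requirement; Theorem \ref{the21} then produces a unique solution $p(\cdot)\in L^{2,-\beta}_{\mathcal{F}}(0,\infty;\mathbb{R})$, with uniqueness obtained by applying the a priori estimate \eqref{est2} to the difference of two solutions (whose inhomogeneous discrepancy vanishes). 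I expect the verification of {\bf (A4)} to be the main obstacle: because $\bar{\mathcal{Z}}$ is a martingale carrying no decay in $t$ while the weight $e^{-\beta t}$ and the state moments each bring their own exponential rates, the free terms land in the correct weighted space only after careful bookkeeping of the discount factors, which is exactly where conditions (i)--(ii) must be reconciled with the estimates of Section 3.1.
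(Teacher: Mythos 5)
Your proposal follows essentially the same route as the paper's own (very brief) proof of Theorem \ref{the31}: recast \eqref{fadjoint} as an instance of the infinite horizon SDEP \eqref{ifsdep01}, verify {\bf (A1)--(A4)} --- condition (ii) supplying the monotonicity constant $\mu_1$ for the drift $(\bar{g}_y+\beta)p$, {\bf (A5)} the Lipschitz constants $B_{\bar{g}_z},B_{\bar{g}_{\tilde{z}}},B_{\bar{g}_\gamma}$, and {\bf (H1)} together with the Section 3.1 estimates the integrability of the free terms $\bar{\mathcal{Z}}_t\bar{f}_\psi$ required by {\bf (A4)} --- and then invoke Theorem \ref{the21} under the dissipativity condition (i). Your write-up simply fills in the details (the explicit monotonicity computation and the Cauchy--Schwarz bookkeeping decoupling $\bar{\mathcal{Z}}$ from the state moments) that the paper dispatches as ``not hard to verify.''
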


\begin{proof}
Indeed, it is not hard to verify that the coefficients of \eqref{fadjoint} satisfy the monotonicity and Lipschitz conditions by {\bf (A5)} with the condition (i) being given, and all the derivatives of coefficients with respect to $p$ are bounded. Moreover, {\bf (A4)} can also be obtained by {\bf (H1)} and the estimates in Section 3.1. Therefore, under condition (ii), it implies that \eqref{fadjoint} admits a unique solution $p(\cdot)\in L^{2,-\beta}_{\mathcal{F}}(0,\infty;\mathbb{R})$.
\end{proof}

Then we study the uniquely solvability of infinite horizon backward adjoint SDE \eqref{badjoint1}.
\begin{theorem}\label{the32}
Let {\bf (H0)--(H1)} hold. Then there exist constants (i) $\beta$ satisfying $\beta\geq-\mu_2$ and (ii) $\mu_2$ satisfying $\beta+2\mu_2+2e^{-\beta t}B^2_{\bar{h}}<0$, where $B_{\bar{h}}$ is the bound of $\bar{h}$ given in {\bf (H0)}, such that infinite horizon BSDE \eqref{badjoint1} admits a unique solution $(Q(\cdot),\tilde{M}(\cdot))\in L^{2,-\beta}_{\mathcal{F}}(0,\infty;\mathbb{R}^2)$.
\end{theorem}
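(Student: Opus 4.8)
The plan is to recognize the backward adjoint equation \eqref{badjoint1} as an infinite horizon BSDE of exactly the monotone type covered by Theorem \ref{the22}, and then to invoke that theorem after checking that its generator meets the structural assumptions {\bf (A5)--(A8)}. Writing the generator as
\[
\mathbf{g}(t,Q,\tilde{M}):=\bar{f}+e^{-\frac{\beta}{2}t}\bar{h}\tilde{M}-\beta Q,
\]
equation \eqref{badjoint1} fits the template \eqref{ifbsdep01}, with the backward unknown $Q$ playing the role of $y$ and the integrand $\tilde{M}$ playing the role of $\tilde{z}$; the $W$-integrand and the jump integrand are absent precisely because $\mathbf{g}$ depends on neither. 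Once {\bf (A5)--(A8)} are verified for $\mathbf{g}$, the existence of a unique solution $(Q(\cdot),\tilde{M}(\cdot))\in L^{2,-\beta}_{\mathcal{F}}(0,\infty;\mathbb{R}^2)$ follows along the lines of the construction in Theorem \ref{the22} (approximation by finite horizon BSDEs with zero terminal condition on $[0,n]$, the a priori estimate of Lemma \ref{lem22}, and passage to the limit), while uniqueness is immediate from Lemma \ref{lem22}.

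The verification is where conditions (i) and (ii) enter. Since $\mathbf{g}$ is affine in $(Q,\tilde{M})$, assumption {\bf (A5)} holds trivially: its derivatives $\mathbf{g}_Q=-\beta$ and $\mathbf{g}_{\tilde{M}}=e^{-\frac{\beta}{2}t}\bar{h}$ are bounded, the latter by $B_{\bar{h}}$ since $\bar{h}$ is bounded under {\bf (H0)}. For the monotonicity {\bf (A6)} one computes $\langle Q_1-Q_2,\mathbf{g}(t,Q_1,\tilde{M})-\mathbf{g}(t,Q_2,\tilde{M})\rangle=-\beta|Q_1-Q_2|^2$, so {\bf (A6)} holds with constant $\mu_2$ exactly when $-\beta\le\mu_2$, i.e. condition (i). For the Lipschitz assumption {\bf (A7)} the only nonzero coefficient is that of $\tilde{M}$, namely $K_2(t)=e^{-\frac{\beta}{2}t}B_{\bar{h}}$ (with $K_0=K_1=K_3=K_4=0$), and $\int_0^\infty K_2^2(t)\,dt=B_{\bar{h}}^2\int_0^\infty e^{-\beta t}\,dt<\infty$ for $\beta>0$. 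Feeding these into the structural inequality of {\bf (A8)} produces exactly $\beta+2\mu_2+2e^{-\beta t}B_{\bar{h}}^2<0$, which is condition (ii). Finally, the free-term part of {\bf (A8)} demands $\mathbb{E}\int_0^\infty e^{-\beta_0 t}|\mathbf{g}(t,0,0)|^2\,dt=\mathbb{E}\int_0^\infty e^{-\beta_0 t}|\bar{f}|^2\,dt<\infty$; since {\bf (H1)} makes $f$ quadratically growing, $|\bar{f}|^2\le C\big(1+|\bar{x}|^4+|\bar{y}|^4+|\bar{z}|^4+|\bar{\tilde{z}}|^4+\|\bar{\gamma}\|^4+|\bar{u}|^4\big)$, and the weighted moment bounds of Lemma \ref{lemma32} (applied with $k=2$, which is exactly why the higher order estimates of Section 3.1 were developed) deliver this finiteness for a suitable $\beta_0$.

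The main obstacle, as is typical on the infinite horizon, lies not in the linear computation but in the integrability bookkeeping and the compatibility of the constants. One must confirm that the inhomogeneous term $\bar{f}=f(\bar{x}_t,\bar{y}_t,\bar{z}_t,\bar{\tilde{z}}_t,\bar{\gamma}_{(t,e)},\bar{u}_t)$ genuinely lies in the weighted space, which rests on the a priori estimates of Lemma \ref{lemma31}--\ref{lemma32} for the optimal quintuple and on the admissibility of $\bar{u}$; and one must check that (i) and (ii) admit a common pair $(\beta,\mu_2)$. Combining them forces $-\beta\le\mu_2<-\tfrac{\beta}{2}-e^{-\beta t}B_{\bar{h}}^2$, which is nonempty only when $\beta>2e^{-\beta t}B_{\bar{h}}^2$, so it should be recorded that the discount factor must dominate the observation coefficient $B_{\bar{h}}$. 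A secondary point worth stating cleanly is the reduced martingale structure: because $\mathbf{g}$ involves neither the $W$-integrand nor the jump integrand, the solution is genuinely a pair $(Q,\tilde{M})$ driven by $\xi$ alone, which is consistent with $Q$ being the adjoint process associated with the purely $\xi$-driven multiplier equation \eqref{mathcalZ}.
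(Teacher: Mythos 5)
Your proposal follows, in substance, the same route as the paper: the paper also treats \eqref{badjoint1} as a monotone, Lipschitz infinite horizon BSDE, solves the finite horizon truncation \eqref{QtildeMT} with zero terminal value, truncates the free term via $\varphi^k_t=\mathbb{I}_{[0,k]}(t)\bar{f}$, shows the truncated solutions form a Cauchy sequence in $L^{2,-\beta}_{\mathcal{F}}(0,\infty;\mathbb{R}^2)$ by Lemma \ref{lem22} under condition (ii), and derives uniqueness from the same lemma. Your identification of the monotonicity constant ($-\beta\le\mu_2$, condition (i)), of the Lipschitz coefficient $K_2(t)=e^{-\frac{\beta}{2}t}B_{\bar h}$ feeding the structural inequality of {\bf (A8)} (condition (ii)), and your appeal to the Section 3.1 moment estimates for $\bar f\in L^{2,-\beta}_{\mathcal{F}}(0,\infty;\mathbb{R})$ all match the paper (which is no more detailed about the quartic-growth integrability of $|\bar f|^2$ than you are). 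The one genuine difference in packaging: the paper deliberately does \emph{not} cite Theorem \ref{the22} wholesale but reruns its construction with Theorem 4.2 of \cite{BDHPS03} as the finite horizon input, precisely to record $(Q(\cdot),\tilde M(\cdot))\in\mathcal{S}^{2k}(0,T)\times L^{2k}_{\mathcal{F}}(0,T;\mathbb{R})$ for all $k\ge1$; that higher regularity is consumed later, in the proof of Theorem \ref{the33}, where $\mathbb{E}\big(\int_0^T|\tilde M_t|^2dt\big)^2<\infty$ is needed. Your black-box reduction delivers only the weighted $L^2$ solution, so on your route those higher moments would have to be supplied separately.

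There is, however, one step whose justification is not valid as stated: you assert that the $dW$- and jump-integrands are ``absent precisely because $\mathbf{g}$ depends on neither.'' Under the full filtration $(\mathcal{F}_t)_{t\ge0}$, the martingale integrands of a BSDE are dictated by martingale representation of the data, not by which arguments the generator uses; since the free term $\bar f=f(\bar x,\bar y,\bar z,\bar{\tilde z},\bar\gamma,\bar u)$ is adapted to the filtration generated jointly by $W$, $\xi$ and $N$, a verbatim application of Theorem \ref{the22} returns a quadruple $(Q,M,\tilde M,n)$ in which $M$ and $n$ are generically nonzero --- that is, a solution of the four-integrand equation, not of \eqref{badjoint1} as written. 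To reach the two-component statement you must do what the paper's proof does in effect: pose \eqref{badjoint1} as a BSDE driven by $\xi$ alone and run the one-driver analogue of the truncation/Lemma \ref{lem22} argument (this is how the citation of \cite{BDHPS03}, a single-driver result, should be read). Replace the ``vanishing because unused'' justification by this single-driver reduction; the rest of your verification of {\bf (A5)--(A8)}, including the useful compatibility observation that $-\beta\le\mu_2<-\tfrac{\beta}{2}-e^{-\beta t}B^2_{\bar h}$ forces $\beta>2e^{-\beta t}B^2_{\bar h}$ (hence $\beta>0$, which in turn makes $\int_0^\infty K_2^2(t)\,dt<\infty$ self-consistent), is sound and recovers exactly the constants appearing in the theorem.
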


\begin{proof}
We first consider the following equation in finite horizon $[0,T]$:
\begin{equation}\label{QtildeMT}
\left\{
\begin{aligned}
-dQ_t&=\Big(\bar{f}+e^{-\frac{\beta}{2}t}\bar{h}\tilde{M}_t-\beta Q_t\Big)dt-\tilde{M}_td\xi_t,\\
  Q_T&=0,
\end{aligned}
\right.
\end{equation}
which, by Theorem 4.2 in Briand et al. \cite{BDHPS03}, admits a unique solution $(Q(\cdot),\tilde{M}(\cdot))\in\mathcal{S}^{2k}(0,T)\times L^{2k}_{\mathcal{F}}(0,T;\mathbb{R})$ for $k\geq1$. (In fact, the assumptions (H1)--(H5) in Theorem 4.2 of \cite{BDHPS03} can be satisfied exactly combining with condition (i), and we still set $\mu:=\mu_2$ for convenience. We omit more detailed analysis.) Next, inspired by Theorem 4 in \cite{PS00} in which, using {\bf (H0)--(H1)} and condition (i), (ii), the assumption (H4) are satisfied and $\bar{f}\in L^{2,-\beta}_{\mathcal{F}}(0,\infty;\mathbb{R})$. By the corresponding moments of the state processes $(x,y,z,\tilde{z},\gamma)$, we define for all $k\in N$,
\begin{equation*}
\varphi_t^k:=\mathbb{I}_{[0,k]}(t)\bar{f}\xrightarrow{L^{2,-\beta}_{\mathcal{F}}(0,\infty;\mathbb{R})}\bar{f}\quad \text{as}\  k\rightarrow\infty.
\end{equation*}
Define, for all $k\in N$, $(Q^k(\cdot),\tilde{M}^k(\cdot))$ satisfying
\begin{equation}\label{QtildeMk}
\left\{
\begin{aligned}
-dQ^k_t&=\Big(\varphi_t^k+e^{-\frac{\beta}{2}t}\bar{h}\tilde{M}^k_t-\beta Q^k_t\Big)dt-\tilde{M}^k_td\xi_t,\\
  Q_k^k&=0.
\end{aligned}
\right.
\end{equation}
In fact, \eqref{QtildeMk} can be regarded as the solution to the following equations: on $[0,k]$,
\begin{equation*}
\left\{
\begin{aligned}
-dQ^k_t&=\Big(\bar{f}+e^{-\frac{\beta}{2}t}\bar{h}\tilde{M}^k_t-\beta Q^k_t\Big)dt-\tilde{M}^k_td\xi_t,\\
  Q^k_k&=0,
\end{aligned}
\right.
\end{equation*}
which admits a unique solution $(Q^k(\cdot),\tilde{M}^k(\cdot))$ by the discussion for \eqref{QtildeMT}, and on $(k,\infty)$, it identically equals zero. It follows from Lemma \ref{lem22} with condition (ii) that
\begin{equation*}
\mathbb{E}\int_0^\infty e^{-\beta t}\Big(|\hat{\tilde{M}}_t|^2+|\hat{Q}_t|^2\Big)dt
\leq \frac{1}{\delta}\mathbb{E}\int_0^\infty e^{-\beta t}|\varphi_t^k-\varphi_t^{k^\prime}|^2dt\rightarrow0,\quad \text{as}\ k,k^\prime\rightarrow\infty,
\end{equation*}
where $\hat{\tilde{M}}(\cdot):=\tilde{M}^k(\cdot)-\tilde{M}^{k^\prime}(\cdot)$, $\hat{Q}(\cdot)=Q^k(\cdot)-Q^{k^\prime}(\cdot)$, and $(Q^k(\cdot),\tilde{M}^k(\cdot))$, $(Q^{k^\prime}(\cdot),\tilde{M}^{k^\prime}(\cdot))$ are the solutions to \eqref{QtildeMk} with $\varphi_t^k,\varphi_t^{k^\prime}$, respectively. Thus $\{(Q^k(\cdot),\tilde{M}^k(\cdot))\}$ is a Cauchy sequence in $L^{2,-\beta}_{\mathcal{F}}(0,\infty;\mathbb{R}^2)$, then we can check that its limit $(Q(\cdot),\tilde{M}(\cdot))\in L^{2,-\beta}_{\mathcal{F}}(0,\infty;\mathbb{R}^2)$ satisfies \eqref{badjoint1}. The uniqueness is also obtained by Lemma \ref{lem22}. The proof is complete.
\end{proof}

Then we consider the uniquely solvability of infinite horizon backward adjoint SDEP \eqref{badjoint2}.
\begin{theorem}\label{the33}
Let {\bf (H0)--(H1)} and {\bf (A1)--(A8)} hold. There exist constants $\beta,\mu_2$ satisfying (i) $B_{\bar{\tilde{b}}_x}-\beta\leq\mu_2$ and (ii) $\beta+2\mu_2+2B^2_{\bar{\sigma}_x}+2B^2_{\bar{\tilde{\sigma}}_x}+2B^2_{\bar{l}_x}<0$, where $B_{\bar{\sigma}_x},B_{\bar{\tilde{\sigma}}_x},B_{\bar{l}_x}$ and $B_{\bar{\tilde{b}}_x}$ are bounds decided by {\bf (H0)}, {\bf (A1)}, such that infinite horizon BSDEP \eqref{badjoint2} admits a unique solution $(q(\cdot),m(\cdot),\tilde{m}(\cdot),n(\cdot,\cdot))\in L^{2,-\beta}_{\mathcal{F}}(0,\infty;\mathbb{R}^3)\times L^{2,-\beta}_{\mathcal{F},\nu}(0,\infty;\mathbb{R})$.
\end{theorem}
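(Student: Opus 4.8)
The plan is to read the adjoint equation \eqref{badjoint2} as an infinite horizon BSDEP of exactly the type \eqref{ifbsdep01}, with the quadruple $(q,m,\tilde{m},n)$ playing the roles of $(y,z,\tilde{z},\gamma)$, and then to reproduce the argument already used for \eqref{badjoint1} in Theorem \ref{the32}. Its generator is affine,
\begin{equation*}
\mathcal{G}(t,q,m,\tilde{m},n)=(\bar{\tilde{b}}_x-\beta)q+\bar{\sigma}_x m+\bar{\tilde{\sigma}}_x\tilde{m}+\int_{\mathcal{E}}\bar{l}_x\,n_{(t,e)}\nu(de)+\psi_t,
\end{equation*}
where $\psi_t:=\bar{\mathcal{Z}}_t\bar{f}_x+\bar{g}_x p_t+e^{-\frac{\beta}{2}t}\bar{\mathcal{Z}}_t\bar{h}_x\tilde{M}_t$ gathers the terms free of the unknowns. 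First I would verify the structural hypotheses {\bf (A5)}--{\bf (A8)}. Condition {\bf (A5)} is immediate because $\mathcal{G}$ is linear with coefficients $\bar{\tilde{b}}_x,\bar{\sigma}_x,\bar{\tilde{\sigma}}_x,\bar{l}_x,\bar{h}_x$ that are bounded by {\bf (A1)} and {\bf (H0)}. The monotonicity {\bf (A6)} in the $q$ variable reduces to $(\bar{\tilde{b}}_x-\beta)|q_1-q_2|^2\le(B_{\bar{\tilde{b}}_x}-\beta)|q_1-q_2|^2\le\mu_2|q_1-q_2|^2$, which is precisely condition (i). The Lipschitz bound {\bf (A7)} holds with the \emph{constant} coefficients $K_1=B_{\bar{\sigma}_x}$, $K_2=B_{\bar{\tilde{\sigma}}_x}$, $K_3=B_{\bar{l}_x}$ and $K_4=0$, so that condition (ii) is exactly the sign requirement $\beta+2\mu_2+2K_1^2+2K_2^2+2K_3^2<0$ of {\bf (A8)}.

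The step I expect to be the main obstacle is the integrability of the forcing term, i.e. the analogue of the second part of {\bf (A8)}: one must show $\psi(\cdot)\in L^{2,-\beta}_{\mathcal{F}}(0,\infty;\mathbb{R})$. The derivative factors $\bar{f}_x,\bar{g}_x,\bar{h}_x$ are bounded by {\bf (H1)} and {\bf (H0)}, and the moments needed for the remaining factors are supplied by the earlier results: $p(\cdot)\in L^{2,-\beta}_{\mathcal{F}}(0,\infty;\mathbb{R})$ by Theorem \ref{the31}, $\tilde{M}(\cdot)\in L^{2,-\beta}_{\mathcal{F}}(0,\infty;\mathbb{R})$ by Theorem \ref{the32}, and $\mathbb{E}[\sup_t|\bar{\mathcal{Z}}_t|^k]<\infty$ for every $k\ge1$ by Lemma \ref{lemma32}. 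The delicate summand is the cross term $e^{-\frac{\beta}{2}t}\bar{\mathcal{Z}}_t\bar{h}_x\tilde{M}_t$, where the Radon--Nikodym density must be paired with the martingale integrand of \eqref{badjoint1}. Here I would apply Hölder's inequality on the weighted product space $(\mathbb{R}_+\times\Omega,\,e^{-\beta t}\,dt\,d\mathbb{P})$, using that $\mathbb{E}[\sup_t|\bar{\mathcal{Z}}_t|^{2p}]<\infty$ makes $\bar{\mathcal{Z}}\in L^{2p,-\beta}_{\mathcal{F}}(0,\infty;\mathbb{R})$ for arbitrarily large $p$, and pairing it with a mild higher-order integrability $\tilde{M}\in L^{2p',-\beta}_{\mathcal{F}}(0,\infty;\mathbb{R})$ (with $p'$ conjugate to $p$, close to $1$) inherited from the finite horizon $\mathcal{S}^{2k}\times L^{2k}$ estimates underlying Theorem \ref{the32}; the extra factor $e^{-\frac{\beta}{2}t}$ only improves convergence. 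The two remaining summands $\bar{\mathcal{Z}}_t\bar{f}_x$ and $\bar{g}_x p_t$ are handled the same way, yielding $\psi\in L^{2,-\beta}_{\mathcal{F}}(0,\infty;\mathbb{R})$.

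With {\bf (A5)}--{\bf (A8)} and this integrability in hand, I would construct the solution by truncation exactly as in Theorem \ref{the22} and Theorem \ref{the32}: set $\psi^k_t:=\mathbb{I}_{[0,k]}(t)\psi_t$, solve on $[0,k]$ the finite horizon BSDEP with terminal value $q_k=0$ by Proposition A.2 of \cite{QS13}, and extend by zero on $(k,\infty)$. The a priori estimate of Lemma \ref{lem22} then shows that the truncated quadruples form a Cauchy sequence in $L^{2,-\beta}_{\mathcal{F}}(0,\infty;\mathbb{R}^3)\times L^{2,-\beta}_{\mathcal{F},\nu}(0,\infty;\mathbb{R})$, since under condition (ii) it requires only the positivity $-\beta-2\mu_2-2K_1^2-2K_2^2-2K_3^2-\delta>0$ for small $\delta$, and \emph{not} the integrability $\int_0^\infty K_i^2\,dt<\infty$; this is what makes the constant, non-decaying Lipschitz coefficients harmless here. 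Passing to the limit and interpreting the limiting quadruple as a solution in the sense of Definition \ref{definition22}, as explained in Remark \ref{remark26}, gives a solution on every finite interval $[0,T]$ and thereby removes any need to impose $\beta<0$. Finally, uniqueness follows directly from Lemma \ref{lem22}, completing the argument.
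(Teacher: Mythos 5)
Your overall architecture coincides with the paper's own proof: the paper likewise solves the finite horizon equation \eqref{qT} on $[0,T]$ with zero terminal value via Proposition A.2 of \cite{QS13}, truncates the forcing term $\kappa^i_t=\psi_t\mathbb{I}_{[0,i]}(t)$, extends by zero on $(i,\infty)$, obtains a Cauchy sequence in $L^{2,-\beta}_{\mathcal{F}}(0,\infty;\mathbb{R}^3)\times L^{2,-\beta}_{\mathcal{F},\nu}(0,\infty;\mathbb{R})$ from the a priori estimate of Lemma \ref{lem22} under conditions (i)--(ii), and derives uniqueness from the same lemma. Your reduction of condition (i) to the monotonicity {\bf (A6)} and of condition (ii) to the sign part of {\bf (A8)} is exactly what the paper uses implicitly, and your observation that Lemma \ref{lem22} needs only the pointwise sign requirement $-\beta-2\mu_2-2K_1^2-2K_2^2-2K_3^2-\delta>0$ and \emph{not} the integrability $\int_0^\infty K_i^2(t)\,dt<\infty$ demanded by {\bf (A7)} --- which the constant coefficients $B_{\bar\sigma_x},B_{\bar{\tilde\sigma}_x},B_{\bar l_x}$ would violate --- is a genuinely sharper account of why the scheme survives for a linear generator; the paper glosses this point by simply citing the hypotheses of \cite{Yu17}.

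The one step that does not hold as written is your treatment of the cross term $e^{-\frac{\beta}{2}t}\bar{\mathcal{Z}}_t\bar h_x\tilde M_t$. You pair $\bar{\mathcal{Z}}\in L^{2p}$ (legitimate, by Lemma \ref{lemma32}) with ``$\tilde M\in L^{2p',-\beta}_{\mathcal{F}}(0,\infty;\mathbb{R})$ for $p'$ close to $1$, inherited from the finite horizon $\mathcal{S}^{2k}\times L^{2k}$ estimates underlying Theorem \ref{the32}.'' This inheritance is not justified: Theorem \ref{the32} delivers only $\tilde M\in L^{2,-\beta}_{\mathcal{F}}(0,\infty;\mathbb{R})$, i.e.\ the \emph{first} moment of $\int_0^\infty e^{-\beta t}|\tilde M_t|^2dt$, while the bounds $\mathbb{E}\big(\int_0^T|\tilde M_t|^2dt\big)^k<\infty$ from \cite{BDHPS03} hold for each fixed $T$ without uniformity in $T$, so they do not transfer to the weighted space on $(0,\infty)$; consequently no H\"older pairing with $p'>1$ is available, and since $\bar{\mathcal{Z}}$ and $\tilde M$ are dependent, $\mathbb{E}\int_0^\infty e^{-2\beta t}|\bar{\mathcal{Z}}_t|^2|\tilde M_t|^2dt$ cannot be factored otherwise. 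Note how the paper dodges this where it argues rigorously: for the \cite{QS13} step it verifies only the unweighted bound $\mathbb{E}\int_0^T|\tilde G(0,0,0,0)|^2dt<\infty$, on which finite horizon the $L^4$ bound on $\int_0^T|\tilde M_t|^2dt$ is legitimate. (Both you and the paper do ultimately need $\psi\in L^{2,-\beta}_{\mathcal{F}}(0,\infty;\mathbb{R})$ for the truncation convergence $\kappa^i\to\kappa$; the paper asserts it without proof. A genuine repair is to upgrade Theorem \ref{the32} to weighted moments $\mathbb{E}\big(\int_0^\infty e^{-\beta t}|\tilde M_t|^2dt\big)^k<\infty$ by rerunning the backward estimates of Lemma \ref{lemma31} on \eqref{badjoint1}, whose generator is monotone in $Q$ and Lipschitz in $\tilde M$ with square-integrable coefficient $e^{-\frac{\beta}{2}t}B_{\bar h}$; with that in hand your H\"older pairing goes through.)
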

\begin{proof}
We first consider the following equation in finite horizon $[0,T]$:
\begin{equation}\label{qT}
\left\{
\begin{aligned}
-dq_t&=\tilde{G}\big(q_t,m_t,\tilde{m}_t,n_{(t,e)}\big)dt-m_tdW_t-\tilde{m}_td\xi_t-\int_{\mathcal{E}}n_{(t,e)}\tilde{N}(de,dt),\\
q_T&=0,
\end{aligned}
\right.
\end{equation}
where we set
\begin{equation*}
\tilde{G}(q,m,\tilde{m},n):=\bar{\mathcal{Z}}_t\bar{f}_x+\bar{g}_xp_t+\bar{\tilde{b}}_xq+\bar{\sigma}_xm+\bar{\tilde{\sigma}}_x\tilde{m}+\int_{\mathcal{E}}\bar{l}_x n\nu(de)-\beta q+e^{-\frac{\beta}{2}t}\bar{\mathcal{Z}}_t\bar{h}_x\tilde{M}_t.
\end{equation*}
It is easy to check that the generator $\tilde{G}$ is Lipschitz in $(q,m,\tilde{m},n)$ by {\bf (H0)}, {\bf (A1)}. Then we get $\mathbb{E}\int_0^T|\tilde{G}(0,0,0,0)|^2dt<\infty$. Indeed, by {\bf (H1)} and the estimates in Section 3.1, we have
\begin{equation*}
\mathbb{E}\int_0^T|\tilde{G}(0,0,0,0)|^2dt\leq \mathbb{E}\int_0^T\Big(|\bar{\mathcal{Z}}_t|^2|\bar{f}_x|^2+|\bar{g}_x|^2|p_t|^2+e^{-\beta t}|\bar{\mathcal{Z}}_t|^2|\bar{h}_x|^2|\tilde{M}_t|^2\Big)dt.
\end{equation*}
Note that the first term
\begin{equation*}
\mathbb{E}\int_0^T|\bar{\mathcal{Z}}|^2|\bar{f}_x|^2dt\leq C\mathbb{E}\int_0^T|\bar{\mathcal{Z}}|^2\big(1+|\bar{x}|^2+|\bar{y}|^2+|\bar{z}|^2+|\bar{\tilde{z}}|^2+||\bar{\gamma}||^2+|\bar{u}|^2\big)dt,
\end{equation*}
with
\begin{equation*}
\mathbb{E}\int_0^T|\bar{\mathcal{Z}}|^2dt\leq T\mathbb{E}\bigg[\sup_{t\in[0,T]}|\bar{\mathcal{Z}}|^2\bigg]<\infty,
\end{equation*}
\begin{equation*}
\begin{aligned}
&\mathbb{E}\int_0^T|\bar{\mathcal{Z}}|^2|\bar{x}|^2dt\leq \mathbb{E}\bigg[\sup_{t\in[0,T]}|\bar{\mathcal{Z}}|^2\int_0^Te^{\beta t}e^{-\beta t}|\bar{x}|^2dt\bigg]\\
&\leq \bigg(\mathbb{E}\bigg[\sup_{t\in[0,T]}|\bar{\mathcal{Z}}|^4\bigg]\bigg)^{\frac{1}{2}}\bigg(\mathbb{E}\bigg(\int_0^Te^{\beta t}e^{-\beta t}|\bar{x}|^2dt\bigg)^2\bigg)^{\frac{1}{2}}\\
&\leq C\bigg(\int_0^Te^{2\beta t}dt\bigg)^{\frac{1}{2}}\bigg(\mathbb{E}\int_0^Te^{-2\beta t}|\bar{x}|^4dt\bigg)^{\frac{1}{2}}
 \leq C\bigg(\sup_{t\in[0,T]}\mathbb{E}\big[e^{-2\beta t}|\bar{x}|^4\big]\bigg)^{\frac{1}{2}}<\infty,
\end{aligned}
\end{equation*}
similarly, $\mathbb{E}\int_0^T|\bar{\mathcal{Z}}|^2|\bar{y}|^2dt<\infty$, and
\begin{equation*}
\begin{aligned}
&\mathbb{E}\int_0^T|\bar{\mathcal{Z}}|^2|\bar{z}|^2dt\leq \mathbb{E}\bigg[\sup_{t\in[0,T]}|\bar{\mathcal{Z}}|^2\int_0^Te^{\beta t}e^{-\beta t}|\bar{z}|^2dt\bigg]\\
&\leq \bigg(\mathbb{E}\bigg[\sup_{t\in[0,T]}|\bar{\mathcal{Z}}|^4\bigg]\bigg)^{\frac{1}{2}}\bigg(\mathbb{E}\bigg(\int_0^Te^{\beta t}e^{-\beta t}|\bar{z}|^2dt\bigg)^2\bigg)^{\frac{1}{2}}\\
&\leq C\bigg(\mathbb{E}\bigg(\sup_{t\in[0,T]}e^{\beta t}\int_0^Te^{-\beta t}|\bar{z}|^2dt\bigg)^2\bigg)^{\frac{1}{2}}
 \leq C\bigg(\mathbb{E}\bigg(\int_0^Te^{-\beta t}|\bar{z}|^2dt\bigg)^2\bigg)^{\frac{1}{2}}<\infty,
\end{aligned}
\end{equation*}
\begin{equation*}
\begin{aligned}
&\mathbb{E}\int_0^T|\bar{\mathcal{Z}}|^2||\bar{\gamma}||^2dt\leq \mathbb{E}\bigg[\sup_{t\in[0,T]}|\bar{\mathcal{Z}}|^2\int_0^Te^{\beta t}e^{-\beta t}||\bar{\gamma}||^2dt\bigg]\\
&\leq \bigg(\mathbb{E}\bigg[\sup_{t\in[0,T]}|\bar{\mathcal{Z}}|^4\bigg]\bigg)^{\frac{1}{2}}\bigg(\mathbb{E}\bigg(\int_0^Te^{\beta t}e^{-\beta t}||\bar{\gamma}||^2dt\bigg)^2\bigg)^{\frac{1}{2}}\\
&\leq C\bigg(\mathbb{E}\bigg(\sup_{t\in[0,T]}e^{\beta t}\int_0^Te^{-\beta t}||\bar{\gamma}||^2dt\bigg)^2\bigg)^{\frac{1}{2}}
 \leq C\bigg(\mathbb{E}\bigg(\int_0^T\int_{\mathcal{E}}e^{-\beta t}|\bar{\gamma}|^2N(de,dt)\bigg)^2\bigg)^{\frac{1}{2}}<\infty,
\end{aligned}
\end{equation*}
\begin{equation*}
\begin{aligned}
&\mathbb{E}\int_0^T|\bar{\mathcal{Z}}|^2|\bar{u}|^2dt\leq \mathbb{E}\bigg[\sup_{t\in[0,T]}|\bar{\mathcal{Z}}|^2\int_0^Te^{\beta t}e^{-\beta t}|\bar{u}|^2dt\bigg]\\
&\leq \bigg(\mathbb{E}\bigg[\sup_{t\in[0,T]}|\bar{\mathcal{Z}}|^4\bigg]\bigg)^{\frac{1}{2}}\bigg(\mathbb{E}\bigg(\int_0^Te^{\beta t}e^{-\beta t}|\bar{u}|^2dt\bigg)^2\bigg)^{\frac{1}{2}}
 \leq C\bigg(\mathbb{E}\bigg(\int_0^Te^{-2\beta t}|\bar{u}|^4dt\bigg)\bigg)^{\frac{1}{2}}<\infty,
\end{aligned}
\end{equation*}
the second term
\begin{equation*}
\mathbb{E}\int_0^T|\bar{g}_x|^2|p_t|^2dt\leq C\mathbb{E}\int_0^Te^{\beta t}e^{-\beta t}|p_t|^2dt\leq C\mathbb{E}\int_0^Te^{-\beta t}|p_t|^2dt<\infty,
\end{equation*}
and the third term
\begin{equation*}
\begin{aligned}
&\mathbb{E}\int_0^Te^{-\beta t}|\bar{\mathcal{Z}}|^2|\bar{h}_x|^2|\tilde{M}_t|^2dt\leq C\mathbb{E}\bigg[\sup_{t\in[0,T]}|\bar{\mathcal{Z}}|^2\int_0^Te^{-\beta t}|\tilde{M}_t|^2dt\bigg]\\
&\leq C\bigg(\mathbb{E}\bigg[\sup_{t\in[0,T]}|\bar{\mathcal{Z}}|^4\bigg]\bigg)^{\frac{1}{2}}\bigg(\mathbb{E}\bigg(\int_0^Te^{-\beta t}|\tilde{M}_t|^2dt\bigg)^2\bigg)^{\frac{1}{2}}
 \leq C\bigg(\mathbb{E}\bigg(\int_0^T|\tilde{M}_t|^2dt\bigg)^2\bigg)^{\frac{1}{2}}<\infty,
\end{aligned}
\end{equation*}
where the last line holds is due to the uniquely solvability of \eqref{QtildeMT} and $(Q(\cdot),\tilde{M}(\cdot))\in \mathcal{S}^{2k}(0,T)\times L^{2k}_{\mathcal{F}}(0,T;\mathbb{R})$.
Then we get $\mathbb{E}\int_0^T|\tilde{G}(0,0,0,0)|^2dt<\infty$.

By Proposition A.2 in the Appendix of \cite{QS13}, there exists a unique solution $(q(\cdot),m(\cdot),\tilde{m}(\cdot),\\n(\cdot,\cdot))$ of \eqref{qT} in $L^2_{\mathcal{F}}(0,T;\mathbb{R}^3)\times L^2_{\mathcal{F},\nu}(0,T;\mathbb{R})$.

Next, inspired by \cite{Yu17} in which similar(H2.1)--(H2.4) can be satisfied by the coefficients in our setting by choosing appropriate constants $\beta$ and $\mu_2$ satisfying conditions (i), (ii). However, the difference is that we extend the results into a bigger space $L^{2,-\beta}_{\mathcal{F}}(0,\infty;\mathbb{R})$ with $\beta>0$. Employing the idea in \cite{PS00}, define for all $i\in N$,
\begin{equation*}
\kappa^i_t:=\Big(\bar{\mathcal{Z}}_t\bar{f}_x+\bar{g}_xp_t+e^{-\frac{\beta}{2}t}\bar{\mathcal{Z}}_t\bar{h}_x\tilde{M}_t\Big)\mathbb{I}_{[0,i]}(t),
\end{equation*}
which converges to $\bar{\mathcal{Z}}_t\bar{f}_x+\bar{g}_xp_t+e^{-\frac{\beta}{2}t}\bar{\mathcal{Z}}_t\bar{h}_x\tilde{M}_t$ in $L^{2,-\beta}_{\mathcal{F}}(0,\infty;\mathbb{R})$, as $i\rightarrow\infty$. Then define $(q^i(\cdot),m^i(\cdot),\tilde{m}^i(\cdot),n^i(\cdot,\cdot))$ satisfying
\begin{equation}\label{qmtildemni}
\begin{aligned}
-dq^i_t&=\bigg(\bar{\tilde{b}}_x q^i_t+\bar{\sigma}_x m^i_t+\bar{\tilde{\sigma}}_x \tilde{m}^i_t+\int_{\mathcal{E}}\bar{l}_x n^i_{(t,e)}\nu(de)-\beta q^i_t+\kappa^i_t\bigg)dt\\
&\quad-m^i_tdW_t-\tilde{m}^i_td\xi_t-\int_{\mathcal{E}}n^i_{(t,e)}\tilde{N}(de,dt),\\
\end{aligned}
\end{equation}
which is the solution to the following BSDEP on $[0,i]$:
\begin{equation*}
\left\{
\begin{aligned}
-dq^i_t&=\bigg(\bar{\tilde{b}}_x q^i_t+\bar{\sigma}_x m^i_t+\bar{\tilde{\sigma}}_x \tilde{m}^i_t+\int_{\mathcal{E}}\bar{l}_x n^i_{(t,e)}\nu(de)-\beta q^i_t
        +\bar{\mathcal{Z}}_t\bar{f}_x+\bar{g}_x p_t+e^{-\frac{\beta}{2}t}\bar{\mathcal{Z}}_t\bar{h}_x\tilde{M}_t\bigg)dt\\
       &\quad-m^i_tdW_t-\tilde{m}^i_td\xi_t-\int_{\mathcal{E}}n^i_{(t,e)}\tilde{N}(de,dt),\\
  q_i^i&=0,
\end{aligned}
\right.
\end{equation*}
whose uniquely solvability can be guaranteed by the above discussion about \eqref{qT}. And on $(i,\infty)$, it identically equals zero. Then let $(q^i(\cdot),m^i(\cdot),\tilde{m}^i(\cdot),n^i(\cdot,\cdot))$, $(q^j(\cdot),m^j(\cdot),\tilde{m}^j(\cdot),n^j(\cdot,\cdot))$ are two solutions to \eqref{qmtildemni} with $\kappa^i_t,\kappa^j_t$, respectively. And set $\hat{q}=q^i-q^j,\hat{m}=m^i-m^j,\hat{\tilde{m}}=\tilde{m}^i-\tilde{m}^j,\hat{n}=n^i-n^j$, By Lemma \ref{lem22} and condition (ii), we get
\begin{equation}
\begin{aligned}
&\mathbb{E}\int_0^\infty e^{-\beta t}\Big(|\hat{m}|^2+|\hat{\tilde{m}}|^2+||\hat{n}||^2+|\hat{q}|^2\Big)dt
\leq \frac{1}{\delta}\mathbb{E}\int_0^\infty e^{-\beta t}|\kappa^i_t-\kappa^j_t|^2dt\xrightarrow{i,j\rightarrow\infty}0.
\end{aligned}
\end{equation}
Therefore, $\{(q^i(\cdot),m^i(\cdot),\tilde{m}^i(\cdot),n^i(\cdot,\cdot))\}$ is a Cauchy sequence in $L^{2,-\beta}_{\mathcal{F}}(0,\infty;\mathbb{R}^3)\times L^{2,-\beta}_{\mathcal{F},\nu}(0,\infty;\mathbb{R})$ and its limit $(q(\cdot),m(\cdot),\tilde{m}(\cdot),n(\cdot,\cdot))$ solves \eqref{badjoint2}. The uniqueness is the direct result of a priori estimate in Lemma \ref{lem22}. The proof is complete.
\end{proof}

\begin{remark}
By observing Theorem \ref{the31}, \ref{the32} and \ref{the33}, we can give a sufficient condition to guarantee the uniquely solvability of FBSDEPs \eqref{fadjoint}--\eqref{badjoint2} as follows:
\begin{equation}\label{sc}
\mu_1+\mu_2<\frac{1}{2}\lambda_{min},\quad \beta=\frac{\mu_1-\mu_2}{2}+\frac{B_{\bar{\tilde{b}}_x}-B_{\bar{g}_y}}{2},
\end{equation}
where $\lambda_{min}=\min\big\{-2e^{-\beta t}B^2_{\bar{h}}-B^2_{\bar{g}_z}-B^2_{\bar{g}_{\tilde{z}}}-B^2_{\bar{g}_\gamma},-2B^2_{\bar{\sigma}_x}-2B^2_{\bar{\tilde{\sigma}}_x}
-2B^2_{\bar{l}_x}-B^2_{\bar{g}_z}-B^2_{\bar{g}_{\tilde{z}}}-B^2_{\bar{g}_\gamma}\big\}$. In fact, combining condition (ii) in Theorem \ref{the31} with the second equality in \eqref{sc}, it implies conditions (i) both in Theorem \ref{the32} and in Theorem \ref{the33}.
\end{remark}

Let us introduce the Hamiltonian function $H:[0,\infty )\times\mathbb{R}^{12}\times U\rightarrow\mathbb{R}$ as
\begin{equation}\label{Hamiltonian}
\begin{aligned}
&H(t,x,y,z,\tilde{z},\gamma,\mathcal{Z},u;p,q,m,\tilde{m},n,\tilde{M}):=g(x,y,z,\tilde{z},\gamma,u)p+\tilde{b}(x,u)q+\sigma(x,u)m\\
&+\tilde{\sigma}(x,u)\tilde{m}+\int_{\mathcal{E}}l(x,u,e)n_{e}\nu(de)+e^{-\frac{\beta}{2}t}h(x,u)\tilde{M}\mathcal{Z}+f(x,y,z,\tilde{z},\gamma,u)\mathcal{Z}.
\end{aligned}
\end{equation}
Thus we can rewrite the forward-backward adjoint SDEPs \eqref{fadjoint}--\eqref{badjoint2} as follows:
\begin{equation}\label{fbadjoint1}
\left\{
\begin{aligned}
dp_t&=(\bar{H}_y+\beta p_t)dt+\bar{H}_zdW_t+\bar{H}_{\tilde{z}}d\xi_t+\int_{\mathcal{E}}\bar{H}_\gamma\tilde{N}(de,dt),\\
-dQ_t&=(\bar{H}_{\mathcal{Z}}-\beta Q_t)dt-\tilde{M}_td\xi_t,\\
-dq_t&=(\bar{H}_x-\beta q_t)dt-m_tdW_t-\tilde{m}_td\xi_t-\int_{\mathcal{E}}n_{(t,e)}\tilde{N}(de,dt),\\
p_0&=\phi_y(\bar{y}_0),
\end{aligned}
\right.
\end{equation}
where $\bar{H}_\Gamma:=H_\Gamma(t,\bar{x},\bar{y},\bar{z},\bar{\tilde{z}},\bar{\gamma},\bar{\mathcal{Z}},\bar{u};p,q,m,\tilde{m},n,\tilde{M})$, for $\Gamma=x,y,z,\tilde{z},\gamma,u$.

Before giving our main result, we need the following auxiliary lemma.

\begin{lemma}\label{reasonable condition at infinity}
Under assumptions {\bf (H0)--(H1)} and {\bf (A1)--(A8)}, and let conditions (i), (ii) in Theorem \ref{the31}, \ref{the32} and \ref{the33} hold, respectively. Let $(x_1(\cdot),y_1(\cdot),z_1(\cdot),\tilde{z}_1(\cdot),\gamma_1(\cdot,\cdot))$ and $\mathcal{Z}_1(\cdot)$ satisfy \eqref{x1y1} and \eqref{mathcalZ1}, respectively. Then we have the following limit properties for adjoint processes $(p(\cdot),Q(\cdot),q(\cdot))$:
\begin{equation}\label{transversality condition}
\lim_{t\rightarrow\infty}\mathbb{E}\big[e^{-\beta t}p_ty_{(1,t)}\big]=0,\quad
\lim_{t\rightarrow\infty}\mathbb{E}\big[e^{-\beta t}Q_t\mathcal{Z}_{(1,t)}\big]=0,\quad
\lim_{t\rightarrow\infty}\mathbb{E}\big[e^{-\beta t}q_tx_{(1,t)}\big]=0.
\end{equation}
\end{lemma}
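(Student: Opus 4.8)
The plan is to establish each of the three limits in \eqref{transversality condition} by the same mechanism: split the discount weight as $e^{-\beta t}=e^{-\frac{\beta}{2}t}\cdot e^{-\frac{\beta}{2}t}$, apply the Cauchy--Schwarz inequality in $\omega$ to decouple the adjoint factor from the variational factor, and then argue that one factor tends to zero while the other stays bounded. Concretely, for the first limit I would write
\[
\big|\mathbb{E}[e^{-\beta t}p_ty_{(1,t)}]\big|\le\big(\mathbb{E}[e^{-\beta t}|p_t|^2]\big)^{1/2}\big(\mathbb{E}[e^{-\beta t}|y_{(1,t)}|^2]\big)^{1/2},
\]
and analogously for the pairs $(Q,\mathcal{Z}_{(1)})$ and $(q,x_{(1)})$. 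Thus the whole statement reduces to controlling the weighted second moments of the six processes separately.

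First I would record the decay of the adjoint processes. Since $p(\cdot)\in L^{2,-\beta}_{\mathcal{F}}(0,\infty;\mathbb{R})$ solves the forward SDEP \eqref{fadjoint}, Lemma \ref{lem21} (applied to \eqref{fadjoint} in place of \eqref{ifsdep01}, whose coefficients satisfy the required conditions by the proof of Theorem \ref{the31}) yields $\lim_{t\to\infty}\mathbb{E}[e^{-\beta t}|p_t|^2]=0$. For the backward adjoint processes $Q(\cdot)$ and $q(\cdot)$, which lie in $L^{2,-\beta}_{\mathcal{F}}$ by Theorems \ref{the32} and \ref{the33}, I would invoke the backward counterpart of Lemma \ref{lem21} already used inside the proof of Lemma \ref{lem22}: applying It\^o's formula to $e^{-\beta t}|q_t|^2$ (resp. $e^{-\beta t}|Q_t|^2$) and taking expectations shows that $t\mapsto\mathbb{E}[e^{-\beta t}|q_t|^2]$ is the sum of a constant and an integral of the drift, which is absolutely continuous with an integrable derivative (the drift being integrable on $[0,\infty)$ by the a priori bounds and the integrability of $\tilde G(0,0,0,0)$ verified in Theorem \ref{the33}); hence this map has a finite limit at infinity, and since $q(\cdot)\in L^{2,-\beta}_{\mathcal{F}}$ forces $\int_0^\infty\mathbb{E}[e^{-\beta t}|q_t|^2]dt<\infty$, that limit must be $0$, and likewise for $Q$.

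Next I would bound the variational factors using Lemma \ref{lemma34} with $k=1$. Because $\beta\ge\max_i\beta_i$, the pathwise inequalities $e^{-\beta t}\le e^{-\beta_3 t}$ and $e^{-\beta t}\le e^{-\beta_2 t}$ give $\sup_{t}\mathbb{E}[e^{-\beta t}|y_{(1,t)}|^2]\le\mathbb{E}[\sup_t e^{-\beta_3 t}|y_{(1,t)}|^2]<\infty$ and $\sup_t\mathbb{E}[e^{-\beta t}|x_{(1,t)}|^2]\le\sup_t\mathbb{E}[e^{-\beta_2 t}|x_{(1,t)}|^2]<\infty$, so these two factors are uniformly bounded. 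For the third factor, the unweighted bound $\mathbb{E}[\sup_t|\mathcal{Z}_{(1,t)}|^2]<\infty$ from Lemma \ref{lemma34} gives $\mathbb{E}[e^{-\beta t}|\mathcal{Z}_{(1,t)}|^2]\le e^{-\beta t}\,\mathbb{E}[\sup_t|\mathcal{Z}_{(1,t)}|^2]\to0$. Combining with the previous paragraph, each product in the Cauchy--Schwarz bound is of the form (a factor tending to $0$) $\times$ (a bounded factor), so all three limits in \eqref{transversality condition} vanish.

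The main obstacle is the second step: transferring the decay $\lim_{t\to\infty}\mathbb{E}[e^{-\beta t}|\cdot_t|^2]=0$ from the forward setting of Lemma \ref{lem21} to the backward adjoint equations \eqref{badjoint1} and \eqref{badjoint2}. The delicate point is justifying that the drift terms arising in the It\^o expansion of $e^{-\beta t}|q_t|^2$ are genuinely integrable on $[0,\infty)$ --- this rests on the boundedness of the coefficients $\bar g_x,\bar{\tilde b}_x,\bar\sigma_x,\dots$ together with the memberships $p,\bar{\mathcal{Z}},\tilde M\in L^{2,-\beta}$ and the high-order estimates of Section 3.1 --- so that $t\mapsto\mathbb{E}[e^{-\beta t}|q_t|^2]$ is convergent at infinity; only then does the $L^{2,-\beta}$ integrability pin the limit to zero. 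Everything else is a routine application of Cauchy--Schwarz together with the estimates already proved.
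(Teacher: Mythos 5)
Your proof is correct, and its skeleton is the same as the paper's: split $e^{-\beta t}=e^{-\frac{\beta}{2}t}\cdot e^{-\frac{\beta}{2}t}$, apply Cauchy--Schwarz at each fixed $t$, and reduce \eqref{transversality condition} to the behaviour of the weighted second moments of the six processes. Where you differ is in what happens after this reduction, and there your version is actually more careful than the paper's. The paper asserts $(\mathbb{E}[e^{-\beta t}|p_t|^2])^{1/2}(\mathbb{E}[e^{-\beta t}|y_{(1,t)}|^2])^{1/2}\rightarrow0$ without justification and dispatches the pairs $(Q,\mathcal{Z}_{(1)})$ and $(q,x_{(1)})$ with ``similarly''; it also prefaces this with the claim that finiteness of $\mathbb{E}\int_0^\infty e^{-\beta t}p_ty_{(1,t)}dt$ implies the limit exists, which is not valid reasoning by itself and which you rightly do not use. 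You correctly observe that membership in $L^{2,-\beta}_{\mathcal{F}}$ gives only time-integrated control and no pointwise decay of $t\mapsto\mathbb{E}[e^{-\beta t}|\cdot|^2]$, and you supply the missing mechanisms explicitly: Lemma \ref{lem21} for the forward adjoint $p$, the sup-moment bounds of Lemma \ref{lemma34} (with $\beta\geq\max_i\beta_i$) to make the variational factors bounded and to extract decay from $\mathcal{Z}_{(1)}$, and an It\^{o}-plus-integrability argument (``finite limit plus integrability forces the limit to be zero'') for the backward adjoints $Q$ and $q$ --- this last step is genuinely additional relative to anything written in the paper, and it is the load-bearing step for the third limit, since $x_{(1)}$ is only bounded. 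The one place your sketch leaves real work is exactly where you flag it: absolute integrability of the cross terms $e^{-\beta t}q_t\big(\bar{\mathcal{Z}}_t\bar{f}_x+\bar{g}_xp_t+e^{-\frac{\beta}{2}t}\bar{\mathcal{Z}}_t\bar{h}_x\tilde{M}_t\big)$ needs H\"{o}lder against the sup-moments of $\bar{\mathcal{Z}}$ together with higher-moment bounds on the time integrals of $q$ and $\tilde{M}$ that the paper establishes only for the state processes, not for the adjoints, so the bookkeeping is plausible for $\beta$ large but not free. A shortcut that avoids it entirely: in the construction of Theorem \ref{the33} the approximants $q^i$ vanish identically on $(i,\infty)$, and the a priori estimate behind Lemma \ref{lem22}, applied on $[t,\infty)$ rather than $[0,\infty)$, yields $\sup_{t\geq0}\mathbb{E}\big[e^{-\beta t}|q^i_t-q^j_t|^2\big]\leq\delta^{-1}\mathbb{E}\int_0^\infty e^{-\beta s}|\kappa^i_s-\kappa^j_s|^2ds\rightarrow0$, so the convergence is uniform in this pointwise weighted norm and $\mathbb{E}[e^{-\beta t}|q_t|^2]\rightarrow0$ follows at once; the same device works for $Q$ via Theorem \ref{the32}.
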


\begin{proof}
First, we obtain that there exists unique solutions $(x_1(\cdot),y_1(\cdot),z_1(\cdot),\tilde{z}_1(\cdot),\gamma_1(\cdot,\cdot),\mathcal{Z}_1(\cdot))$ of \eqref{x1y1}, \eqref{mathcalZ1} in $L^{2,-\beta}_{\mathcal{F}}(0,\infty;\mathbb{R}^5)\times L^{2,-\beta}_{\mathcal{F},\nu}(0,\infty;\mathbb{R})$, and $(p(\cdot),Q(\cdot),\tilde{M}(\cdot),q(\cdot),m(\cdot),\tilde{m}(\cdot),n(\cdot,\cdot))$ of \eqref{fbadjoint1} in $L^{2,-\beta}_{\mathcal{F}}(0,\infty;\mathbb{R}^6)\times L^{2,-\beta}_{\mathcal{F},\nu}(0,\infty;\mathbb{R})$, respectively. Therefore, due to
\begin{equation*}
\mathbb{E}\int_0^\infty e^{-\beta t}p_ty_{(1,t)}dt\leq \bigg(\mathbb{E}\int_0^\infty e^{-\beta t}|p_t|^2dt\bigg)^{\frac{1}{2}}\bigg(\mathbb{E}\int_0^\infty e^{-\beta t}|y_{(1,t)}|^2dt\bigg)^{\frac{1}{2}}<\infty,
\end{equation*}
we have $\lim\limits_{t\rightarrow\infty}\mathbb{E}\big[e^{-\beta t}p_ty_{(1,t)}\big]$ exists. Moreover, we can obtain that
\begin{equation*}
\Big|\mathbb{E}\big[e^{-\beta t}p_ty_{(1,t)}\big]\Big|\leq \mathbb{E}\Big[e^{-\frac{\beta}{2} t}|p_t|e^{-\frac{\beta}{2} t}|y_{(1,t)}|\Big]
\leq \Big(\mathbb{E}\big[e^{-\beta t}|p_t|^2\big]\Big)^{\frac{1}{2}}\Big(\mathbb{E}\big[e^{-\beta t}|y_{(1,t)}|^2\big]\Big)^{\frac{1}{2}}\rightarrow0.
\end{equation*}
Therefore, $\varlimsup\limits_{t\rightarrow\infty}\Big|\mathbb{E}\big[e^{-\beta t}p_ty_{(1,t)}\big]\Big|=0$ and $\varliminf\limits_{t\rightarrow\infty}\Big|\mathbb{E}\big[e^{-\beta t}p_ty_{(1,t)}\big]\Big|=0$, which means that\\ $\lim\limits_{t\rightarrow\infty}\Big|\mathbb{E}\big[e^{-\beta t}p_ty_{(1,t)}\big]-0\Big|=0$. We have proved the first limit equality in \eqref{transversality condition}. The other two limit equalities can be proved similarly. The proof is complete.
\end{proof}

\begin{remark}
Compared with the limit equality (42) in \cite{HOP13}, a distinguished difference shows that, in fact, (42) is an assumption condition given in advance. However, similar limit equalities with exponential weighting in \eqref{transversality condition} can be verified immediately in our setting instead of as a hypothesis, which is a self-provable necessary result for acquiring necessary maximum conditions.
\end{remark}

The following theorem is the main result of this paper.

\begin{theorem}
Let {\bf (H0)--(H1)} and {\bf (A1)--(A8)} hold. Let $\bar{u}(\cdot)$ be an optimal control and $(\bar{x}(\cdot),\bar{y}(\cdot),\bar{z}(\cdot),\bar{\tilde{z}}(\cdot),\bar{\gamma}(\cdot,\cdot))$ the optimal state satisfying \eqref{iffbsdep01} (or equivalently, \eqref{iffbsdep}). Then there exists processes $p(\cdot)\in L^{2,-\beta}_{\mathcal{F}}(0,\infty;\mathbb{R})$, $(Q(\cdot),\tilde{M}(\cdot))\in L^{2,-\beta}_{\mathcal{F}}(0,\infty;\mathbb{R}^2)$ and $(q(\cdot),m(\cdot),\tilde{m}(\cdot),n(\cdot,\cdot))\\\in L^{2,-\beta}_{\mathcal{F}}(0,\infty;\mathbb{R}^3)\times L^{2,-\beta}_{\mathcal{F},\nu}(0,\infty;\mathbb{R})$ satisfying \eqref{fbadjoint1} such that the maximum condition holds:
\begin{equation}\label{ergodic maximum principle}
\mathbb{E}\big[\bar{H}_u|\mathcal{F}^\xi_t\big]\geq0,\quad a.e.\ t\in[0,\infty),\ \mathbb{P}\text{-}a.s.,
\end{equation}
where the Hamiltonian function $H$ is defined as \eqref{Hamiltonian}. Here the discount parameter $\beta$ can be chosen such that the previous lemmas and theorems hold.
\end{theorem}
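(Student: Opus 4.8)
The plan is to convert the variational inequality \eqref{variation inequality} into the pointwise condition \eqref{ergodic maximum principle} by means of three duality relations, one attached to each adjoint equation, each obtained by applying It\^{o}'s formula to the product of a variational process with the corresponding adjoint process over the whole half--line and then letting the horizon tend to infinity.

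First I would apply It\^{o}'s formula to $e^{-\beta t}p_t y_{(1,t)}$, using the forward adjoint equation \eqref{fadjoint} together with the backward variational equation for $y_{(1,\cdot)}$ in \eqref{x1y1}. Equation \eqref{fadjoint} is engineered so that the contributions carrying $\bar{g}_y,\bar{g}_z,\bar{g}_{\tilde{z}},\bar{g}_\gamma$ cancel exactly against the drift and It\^{o}-cross terms coming from $y_{(1,\cdot)}$, while the summand $\beta p_t$ absorbs the $\beta$ produced when differentiating $e^{-\beta t}$. Integrating on $[0,\infty)$, using $p_0=\phi_y(\bar{y}_0)$, the first transversality limit in \eqref{transversality condition} to annihilate the boundary term at infinity, and the zero mean of the $W,\xi,\tilde{N}$ martingale integrals, I obtain an identity expressing $\mathbb{E}[\phi_y(\bar{y}_0)y_{(1,0)}]$ through $\mathbb{E}\int_0^\infty e^{-\beta t}\big(-p\bar{g}_x x_{(1)}+\bar{\mathcal{Z}}\bar{f}_y y_{(1)}+\bar{\mathcal{Z}}\bar{f}_z z_{(1)}+\cdots+p\bar{g}_u v\big)dt$. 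Substituting into \eqref{variation inequality}, the $y_{(1)},z_{(1)},\tilde{z}_{(1)},\gamma_{(1)}$ contributions cancel, leaving an inequality containing only $x_{(1,t)}$ and $\mathcal{Z}_{(1,t)}$ terms besides the control term.

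Next I would apply It\^{o}'s formula to $e^{-\beta t}Q_t\mathcal{Z}_{(1,t)}$ via \eqref{badjoint1} and \eqref{mathcalZ1}; here the quadratic cross--variation between $\tilde{M}\,d\xi$ and the $\bar{h}\mathcal{Z}_{(1)}\,d\xi$ part of \eqref{mathcalZ1} cancels the $e^{-\frac{\beta}{2}t}\bar{h}\tilde{M}\mathcal{Z}_{(1)}$ drift, and, with $\mathcal{Z}_{(1,0)}=0$ and the second limit in \eqref{transversality condition}, this expresses $\mathbb{E}\int_0^\infty e^{-\beta t}\bar{f}\mathcal{Z}_{(1)}\,dt$ in terms of $x_{(1)}$ and $v$. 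Finally I would apply It\^{o}'s formula to $e^{-\beta t}q_t x_{(1,t)}$, using \eqref{badjoint2} and the forward variational equation for $x_{(1,\cdot)}$ in \eqref{x1y1}; the $\bar{\tilde{b}}_x,\bar{\sigma}_x,\bar{\tilde{\sigma}}_x,\bar{l}_x$ terms cancel against the bracket contributions, and with $x_{(1,0)}=0$ and the third limit in \eqref{transversality condition} the remaining coefficient $\mathbb{E}\int_0^\infty e^{-\beta t}\big(p\bar{g}_x+\bar{\mathcal{Z}}\bar{f}_x+e^{-\frac{\beta}{2}t}\bar{\mathcal{Z}}\bar{h}_x\tilde{M}\big)x_{(1)}\,dt$ is rewritten purely through the control direction via $q\bar{\tilde{b}}_u,\,m\bar{\sigma}_u,\,\tilde{m}\bar{\tilde{\sigma}}_u,\,n\bar{l}_u$. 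Assembling the three relations into \eqref{variation inequality}, every variational process cancels and, recognizing the surviving coefficient of $v_t$ as $\bar{H}_u$ built from \eqref{Hamiltonian}, I reach $\mathbb{E}\int_0^\infty e^{-\beta t}\bar{H}_u v_t\,dt\ge 0$ for all $v(\cdot)\in\mathcal{U}_{ad}[0,\infty]$. Conditioning on $\mathcal{F}_t^\xi$, which is legitimate because admissible directions are $\mathcal{F}_t^\xi$--adapted, and exploiting the arbitrariness of $v$ then localizes this to the pointwise condition \eqref{ergodic maximum principle}.

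The main obstacle is not the algebra of cancellations, which the adjoint system is precisely designed to produce, but the rigorous justification of the infinite--horizon It\^{o} expansions: one must check that each of $e^{-\beta t}p_t y_{(1,t)}$, $e^{-\beta t}Q_t\mathcal{Z}_{(1,t)}$, $e^{-\beta t}q_t x_{(1,t)}$ is integrable enough that the $W,\xi,\tilde{N}$ integrals are genuine martingales of zero expectation and that the endpoint terms vanish as $T\to\infty$. This is exactly where the weighted high--order estimates of Lemmas \ref{lemma31}--\ref{lemma34}, controlling $x_{(1)},y_{(1)},z_{(1)},\tilde{z}_{(1)},\gamma_{(1)},\mathcal{Z}_{(1)}$ and $\bar{\mathcal{Z}}$ simultaneously, together with the transversality limits of Lemma \ref{reasonable condition at infinity}, become indispensable; the extra factor $\bar{\mathcal{Z}}$ generated by the Girsanov change of measure is what makes these integrability verifications the delicate point, and it dictates the condition $\beta\ge\max_i\{\beta_i\}$ under which all the preceding lemmas hold.
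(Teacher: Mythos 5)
Your proposal is correct and follows essentially the same route as the paper: the paper applies It\^{o}'s formula to the single combination $-e^{-\beta t}p_ty_{(1,t)}+e^{-\beta t}Q_t\mathcal{Z}_{(1,t)}+e^{-\beta t}q_tx_{(1,t)}$ on $[0,T]$, lets $T\to\infty$ using the transversality limits of Lemma \ref{reasonable condition at infinity}, substitutes into the variational inequality \eqref{variation inequality} to get $\mathbb{E}\int_0^\infty e^{-\beta t}\bar{H}_u v_t\,dt\ge 0$, and conditions on $\mathcal{F}_t^\xi$ -- exactly your three duality relations carried out simultaneously rather than one at a time. Your added emphasis on verifying the martingale property and integrability of the stochastic integrals via the weighted estimates of Lemmas \ref{lemma31}--\ref{lemma34} is a sound (and welcome) elaboration of what the paper leaves implicit, not a deviation.
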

\begin{proof}
First, for any $T\in[0,\infty)$, applying It\^{o}'s formula to $-e^{-\beta t}p_ty_{(1,t)}+e^{-\beta t}Q_t\mathcal{Z}_{(1,t)}+e^{-\beta t}q_tx_{(1,t)}$ on $[0,T]$ and taking expectation on both sides, where $x_{(1,t)},(y_{(1,t)},z_{(1,t)},\tilde{z}_{(1,t)},\gamma_{(1,t,e)}),\mathcal{Z}_{(1,t)}$ and $p_t,(Q_t,\tilde{M}_t),(q_t,m_t,\tilde{m}_t,n_{(t,e)})$ satisfy variational equations \eqref{x1y1}, \eqref{mathcalZ1} and adjoint equations \eqref{fadjoint}, \eqref{badjoint1} and \eqref{badjoint2}, respectively. Then taking $T\rightarrow\infty$ and substituting it into the variational inequality \eqref{variation inequality}, it follows, for any $v(\cdot)\in\mathcal{U}_{ad}[0,\infty]$, that
\begin{equation*}
\mathbb{E}\int_0^\infty e^{-\beta t}\bar{H}_uv_tdt\geq0,
\end{equation*}
i.e.
\begin{equation*}
\mathbb{E}\int_0^\infty e^{-\beta t}\mathbb{E}[\bar{H}_u|\mathcal{F}_t^\xi]v_tdt\geq0.
\end{equation*}
Utilizing the limit properties in Lemma \ref{reasonable condition at infinity}, we obtain \eqref{ergodic maximum principle}. The proof is complete.
\end{proof}

\section{Comparison with Existing Results and Conclusion}

In this section, we first give two kinds of existing infinite horizon stochastic systems in the literature, together with their related optimal control problems. Some comparison are made with our results in this paper.

To our best knowledge, there exist two kinds of infinite horizon stochastic systems for researchers, which are different in some sense. The first one, which could be traced to the general case in \cite{Peng91}, was a class of BSDE with terminal time being an $\mathcal{F}_t$-stopping time $\tau$ taking values in $[0,\infty]$ which admits a unique solution $(p(\cdot),q(\cdot))\in M^{2,K}(0,\tau;\mathbb{R}^m)\times M^{2,K}(0,\tau;\mathbb{R}^{m\times d})$ with the square-integrable $\mathcal{F}_\tau$-measure terminal value $p_{\tau}=X$. Meanwhile, the existence and uniqueness result holds for the interesting case that $\tau=\infty,X=0$, which is the so called infinite horizon case. That is, the value of $p(\cdot)$ is given at $t=\infty$. In this framework, many results were developed. We only mention some classical literatures such as Darling and Pardoux \cite{DP97} and Pardoux\cite{Pardoux99}, which gave the existence and uniqueness results of solutions to BSDEs with random terminal time under some weaker conditions for the coefficients. For example, generator $f$ are not Lipschitz but monotonicity with respect to $y$. However, the stronger assumptions in \cite{Peng91}, $f$ satisfies Lipschitz condition with respect to both $p$ and $q$, are given.

The second one, which was introduced to be an auxiliary result to prove the uniquely solvability of fully coupled infinite horizon FBSDE in \cite{PS00}, was a kind of infinite horizon BSDE in $[0,\infty)$. It is worth noting that the kind of infinite horizon BSDE did not consider the terminal value at $\infty$ in general. Similarly, the conditions are reinforced by assuming that $G$ is uniformly Lipschitz in $Y$ in \cite{PS00}, and the existence and uniqueness result is proved by a different technique. Some result are extended to the case with random jumps in \cite{Yu17} and the case in a bigger space in \cite{SZ20} under the same strong assumptions. More details, we only make an analysis for the infinite horizon BSDE (3.3) in \cite{PS00} as follows:
\begin{equation*}
-dY_t=\big[G(t,Y_t,Z_t)+\varphi_t\big]dt-Z_tdB_t,\quad t\in[0,\infty).
\end{equation*}
In fact, the infinite horizon BSDE above can be regraded as a general form containing the following equation
\begin{equation}\label{ifbsde}
-dY_t=g(t,Y_t,Z_t)dt-Z_tdB_t,\ t\in[0,\infty),
\end{equation}
when we set $G(t,Y_t,Z_t):=g(t,Y_t,Z_t)-g(t,0,0)$ and $\varphi_t=g(t,0,0)$. Therefore, the Lipschitz property of $G$ with respect to $(Y,Z)$ is equivalent to the Lipschitz property of $g$ with respect to $(Y,Z)$. Then the existence and uniqueness results of solutions to a class of infinite horizon BSDE like \eqref{ifbsde} with generator $g$ can be guaranteed similarly as (3.3) in \cite{PS00}, even for the case with random jumps.
Moreover, we also mention the adjoint equations introduced in \cite{MV14} and \cite{OV17} which are also a class of infinite horizon BSDEs without terminal value at $\infty$, where the corresponding generator of adjoint equation in \cite{MV14} is not Lipschitz in $p$ but Lipschitz in $q$ and that in \cite{OV17} is not Lipschitz neither in $p$ nor $q$, due to the weak monotonicity condition and polynomial growth conditions satisfied by drift and diffusion coefficients in the corresponding (forward) SDE on infinite horizon, respectively. Although the two cases are solved following the same technique of \cite{PS00} in the final step, the auxiliary Theorem 4.1 in \cite{BDHPS03} is utilized necessarily to guarantee the uniquely solvability of their BSDE on the truncated interval $[0,k]$ for preparation. Meanwhile, if we reinforce the conditions for drift and diffusion coefficients in \cite{MV14}, \cite{OV17} being both Lipschitz in $x$, then the problem is naturally simplified and the result also holds.

By comparison, there is an another important difference to point out that the classical methods in \cite{Peng91} is not adapted to the infinite horizon case considered in \cite{PS00}. In other words, the method, which was introduced to solve the uniquely solvability of BSDEs with stopping time, can only be applied especially in a special infinite horizon BSDE when considering $\tau=\infty$ and giving its corresponding value $X=0$. However, for the general infinite horizon BSDE on $[0,\infty)$ where the value at $\infty$ is not given in general in \cite{PS00}, it is ineffective.

For the above two kinds of infinite horizon BSDEs, theories are also applied to the optimal control problem and some interesting results are obtained. For the BSDE with random terminal time, Haadem et al. \cite{HOP13} extended the results to random jumps of Theorem 4.1 in \cite{Pardoux99}. And more importantly, they obtained the sufficient and necessary maximum principles by firstly introducing the limit inequalities in Theorem 4.1 and Theorem 6.1. Because it may be incorrect that the requirement in the finite horizon case that $p(T)=0$ is translated directly into $\lim\limits_{T\rightarrow\infty}p(T)=0$ for the infinite horizon, which was firstly shown in the deterministic case in Shell \cite{Shell69} and Halkin \cite{Halkin74}. In fact, they demonstrated by means of counterexamples that abnormality is possible, and the ``natural" asymptotic conditions like $\lim\limits_{t\rightarrow\infty}p(t)=0$ and $\lim\limits_{t\rightarrow\infty}\left\langle p(t),x(t)\right\rangle=0$ may be violated in the case of infinite horizon problems with free terminal state at infinity. Noting that the discount rate $\beta$ vanishes $(\beta=0)$ in the cost functional similar to the counterexample, it is for a long time that the opinion was common in the economic literature that such pathologies were possible only in the case without exponential discount factor. However, there still exist some counterexamples showing the ``pathological" in the case with positive discount rate nowadays (see Aseev and Kryazhimskii \cite{AK07}, Chapter 1, \S 6). Therefore, they give the limit inequality (16) in \cite{HOP13} to replace the condition satisfied by $p(T)$ at $T=\infty$ indirectly, when applying It\^{o}'s formula during the dual process without $\lim\limits_{T\rightarrow\infty}p(T)=0$. The same thing happens as the condition (42) in \cite{HOP13}, which is a direct assumption given by the authors. However, as far as we know, for some maximum principles discussed in the deterministic case (Aseev and Veliov \cite{AV19}), similar condition as (42) actually only be implied by introducing a kind of Cauchy-type formula and adding some complementary conditions instead of giving immediately. Therefore, we guess it is possible that some additional conditions are required to imply these kinds of asymptotic property of adjoint process in the stochastic case, which would be investigated in our future work.

Another feature in their framework shows that the integral part for running cost in the cost functional is defined similarly as that in finite horizon except for substituting terminal $T$ into $\infty$. In fact, a comparatively strong assumption was given in \cite{HOP13} to guarantee that the cost functional is well-posed. In other words, it is possible for the cost functional to be ill-defined over the infinite horizon without their assumption. Then, the so-called overtaking optimization approach was introduced by von Weizs\"{a}cker \cite{vW65} in 1965 to treat the kind of situation by ``approximately" comparing the values of the cost functional over every finite interval. For this aspect of study, we only mention some recent development in deterministic case (\cite{AV19}) and in stochastic case (Huang et al. \cite{HYZ21}). However, in the second case of infinite horizon BSDE, the same problem can be solved in a different way. We have mentioned that the kind of BSDE on $[0,\infty)$ and its related optimal control problem were considered in \cite{MV14} and \cite{OV17} where they consider the infinite objective integral in the cost functionals with exponential discounting, and they consider the exponential weighting solution space $L^{2,-\beta}_{\mathcal{F}}(\mathbb{R}^+,\mathbb{R}^n)$. We can find that if the adjoint equation admits a unique solution $(p(\cdot),q(\cdot))\in L^{2,-\beta}_{\mathcal{F}}(\mathbb{R}^+,\mathbb{R}^n)\times L^{2,-\beta}_{\mathcal{F}}(\mathbb{R}^+,\mathbb{R}^{n\times d})$, i.e., $\mathbb{E}\int_0^\infty e^{-\beta t}|p_t|^2dt<\infty$, it is easy to obtain that $\lim\limits_{t\rightarrow\infty}\mathbb{E}\big[e^{-\beta t}|p_t|^2\big]=0$, which is more natural used in the dual technique avoiding the limit inequality assumption in \cite{HOP13}. Therefore, we can not give an ideal condition satisfied by $p(\cdot)$ at terminal time like finite horizon case, $p(T)=0$, but we still could give a comparatively reasonable result, i.e., a kind of exponential weighting form for $p(\cdot)$ with expectation is converging to zero when the terminal time goes to infinity (see Lemma \ref{reasonable condition at infinity}).

\section*{Acknowledgement}

Many thanks for discussion and suggestions with Dr. Yuanzhuo Song at Shandong University.

\section*{Appendix}

In the appendix, we give some proofs of the results in Section 3.

\emph{Proof of Lemma \ref{lemma31}}. Applying It\^{o}'s formula to $e^{-\beta kt}|x^u_t|^{2k}$, and by {\bf (A1)--(A4)}, we have
\begin{equation}\label{x2k}
\begin{aligned}
&\mathbb{E}\big[e^{-\beta kt}|x^u_t|^{2k}\big]\leq\mathbb{E}|x_0|^{2k}+\mathbb{E}\int_0^t e^{-\beta ks}\bigg[2k|x_s^u|^{2k-1}\tilde{b}(0,0)+2\mu_1k|x_s^u|^{2k}+2k|x^u_s|^{2k-1}|u_s|\\
&\quad+k(2k-1)|x^u_s|^{2k-2}\big(\sigma^2(0,0)+\tilde{\sigma}^2(0,0)\big)+k(2k-1)|x^u_s|^{2k}+k(2k-1)|x^u_s|^{2k-2}|u_s|^2\\
&\quad+\int_{\mathcal{E}}\Big(|x_s^u|^{2k}+C_k\big(|l(0,0,e)|^{2k}+|x_s^u|^{2k}+|u_s|^{2k}\big)+2k|x^u_s|^{2k-1}l(0,0,e)+2k|x^u_s|^{2k}\\
&\quad+2k|x^u_s|^{2k-1}|u_s|\Big)\nu(de)\bigg]ds-\beta k\mathbb{E}\int_0^t e^{-\beta ks}|x_s^u|^{2k}ds.
\end{aligned}
\end{equation}
By the weighted Young's inequality and H\"{o}lder's inequality, we can get
\begin{equation*}
\begin{aligned}
&\mathbb{E}\int_0^t e^{-\beta ks}2k|x_s^u|^{2k-1}|u_s|ds=2k\int_0^t \mathbb{E}\big[e^{-\beta ks+\frac{\beta}{2}s}|x_s^u|^{2k-1}e^{-\frac{\beta}{2}s}|u_s|\big]ds\\
&\quad \leq 2k\int_0^t\big(\mathbb{E}\big[e^{-\beta ks}|x_s^u|^{2k}\big]\big)^{\frac{2k-1}{2k}}\big(\mathbb{E}\big[e^{-\beta ks}|u_s|^{2k}\big]\big)^{\frac{1}{2k}}ds\\
\end{aligned}
\end{equation*}
\begin{equation*}
\begin{aligned}
&\quad \leq 2k\Big(\sup_{t\in\mathbb{R}_+}\mathbb{E}\big[e^{-\beta ks}|x_s^u|^{2k}\big]\Big)^{\frac{2k-1}{2k}}\int_0^\infty\big(\mathbb{E}\big[e^{-\beta ks}|u_s|^{2k}\big]\big)^{\frac{1}{2k}}ds\\
&\quad \leq (2k-1)\delta^{\frac{2k}{2k-1}}\Big(\sup_{t\in\mathbb{R}_+}\mathbb{E}\big[e^{-\beta ks}|x_s^u|^{2k}\big]\Big)
+\frac{1}{\delta^{2k}}\bigg(\int_0^\infty\big(\mathbb{E}\big[e^{-\beta ks}|u_s|^{2k}\big]\big)^{\frac{1}{2k}}ds\bigg)^{2k},
\end{aligned}
\end{equation*}
\begin{equation*}
\begin{aligned}
&\mathbb{E}\int_0^t e^{-\beta ks}k(2k-1)|x_s^u|^{2k-2}|u_s|^2ds=k(2k-1)\int_0^t \mathbb{E}\big[e^{-\beta (k-1)s}|x_s^u|^{2k-2}e^{-\beta s}|u_s|^2\big]ds\\
&\quad \leq k(2k-1)\int_0^t\big(\mathbb{E}\big[e^{-\beta ks}|x_s^u|^{2k}\big]\big)^{\frac{k-1}{k}}\big(\mathbb{E}\big[e^{-\beta ks}|u_s|^{2k}\big]\big)^{\frac{1}{k}}ds\\
&\quad \leq k(2k-1)\Big(\sup_{t\in\mathbb{R}_+}\mathbb{E}\big[e^{-\beta ks}|x_s^u|^{2k}\big]\Big)^{\frac{k-1}{k}}\int_0^\infty\big(\mathbb{E}\big[e^{-\beta ks}|u_s|^{2k}\big]\big)^{\frac{1}{k}}ds\\
&\quad \leq (k-1)(2k-1)\delta^{\frac{k}{k-1}}\Big(\sup_{t\in\mathbb{R}_+}\mathbb{E}\big[e^{-\beta ks}|x_s^u|^{2k}\big]\Big)
+\frac{(2k-1)}{\delta^{k}}\bigg(\int_0^\infty\big(\mathbb{E}\big[e^{-\beta ks}|u_s|^{2k}\big]\big)^{\frac{1}{k}}ds\bigg)^k,\\
&\mathbb{E}\int_0^t e^{-\beta ks}\int_{\mathcal{E}}2k|x_s^u|^{2k-1}|u_s|\nu(de)ds=2k\int_0^t\int_{\mathcal{E}}\mathbb{E}\big[e^{-\beta ks}|x_s^u|^{2k-1}|u_s|\big]\nu(de)ds\\
&\quad \leq C(2k-1)\delta^{\frac{2k}{2k-1}}\Big(\sup_{t\in\mathbb{R}_+}\mathbb{E}\big[e^{-\beta ks}|x_s^u|^{2k}\big]\Big)
+\frac{C}{\delta^{2k}}\bigg(\int_0^\infty\big(\mathbb{E}\big[e^{-\beta ks}|u_s|^{2k}\big]\big)^{\frac{1}{2k}}ds\bigg)^{2k}.
\end{aligned}
\end{equation*}
Similarly, we have
\begin{equation*}
\begin{aligned}
&\mathbb{E}\int_0^t e^{-\beta ks}2k|x_s^u|^{2k-1}|\tilde{b}(0,0)|ds+\mathbb{E}\int_0^t e^{-\beta ks}k(2k-1)|x_s^u|^{2k-2}(|\sigma(0,0)|^2+|\tilde{\sigma}(0,0)|^2)ds\\
&\quad +\mathbb{E}\int_0^t e^{-\beta ks}2k|x_s^u|^{2k-1}\int_{\mathcal{E}}|\tilde{l}(0,0,e)|\nu(de)ds\\
&\leq C_{\delta,k}\sup_{t\in\mathbb{R}_+}\mathbb{E}\big[e^{-\beta ks}|x_s^u|^{2k}\big]
+\frac{1}{\delta^{2k}}\bigg(\int_0^\infty e^{-\frac{\beta}{2}s}\big(\mathbb{E}|\tilde{b}(0,0)|^{2k}\big)^{\frac{1}{2k}}ds\bigg)^{2k}\\
&\quad +\frac{(2k-1)}{\delta^{k}}\bigg(\int_0^\infty e^{-\beta s}\big(\mathbb{E}|\sigma(0,0)|^{2k}\big)^{\frac{1}{k}}ds\bigg)^k+\frac{(2k-1)}{\delta^{k}}
\bigg(\int_0^\infty e^{-\beta s}\big(\mathbb{E}|\tilde{\sigma}(0,0)|^{2k}\big)^{\frac{1}{k}}ds\bigg)^k\\
&\quad +\frac{C_k}{\delta^{2k}}\bigg(\int_0^\infty e^{-\frac{\beta}{2}s}\bigg(\mathbb{E}\int_{\mathcal{E}}|l(0,0,e)|^{2k}\nu(de)\bigg)^{\frac{1}{2k}}ds\bigg)^{2k}.
\end{aligned}
\end{equation*}
Therefore, \eqref{x2k} becomes
\begin{equation*}
\begin{aligned}
&\mathbb{E}\big[e^{-\beta kt}|x^u_t|^{2k}\big]\leq\mathbb{E}|x_0|^{2k}+(C_{\mu_1,k}-\beta k)\mathbb{E}\int_0^t e^{-\beta ks}|x_s^u|^{2k}ds+C_k\mathbb{E}\int_0^\infty e^{-\beta ks}|u_s|^{2k}ds\\
&\quad+C_{\delta,k}\bigg(\int_0^\infty\big(\mathbb{E}\big[e^{-\beta ks}|u_s|^{2k}\big]\big)^{\frac{1}{2k}}ds\bigg)^{2k}
 +C_{\delta,k}\bigg(\int_0^\infty\big(\mathbb{E}\big[e^{-\beta ks}|u_s|^{2k}\big]\big)^{\frac{1}{k}}ds\bigg)^k\\
&\quad+C_{k,\delta}\sup_{s\in\mathbb{R}_+}\mathbb{E}\big[e^{-\beta ks}|x_s^u|^{2k}\big]+C_{k,\delta}\bigg(\int_0^\infty e^{-\frac{\beta}{2}s}\big(\mathbb{E}|\tilde{b}(0,0)|^{2k}\big)^{\frac{1}{2k}}ds\bigg)^{2k}\\
&\quad+C_{k,\delta}\bigg(\int_0^\infty e^{-\beta s}\big(\mathbb{E}|\sigma(0,0)|^{2k}\big)^{\frac{1}{k}}ds\bigg)^k
 +C_{k,\delta}\bigg(\int_0^\infty e^{-\beta s}\big(\mathbb{E}|\tilde{\sigma}(0,0)|^{2k}\big)^{\frac{1}{k}}ds\bigg)^k\\
&\quad+C_{k,\delta}\bigg(\int_0^\infty e^{-\frac{\beta}{2}s}\bigg(\mathbb{E}\int_{\mathcal{E}}|l(0,0,e)|^{2k}\nu(de)\bigg)^{\frac{1}{2k}}ds\bigg)^{2k}
 +\mathbb{E}\int_0^\infty e^{-\beta ks}\int_{\mathcal{E}}|l(0,0,e)|^{2k}\nu(de)ds.
\end{aligned}
\end{equation*}
Choose suitable $\mu_1$ such that $C_{\mu_1,k}-\beta k<0$, then take suitable $\delta(k)$ such that $1-C_{k,\delta}>0$, it is not hard to obtain the following estimate
\begin{equation*}
\begin{aligned}
&\sup_{t\in\mathbb{R}_+}\mathbb{E}\big[e^{-\beta_2 kt}|x^u_t|^{2k}\big]\leq C_{k}\bigg\{\mathbb{E}|x_0|^{2k}+\mathbb{E}\int_0^\infty e^{-\beta_1 ks}|u_s|^{2k}ds\\
&\quad+\bigg(\int_0^\infty e^{-\beta_5s}\big(\mathbb{E}|\tilde{b}(0,0)|^{2k}\big)^{\frac{1}{2k}}ds\bigg)^{2k}+\bigg(\int_0^\infty e^{-\beta_5 s}\big(\mathbb{E}|\sigma(0,0)|^{2k}\big)^{\frac{1}{k}}ds\bigg)^k\\
&\quad+\bigg(\int_0^\infty e^{-\beta_5 s}\big(\mathbb{E}|\tilde{\sigma}(0,0)|^{2k}\big)^{\frac{1}{k}}ds\bigg)^k
 +\bigg(\int_0^\infty e^{-\beta_5s}\bigg(\mathbb{E}\int_{\mathcal{E}}|l(0,0,e)|^{2k}\nu(de)\bigg)^{\frac{1}{2k}}ds\bigg)^{2k}\\
&\quad+\mathbb{E}\int_0^\infty e^{-\beta_5 ks}\int_{\mathcal{E}}|l(0,0,e)|^{2k}\nu(de)ds\bigg\}.
\end{aligned}
\end{equation*}
where $\beta_1\leq\beta_2,\beta_5\leq\beta_2$. Therefore, the first estimate in \eqref{xy2k} has been proved.

Then we prove the second estimate in \eqref{xy2k}. Applying It\^{o}'s formula to $e^{-\beta t}|y^u_t|^2$, we have
\begin{equation}\label{itoy2}
\begin{aligned}
&|y^u_0|^2+\int_0^Te^{-\beta t}(|z_t^u|^2+|\tilde{z}_t^u|^2)dt+\int_0^T\int_\mathcal{E}e^{-\beta t}|\gamma_{(t,e)}^u|^2N(de,dt)\\
&=e^{-\beta T}|y_T^u|^2+\int_0^T\big[\beta e^{-\beta t}|y_t^u|^2+2e^{-\beta t}y^u_tg\big(x^u_t,y^u_t,z^u_t,\tilde{z}^u_t,\gamma^u_{(t,e)},u_t\big)\big]dt\\
&\quad-\int_0^T2e^{-\beta t}y^u_tz^u_tdW_t-\int_0^T2e^{-\beta t}y^u_t\tilde{z}^u_td\xi_t-\int_0^T\int_{\mathcal{E}}2e^{-\beta t}y^u_t\gamma^u_{(t,e)}\tilde{N}(de,dt).
\end{aligned}
\end{equation}
By {\bf (A6)-(A7)}, we have
\begin{equation*}
\begin{aligned}
&2e^{-\beta t}y^ug(x^u,y^u,z^u,\tilde{z}^u,\gamma^u,u)\leq e^{-\beta t}\Big[2\big(\mu_2+e^{-K_1t}+e^{-K_2t}+e^{-K_3t}+e^{-K_4t}\big)|y^u|^2\\
&\qquad+\frac{1}{2}\big(|z^u|^2+|\tilde{z}^u_t|^2+||\gamma^u||^2+|u|^2\big)+2y^ug(x^u,0,0,0,0,0)\Big],
\end{aligned}
\end{equation*}
and we obtain
\begin{equation*}
\begin{aligned}
&\frac{1}{2}\int_0^Te^{-\beta t}\big(|z^u_t|^2+|\tilde{z}^u_t|^2\big)dt+\int_0^T\int_{\mathcal{E}}e^{-\beta t}|\gamma^u_{(t,e)}|^2N(de,dt)\\
&\leq e^{-\beta T}|y^u_T|^2+\int_0^T\Big[\beta e^{-\beta t}|y^u_t|^2+e^{-\beta t}\big[2\big(\mu_2+e^{-K_1t}+e^{-K_2t}+e^{-K_3t}+e^{-K_4t}\big)|y^u_t|^2\\
&\quad+\frac{1}{2}\big(||\gamma^u_{(t,e)}||^2+|u_t|^2\big)+2y^u_tg(x^u_t,0,0,0,0,0)\big]\Big]dt-\int_0^T2e^{-\beta t}y^u_tz^u_tdW_t\\
&\quad-\int_0^T2e^{-\beta t}y^u_t\tilde{z}^u_td\xi_t-\int_0^T\int_{\mathcal{E}}2e^{-\beta t}y^u_t\gamma^u_{(t,e)}\tilde{N}(de,dt).
\end{aligned}
\end{equation*}
By {\bf (A8)}, for any $\beta>0$, we get
\begin{equation*}
\begin{aligned}
&\int_0^Te^{-\beta t}|z^u_t|^2dt+\int_0^Te^{-\beta t}|\tilde{z}^u_t|^2dt+\int_0^T\int_{\mathcal{E}}e^{-\beta t}|\gamma^u_{(t,e)}|^2N(de,dt)\\
&\leq C\bigg\{\sup_{t\in\mathbb{R}_+}e^{-\beta t}|y^u_t|^2+\frac{1}{2}\int_0^T\int_{\mathcal{E}}e^{-\beta t}|\gamma^u_{(t,e)}|^2\nu(de)dt+\frac{1}{2}\int_0^Te^{-\beta t}|u_t|^2dt\\
\end{aligned}
\end{equation*}
\begin{equation*}
\begin{aligned}
&\qquad +2\int_0^Te^{-\beta t}y^u_tg(x^u_t,0,0,0,0,0)dt-\int_0^T2e^{-\beta t}y^u_tz^u_tdW_t\\
&\qquad -\int_0^T2e^{-\beta t}y^u_t\tilde{z}^u_td\xi_t-\int_0^T\int_{\mathcal{E}}2e^{-\beta t}y^u_t\gamma^u_{(t,e)}\tilde{N}(de,dt)\bigg\}.
\end{aligned}
\end{equation*}
Furthermore, we can deduce that
\begin{equation}\label{ineq03}
\begin{aligned}
&\mathbb{E}\bigg|\int_0^Te^{-\beta t}|z^u_t|^2dt\bigg|^k+\mathbb{E}\bigg|\int_0^Te^{-\beta t}|\tilde{z}^u_t|^2dt\bigg|^k+\mathbb{E}\bigg|\int_0^T\int_{\mathcal{E}}e^{-\beta t}|\gamma^u_{(t,e)}|^2N(de,dt)\bigg|^k\\
&\leq C_k\bigg\{\mathbb{E}\bigg[\sup_{t\in\mathbb{R}_+}e^{-\beta kt}|y^u_t|^{2k}\bigg]+\frac{1}{2^k}\mathbb{E}\bigg|\int_0^T\int_{\mathcal{E}}e^{-\beta t}|\gamma^u_{(t,e)}|^2\nu(de)dt\bigg|^k
 +\frac{1}{2^k}\mathbb{E}\bigg|\int_0^Te^{-\beta t}|u_t|^2dt\bigg|^k\\
&\qquad +\mathbb{E}\bigg|\int_0^Te^{-\beta t}y^u_tg(x^u_t,0,0,0,0,0)dt\bigg|^k+\mathbb{E}\bigg|\int_0^T2e^{-\beta t}y^u_tz^u_tdW_t\bigg|^k\\
&\qquad +\mathbb{E}\bigg|\int_0^T2e^{-\beta t}y^u_t\tilde{z}^u_td\xi_t\bigg|^k+\mathbb{E}\bigg|\int_0^T\int_{\mathcal{E}}2e^{-\beta t}y^u_t\gamma^u_{(t,e)}\tilde{N}(de,dt)\bigg|^k\bigg\}.
\end{aligned}
\end{equation}
Note that
\begin{equation*}
\begin{aligned}
&\mathbb{E}\bigg|\int_0^Te^{-\beta t}y^u_tz^u_tdW_t\bigg|^k\leq C_k\mathbb{E}\bigg|\int_0^Te^{-2\beta t}|y^u_t|^2|z^u_t|^2dt\bigg|^\frac{k}{2}\\
&\leq C_k\mathbb{E}\bigg[\sup_{t\in\mathbb{R}_+}e^{-\frac{\beta kt}{2}}|y^u_t|^k\bigg|\int_0^Te^{-\beta t}|z^u_t|^2dt\bigg|^\frac{k}{2}\bigg]\\
&\leq \frac{C_k}{2}\mathbb{E}\bigg[\sup_{t\in\mathbb{R}_+}e^{-\beta kt}|y^u_t|^{2k}\bigg]+\frac{C_k}{2}\mathbb{E}\bigg|\int_0^\infty e^{-\beta t}|z^u_t|^2dt\bigg|^k,
\end{aligned}
\end{equation*}
\begin{equation*}
\mathbb{E}\bigg|\int_0^Te^{-\beta t}y^u_t\tilde{z}^u_td\xi_t\bigg|^k\leq \frac{C_k}{2}\mathbb{E}\bigg[\sup_{t\in\mathbb{R}_+}e^{-\beta kt}|y^u_t|^{2k}\bigg]
+\frac{C_k}{2}\mathbb{E}\bigg|\int_0^\infty e^{-\beta t}|\tilde{z}^u_t|^2dt\bigg|^k,
\end{equation*}
and
\begin{equation*}
\begin{aligned}
&\mathbb{E}\bigg|\int_0^T\int_{\mathcal{E}}e^{-\beta t}y^u_t\gamma^u_{(t,e)}\tilde{N}(de,dt)\bigg|^k\leq C_k\mathbb{E}\bigg|\int_0^T\int_{\mathcal{E}}e^{-2\beta t}|y^u_t|^2|\gamma^u_{(t,e)}|^2N(de,dt)\bigg|^\frac{k}{2}\\
&\leq C_k\mathbb{E}\bigg[\sup_{t\in\mathbb{R}_+}e^{-\frac{\beta kt}{2}}|y^u_t|^k\bigg|\int_0^T\int_{\mathcal{E}}e^{-\beta t}|y^u_t|^2|\gamma^u_{(t,e)}|^2N(de,dt)\bigg|^\frac{k}{2}\bigg]\\
&\leq \frac{C_k}{2}\mathbb{E}\bigg[\sup_{t\in\mathbb{R}_+}e^{-\beta kt}|y^u_t|^{2k}\bigg]+\frac{C_k}{2}\mathbb{E}\bigg|\int_0^\infty\int_{\mathcal{E}}e^{-\beta t}|\gamma^u_{(t,e)}|^2N(de,dt)\bigg|^k,
\end{aligned}
\end{equation*}
thus, \eqref{ineq03} can be rewritten as
\begin{equation}\label{ineq04}
\begin{aligned}
&\mathbb{E}\bigg(\int_0^\infty e^{-\beta t}|z^u_t|^2dt\bigg)^k+\mathbb{E}\bigg(\int_0^\infty e^{-\beta t}|\tilde{z}^u_t|^2dt\bigg)^k
 +\mathbb{E}\bigg(\int_0^\infty\int_{\mathcal{E}}e^{-\beta t}|\gamma^u_{(t,e)}|^2N(de,dt)\bigg)^k\\
&\leq C_k\bigg\{\mathbb{E}\bigg[\sup_{t\in\mathbb{R}_+}e^{-\beta kt}|y^u_t|^{2k}\bigg]+\mathbb{E}\int_0^\infty e^{-\frac{\beta kt}{2}}|u_t|^{2k}dt
 +\mathbb{E}\bigg(\int_0^\infty e^{-\frac{\beta}{2} t}g(x^u_t,0,0,0,0,0)dt\bigg)^{2k}\bigg\},
\end{aligned}
\end{equation}
where we use the inequality as follows:
\begin{equation}\label{estimate for N}
\mathbb{E}\bigg(\int_0^\infty\int_{\mathcal{E}}e^{-\beta t}|\gamma^u_{(t,e)}|^2\nu(de)dt\bigg)^k\leq \mathbb{E}\bigg(\int_0^\infty\int_{\mathcal{E}}e^{-\beta t}|\gamma^u_{(t,e)}|^2N(de,dt)\bigg)^k.
\end{equation}
Indeed, set $G(t):=\int_{\mathcal{E}}e^{-\beta t}|\gamma^u_{(t,e)}|^2\nu(de)$, and it follows that
\begin{equation*}
\begin{aligned}
\mathbb{E}\bigg(\int_0^\infty G(t)dt\bigg)^k&=\mathbb{E}\int_0^\infty k\bigg(\int_0^sG(r)dr\bigg)^{k-1}G(s)ds\\
&=\mathbb{E}\int_0^\infty k\bigg(\int_0^sG(r)dr\bigg)^{k-1}\int_{\mathcal{E}}e^{-\beta s}|\gamma^u_{(s,e)}|^2\nu(de)ds\\
&=k\mathbb{E}\int_0^\infty \int_{\mathcal{E}}e^{-\beta s}|\gamma^u_{(s,e)}|^2\bigg(\int_0^sG(r)dr\bigg)^{k-1}N(de,ds)\\
&\leq k\mathbb{E}\bigg[\int_0^\infty \int_{\mathcal{E}}e^{-\beta s}|\gamma^u_{(s,e)}|^2N(de,ds)\bigg(\int_0^\infty G(r)dr\bigg)^{k-1}\bigg]\\
&\leq k\bigg[\mathbb{E}\bigg(\int_0^\infty \int_{\mathcal{E}}e^{-\beta s}|\gamma^u_{(s,e)}|^2N(de,ds)\bigg)^k\bigg]^{\frac{1}{k}}\bigg[\mathbb{E}\bigg(\int_0^\infty G(r)dr\bigg)^k\bigg)\bigg]^{\frac{k-1}{k}},\\
\end{aligned}
\end{equation*}
Thus, we have the inequality \eqref{estimate for N}.

Then applying It\^{o}'s formula to $e^{-\beta kt}|y^u_t|^{2k}$, with {\bf (A6)-(A7)} and Remark \ref{remark24}, we obtain
\begin{equation}\label{ineq6}
\begin{aligned}
&e^{-\beta kt}|y_t^u|^{2k}+\int_t^Te^{-\beta ks}k(2k-1)|y_t^u|^{2k-2}(|z_t^u|^2+|\tilde{z}_t^u|^2)ds\\
&\quad +\int_t^Te^{-\beta ks}\int_{\mathcal{E}}\big(|y^u_{s-}+\gamma^u_{(s,e)}|^{2k}-|y^u_{s-}|^{2k}-2k|y^u_{s-}|^{2k-1}\gamma^u_{(s,e)}\big)N(de,ds)\\
&\leq e^{-\beta kT}|y_T^u|^{2k}+\int_t^T(\beta k+2k\mu_2)e^{-\beta ks}|y_s^u|^{2k}ds\\
&\quad +\int_t^Te^{-\beta ks}\Big[2k|y_s^u|^{2k-1}\Big(e^{-\frac{K_1}{2}s}|z_s^u|+e^{-\frac{K_2}{2}s}|\tilde{z}_s^u|+K_3(s)||\gamma_{(s,e)}^u||+e^{-\frac{K_4}{2}s}|u_s|\Big)\\
&\quad +2k|y^u_s|^{2k-1}g(x^u_s,0,0,0,0,0)\Big]ds-\int_t^T2ke^{-\beta ks}|y^u_s|^{2k-1}zdW_s\\
&\quad -\int_t^T2ke^{-\beta ks}|y^u_s|^{2k-1}\tilde{z}d\xi_s-\int_t^T2ke^{-\beta ks}\int_{\mathcal{E}}|y^u_s|^{2k-1}\gamma^u_{(s,e)}\tilde{N}(de,ds).
\end{aligned}
\end{equation}
Note that again
\begin{equation*}
\begin{aligned}
&2k|y|^{2k-1}e^{-\frac{K_1}{2}t}|z|\leq 2ke^{-K_1t}|y|^{2k}+\frac{k(2k-1)}{2}|y|^{2k-2}|z|^2,\\
&2k|y|^{2k-1}e^{-\frac{K_2}{2}t}|\tilde{z}|\leq 2ke^{-K_2t}|y|^{2k}+\frac{k(2k-1)}{2}|y|^{2k-2}|\tilde{z}|^2,\\
&2k|y|^{2k-1}K_3(t)||\gamma||\leq 2k|y|^{2k}+\frac{k}{2}K^2_3(t)|y|^{2k-2}||\gamma||^2,\\
&2k|y|^{2k-1}e^{-\frac{K_4}{2}t}|u|\leq 2ke^{-K_4t}|y|^{2k}+\frac{k}{2}|y|^{2k-2}|u|^2,\\
\end{aligned}
\end{equation*}
it follows that \eqref{ineq6} becomes
\begin{equation}\label{ineq7}
\begin{aligned}
&e^{-\beta kt}|y_t^u|^{2k}+\int_t^Te^{-\beta ks}\frac{k(2k-1)}{2}|y^u_s|^{2k-2}(|z^u_s|^2+|\tilde{z}^u_s|^2)ds\\
&\quad+\int_t^Te^{-\beta ks}\int_{\mathcal{E}}\big(|y^u_{s-}+\gamma^u_{(s,e)}|^{2k}-|y^u_{s-}|^{2k}-2k|y^u_{s-}|^{2k-1}\gamma^u_{(s,e)}\big)N(de,ds)\\
&\leq e^{-\beta kT}|y_T^u|^{2k}+\int_t^T\big(\beta k+2k\mu_2+2ke^{-K_1s}+2ke^{-K_2s}+2k+2ke^{-K_4s}\big)e^{-\beta ks}|y^u_s|^{2k}ds\\
&\quad+\int_t^Te^{-\beta ks}\bigg(\frac{k}{2}K^2_3(s)|y^u_s|^{2k-2}||\gamma^u_{(s,e)}||^2+\frac{k}{2}|y^u_s|^{2k-2}|u_s|^2\\
&\quad+2k|y^u_s|^{2k-1}g(x^u_s,0,0,0,0,0)\bigg)ds-\int_t^T2ke^{-\beta ks}|y^u_s|^{2k-1}z^u_sdW_s\\
&\quad-\int_t^T2ke^{-\beta ks}|y^u_s|^{2k-1}\tilde{z}^u_sd\xi_s-\int_t^T2ke^{-\beta ks}\int_{\mathcal{E}}|y^u_s|^{2k-1}\gamma^u_{(s,e)}\tilde{N}(de,ds).
\end{aligned}
\end{equation}

By Lemma \ref{Confortola}, taking $a=y_{t-}+\gamma$, $b=-y_{t-}$, it follows that $|\gamma|^{2k}\leq (1+\epsilon)|y_{t-}+\gamma|^{2k}+c_\epsilon|y_{t-}|^{2k}$, and
\begin{equation*}
|y_{t-}+\gamma|^{2k}-|y_{t-}|^{2k}\geq\frac{1}{1+\epsilon}|\gamma|^{2k}-\bigg(1+\frac{c_\epsilon}{1+\epsilon}\bigg)|y_{t-}|^{2k}.
\end{equation*}
Then \eqref{ineq7} yields
\begin{equation}\label{ineq8}
\begin{aligned}
&e^{-\beta kt}|y_t^u|^{2k}+\int_t^Te^{-\beta ks}\frac{k(2k-1)}{2}|y^u_s|^{2k-2}(|z^u_s|^2+|\tilde{z}^u_s|^2)ds\\
&\quad +\int_t^Te^{-\beta ks}\int_{\mathcal{E}}\frac{1}{1+\epsilon}|\gamma^u_{(s,e)}|^{2k}N(de,ds)\\
&\leq e^{-\beta kT}|y_T^u|^{2k}+\int_t^T\big(\beta k+2k\mu_2+2ke^{-K_1s}+2ke^{-K_2s}+2k+2ke^{-K_4s}\big)e^{-\beta ks}|y^u_s|^{2k}ds\\
&\quad +\int_t^Te^{-\beta ks}\int_{\mathcal{E}}\bigg(1+\frac{c_\epsilon}{1+\epsilon}\bigg)|y^u_{s-}|^{2k}N(de,ds)+\int_t^Te^{-\beta ks}\int_{\mathcal{E}}2k|y^u_{s-}|^{2k-1}\gamma^u_{(s,e)}N(de,ds)\\
&\quad +\int_t^Te^{-\beta ks}\bigg(\frac{k}{2}K^2_3(s)|y^u_s|^{2k-2}||\gamma^u_{(s,e)}||^2+\frac{k}{2}|y^u_s|^{2k-2}|u_s|^2\\
&\quad +2k|y^u_s|^{2k-1}g(x^u_s,0,0,0,0,0)\bigg)ds-\int_t^T2ke^{-\beta ks}|y^u_s|^{2k-1}z^u_sdW_s\\
&\quad -\int_t^T2ke^{-\beta ks}|y^u_s|^{2k-1}\tilde{z}^u_sd\xi_s-\int_t^T2ke^{-\beta ks}\int_{\mathcal{E}}|y^u_s|^{2k-1}\gamma^u_{(s,e)}\tilde{N}(de,ds).
\end{aligned}
\end{equation}
Then, taking $t=0$ and choosing $\delta$ sufficiently small (the value of $\delta$ will be taken in the following), we have
\begin{equation}\label{ineq9}
\begin{aligned}
&\mathbb{E}\int_0^Te^{-\beta ks}\frac{k(2k-1)}{2}|y^u_s|^{2k-2}\big(|z^u_s|^2+|\tilde{z}^u_s|^2\big)ds+\mathbb{E}\int_0^Te^{-\beta ks}\int_{\mathcal{E}}\frac{1}{1+\epsilon}|\gamma^u_{(s,e)}|^{2k}N(de,ds)\\
&\leq \mathbb{E}\big[e^{-\beta kT}|y_T^u|^{2k}\big]+\mathbb{E}\int_0^Te^{-\beta ks}\int_{\mathcal{E}}2k|y^u_{s-}|^{2k-1}\gamma^u_{(s,e)}\nu(de)ds\\
&\quad +\mathbb{E}\int_0^T\bigg(\beta k+2k\mu_2+2ke^{-K_1s}+2ke^{-K_2s}+2k+2ke^{-K_4s}+\frac{C(1+\epsilon+c_\epsilon)}{1+\epsilon}\bigg)e^{-\beta ks}|y^u_s|^{2k}ds\\
&\quad +\mathbb{E}\int_0^Te^{-\beta ks}\bigg(\frac{k}{2}K^2_3(s)|y^u_s|^{2k-2}||\gamma^u_{(s,e)}||^2+\frac{k}{2}|y^u_s|^{2k-2}|u_s|^2+2k|y^u_s|^{2k-1}g(x^u_s,0,0,0,0,0)\bigg)ds,\\
&\leq \mathbb{E}\big[e^{-\beta kT}|y_T^u|^{2k}\big]+\mathbb{E}\int_0^Te^{-\beta ks}\int_{\mathcal{E}}\bigg(2\delta k+\frac{k}{2}K^2_3(s)\bigg)|y^u_s|^{2k-2}|\gamma^u_{(s,e)}|^2\nu(de)ds\\
&\quad +\mathbb{E}\int_0^T\bigg(\beta k+2k\mu_2+2ke^{-K_1s}+2ke^{-K_2s}+2k+2ke^{-K_4s}+\frac{C(1+\epsilon+c_\epsilon)}{1+\epsilon}\\
&\qquad\quad +\frac{Ck}{2\delta}\bigg)e^{-\beta ks}|y^u_s|^{2k}ds+\mathbb{E}\int_0^Te^{-\beta ks}\bigg(\frac{k}{2}|y^u_s|^{2k-2}|u_s|^2+2k|y^u_s|^{2k-1}g(x^u_s,0,0,0,0,0)\bigg)ds,
\end{aligned}
\end{equation}
and
\begin{equation}\label{ineq10}
\begin{aligned}
&\mathbb{E}\bigg[\sup_{t\in[0,T]}e^{-\beta kt}|y_t^u|^{2k}\bigg]\leq \mathbb{E}\big[e^{-\beta kT}|y_T^u|^{2k}\big]\\
&\quad +\mathbb{E}\int_0^Te^{-\beta ks}\int_{\mathcal{E}}\bigg(2\delta k+\frac{k}{2}K^2_3(s)\bigg)|y^u_s|^{2k-2}|\gamma^u_{(s,e)}|^2\nu(de)ds\\
&\quad +\mathbb{E}\int_0^T\bigg(\beta k+2k\mu_2+2ke^{-K_1s}+2ke^{-K_2s}+2k+2ke^{-K_4s}+\frac{C(1+\epsilon+c_\epsilon)}{1+\epsilon}\\
&\qquad +\frac{Ck}{2\delta}\bigg)e^{-\beta ks}|y^u_s|^{2k}ds+\mathbb{E}\int_0^Te^{-\beta ks}\bigg(\frac{k}{2}|y^u_s|^{2k-2}|u_s|^2+2k|y^u_s|^{2k-1}g(x^u_s,0,0,0,0,0)\bigg)ds\\
&\quad +C_k\mathbb{E}\bigg(\int_0^Te^{-2\beta ks}|y^u_s|^{4k-2}|z^u_s|^2ds\bigg)^{\frac{1}{2}}+C_k\mathbb{E}\bigg(\int_0^Te^{-2\beta ks}|y^u_s|^{4k-2}|\tilde{z}^u_s|^2ds\bigg)^{\frac{1}{2}}\\
&\quad +C_k\mathbb{E}\bigg(\int_0^Te^{-2\beta ks}\int_{\mathcal{E}}|y^u_s|^{4k-2}|\gamma^u_{(s,e)}|^2 N(de,ds)\bigg)^{\frac{1}{2}}.
\end{aligned}
\end{equation}
Note that
\begin{equation*}
\begin{aligned}
&C_k\mathbb{E}\bigg(\int_0^Te^{-2\beta ks}|y^u_s|^{4k-2}|z^u_s|^2ds\bigg)^{\frac{1}{2}}
 \leq \frac{1}{6}\mathbb{E}\bigg[\sup_{s\in[0,T]}e^{-\beta ks}|y^u_s|^{2k}\bigg]+\frac{3C_k^2}{2}\mathbb{E}\int_0^Te^{-\beta ks}|y|^{2k-2}|z^u_s|^2ds,\\
&C_k\mathbb{E}\bigg(\int_0^Te^{-2\beta ks}|y^u_s|^{4k-2}|\tilde{z}^u_s|^2ds\bigg)^{\frac{1}{2}}\leq \frac{1}{6}\mathbb{E}\bigg[\sup_{s\in[0,T]}e^{-\beta ks}|y^u_s|^{2k}\bigg]
 +\frac{3C_k^2}{2}\mathbb{E}\int_0^Te^{-\beta ks}|y^u_s|^{2k-2}|\tilde{z}^u_s|^2ds,\\
 &C_k\mathbb{E}\bigg(\int_0^Te^{-2\beta ks}\int_{\mathcal{E}}|y^u_s|^{4k-2}|\gamma^u_{(s,e)}|^2 N(de,ds)\bigg)^{\frac{1}{2}}\leq \frac{1}{6}\mathbb{E}\bigg[\sup_{s\in[0,T]}e^{-\beta ks}|y^u_s|^{2k}\bigg]\\
\end{aligned}
\end{equation*}
\begin{equation*}
\begin{aligned}
&+\frac{3C_k^2\lambda}{2}\mathbb{E}\int_0^Te^{-\beta ks}|y^u_s|^{2k}ds
 +\frac{3C_k^2}{2\lambda}\mathbb{E}\int_0^T\int_{\mathcal{E}}e^{-\beta ks}|\gamma^u_{(s,e)}|^{2k}N(de,ds),
\end{aligned}
\end{equation*}
where we take $\lambda(k)=\frac{1}{6k}<1$ sufficiently small. Then \eqref{ineq10} becomes
\begin{equation}\label{ineq11}
\begin{aligned}
&\mathbb{E}\bigg[\sup_{t\in[0,T]}e^{-\beta kt}|y_t^u|^{2k}\bigg]\\
&\leq 2\mathbb{E}\big[e^{-\beta kT}|y_T^u|^{2k}\big]+\mathbb{E}\int_0^Te^{-\beta kt}\int_{\mathcal{E}}\big[4\delta k+kK^2_3(t)\big]|y^u_t|^{2k-2}|\gamma^u_{(t,e)}|^2\nu(de)dt\\
&\quad +\mathbb{E}\int_0^T2\bigg(\beta k+2k\mu_2+2ke^{-K_1t}+2ke^{-K_2t}+2k+2ke^{-K_4t}+\frac{C(1+\epsilon+c_\epsilon)}{1+\epsilon}\\
&\qquad +\frac{Ck}{2\delta}+\frac{3C_k^2\lambda}{2}\bigg)e^{-\beta kt}|y^u_t|^{2k}dt+2\mathbb{E}\int_0^Te^{-\beta kt}\bigg(\frac{k}{2}|y^u_t|^{2k-2}|u_t|^2\\
&\qquad +2k|y^u_t|^{2k-1}g(x^u_t,0,0,0,0,0)\bigg)dt\\
&\quad +\frac{3C_k^2}{\lambda}\bigg[\mathbb{E}\int_0^Te^{-\beta kt}|y^u_t|^{2k-2}\big(|z^u_t|^2+|\tilde{z}^u_t|^2\big)dt+\mathbb{E}\int_0^T\int_{\mathcal{E}}e^{-\beta kt}|\gamma^u_{(t,e)}|^{2k}N(de,dt)\bigg].
\end{aligned}
\end{equation}
It follows from \eqref{ineq9} that
\begin{equation}\label{ineq12}
\begin{aligned}
&\mathbb{E}\bigg[\sup_{t\in[0,T]}e^{-\beta kt}|y_t^u|^{2k}\bigg]\leq \bigg(2+\frac{3C_k^2}{\lambda}M(k,\epsilon)^{-1}\bigg)\mathbb{E}\big[e^{-\beta kT}|y_T^u|^{2k}\big]\\
&\quad +\bigg(1+\frac{3C_k^2}{2\lambda}M(k,\epsilon)^{-1}\bigg)\mathbb{E}\int_0^Te^{-\beta kt}\Big(4\delta k+kK^2_3(t)\Big)\int_{\mathcal{E}}|y^u_t|^{2k-2}|\gamma^u_{(t,e)}|^2\nu(de)dt\\
&\quad +\bigg(1+\frac{3C_k^2}{2\lambda}M(k,\epsilon)^{-1}\bigg)\mathbb{E}\int_0^T2\bigg(\beta k+2k\mu_2+2ke^{-K_1t}+2ke^{-K_2t}+2k+2ke^{-K_4t}\\
&\qquad +\frac{C(1+\epsilon+c_\epsilon)}{1+\epsilon}+\frac{Ck}{2\delta}+\frac{3C_k^2\lambda}{2}\bigg)e^{-\beta kt}|y^u_t|^{2k}dt\\
&\quad +\bigg(2+\frac{3C_k^2}{\lambda}M(k,\epsilon)^{-1}\bigg)\mathbb{E}\int_0^Te^{-\beta kt}\bigg[\frac{k}{2}|y^u_t|^{2k-2}|u_t|^2+2k|y^u_t|^{2k-1}g(x^u_t,0,0,0,0,0)\bigg]dt,
\end{aligned}
\end{equation}
where we set $M(k,\epsilon)=\min\big\{\frac{k(2k-1)}{2},\frac{1}{1+\epsilon}\big\}$. Then taking $\beta k+2k\mu_2+2ke^{-K_1t}+2ke^{-K_2t}+2k+2ke^{-K_4t}+\frac{C(1+\epsilon+c_\epsilon)}{1+\epsilon}+\frac{Ck}{2\delta}+\frac{3C_k^2\lambda}{2}\leq0$ and $\delta(k)=\frac{2\lambda^2-kK^2_3(t)}{4k}$ (where $K_3(t)$ is taken sufficiently small satisfied {\bf (A7)}), we have
\begin{equation}\label{ineq13}
\begin{aligned}
&\mathbb{E}\bigg[\sup_{t\in[0,T]}e^{-\beta kt}|y_t^u|^{2k}\bigg]\leq \bigg(2+\frac{3C_k^2}{\lambda}M(k,\epsilon)^{-1}\bigg)\mathbb{E}\big[e^{-\beta kT}|y_T^u|^{2k}\big]\\
&\quad +\bigg(2\lambda^2+3C_k^2\lambda M(k,\epsilon)^{-1}\bigg)\mathbb{E}\int_0^Te^{-\beta kt}\int_{\mathcal{E}}|y^u_t|^{2k-2}|\gamma^u_{(t,e)}|^2\nu(de)dt\\
&\quad +2k\bigg(2+\frac{3C_k^2}{\lambda}M(k,\epsilon)^{-1}\bigg)\mathbb{E}\int_0^Te^{-\beta kt}\Big(|y^u_t|^{2k-2}|u_t|^2+|y^u_t|^{2k-1}g(x^u_t,0,0,0,0,0)\Big)dt.
\end{aligned}
\end{equation}
Noting that
\begin{equation*}
\begin{aligned}
&\bigg(2\lambda^2+3C_k^2\lambda M(k,\epsilon)^{-1}\bigg)\mathbb{E}\int_0^Te^{-\beta kt}\int_{\mathcal{E}}|y^u_t|^{2k-2}|\gamma^u_{(t,e)}|^2\nu(de)dt\\
&\leq\bigg(2\lambda^2+3C_k^2\lambda M(k,\epsilon)^{-1}\bigg)^{\frac{k}{2(k-1)}}\frac{k-1}{k}\mathbb{E}\bigg[\sup_{t\in[0,T]}e^{-\beta kt}|y^u_t|^{2k}\bigg]\\
&\quad+\bigg(2\lambda^2+3C_k^2\lambda M(k,\epsilon)^{-1}\bigg)^{\frac{k}{2}}\frac{1}{k}\mathbb{E}\bigg(\int_0^T\int_{\mathcal{E}}e^{-\beta t}|\gamma^u_{(t,e)}|^2N(de,dt)\bigg)^k,
\end{aligned}
\end{equation*}
and taking $\alpha(k)=\lambda^2(k)$ such that
\begin{equation*}
\begin{aligned}
&2k\bigg(2+\frac{3C_k^2}{\lambda}M(k,\epsilon)^{-1}\bigg)\mathbb{E}\int_0^Te^{-\beta kt}|y^u_t|^{2k-2}|u_t|^2dt\\
&\leq2k\bigg(2+\frac{3C_k^2}{\lambda}M(k,\epsilon)^{-1}\bigg)\bigg[\alpha \mathbb{E}\bigg[\sup_{t\in[0,T]}e^{-\beta kt}|y^u_t|^{2k}\bigg]+C_\alpha \mathbb{E}\bigg(\int_0^Te^{-\beta t}|u_t|^2dt\bigg)^k\bigg]\\
&\leq\bigg(4\lambda^2k+6C_k\lambda M(k,\epsilon)^{-1}\bigg)\mathbb{E}\bigg[\sup_{t\in[0,T]}e^{-\beta kt}|y^u_t|^{2k}\bigg]\\
&\quad +\bigg(4k+\frac{6C_k}{\lambda}M(k,\epsilon)^{-1}\bigg)C_\alpha \mathbb{E}\bigg(\int_0^Te^{-\beta t}|u_t|^2dt\bigg)^k,
\end{aligned}
\end{equation*}
and
\begin{equation*}
\begin{aligned}
&2k\bigg(2+\frac{3C_k^2}{\lambda}M(k,\epsilon)^{-1}\bigg)\mathbb{E}\int_0^Te^{-\beta kt}|y^u_t|^{2k-1}g(x^u_t,0,0,0,0,0)dt\\
&\leq \bigg(4\lambda^2k+6C_k\lambda M(k,\epsilon)^{-1}\bigg)\mathbb{E}\bigg[\sup_{t\in[0,T]}e^{-\beta kt}|y^u_t|^{2k}\bigg]\\
&\quad+\bigg(4k+\frac{6C_k}{\lambda}M(k,\epsilon)^{-1}\bigg)C_\alpha \mathbb{E}\bigg(\int_0^Te^{-\frac{\beta}{2}t}g(x^u_t,0,0,0,0,0)dt\bigg)^{2k}.
\end{aligned}
\end{equation*}
Therefore, taking limit with $T\rightarrow\infty$, \eqref{ineq13} becomes
\begin{equation}\label{ineq15}
\begin{aligned}
&\mathbb{E}\bigg[\sup_{t\in\mathbb{R}_+}e^{-\beta kt}|y_t^u|^{2k}\bigg]
 \leq\bigg(2\lambda^2+3C_k^2\lambda M(k,\epsilon)^{-1}\bigg)^{\frac{k}{2(k-1)}}\frac{k-1}{k}\mathbb{E}\bigg[\sup_{t\in\mathbb{R}_+}e^{-\beta kt}|y^u_t|^{2k}\bigg]\\
&\quad+\bigg(2\lambda^2+3C_k^2\lambda M(k,\epsilon)^{-1}\bigg)^{\frac{k}{2}}\frac{1}{k}\mathbb{E}\bigg(\int_0^\infty\int_{\mathcal{E}}e^{-\beta t}|\gamma^u_{(t,e)}|^2N(de,dt)\bigg)^k\\
&\quad+\bigg(8\lambda^2k+12C_k\lambda M(k,\epsilon)^{-1}\bigg)\mathbb{E}\bigg[\sup_{t\in\mathbb{R}_+}e^{-\beta kt}|y^u_t|^{2k}\bigg]+\bigg(4k+\frac{6C_k}{\lambda}M(k,\epsilon)^{-1}\bigg)\\
&\quad \times C_\alpha \bigg[\mathbb{E}\bigg(\int_0^\infty e^{-\beta t}|u_t|^2dt\bigg)^k+\mathbb{E}\bigg(\int_0^\infty e^{-\frac{\beta}{2}t}g(x^u_t,0,0,0,0,0)dt\bigg)^{2k}\bigg],
\end{aligned}
\end{equation}
where we have used the fact that $\mathbb{E}\big[e^{-\beta kT}|y_T^u|^{2k}\big]=\mathbb{E}\big[e^{-\frac{\beta}{2}T}|y^u_T|e^{-\beta (k-\frac{1}{2})T}|y^u_T|^{2k-1}\big]\\
\leq \big(\mathbb{E}\big[e^{-\beta T}|y^u_T|^2\big]\big)^{\frac{1}{2}}\big(\mathbb{E}\big[e^{-\beta (2k-1)T}|y^u_T|^{4k-2}\big]\big)^{\frac{1}{2}} \xrightarrow{T\rightarrow\infty}0$.

Combining \eqref{ineq15} with \eqref{ineq04}, we take $\lambda_0$ small enough such that
\begin{equation*}
\begin{aligned}
&1-\bigg\{\bigg(2\lambda_0^2+3C_k^2\lambda_0 M(k,\epsilon)^{-1}\bigg)^{\frac{k}{2(k-1)}}\frac{k-1}{k}+\bigg(2\lambda_0^2+3C_k^2\lambda_0 M(k,\epsilon)^{-1}\bigg)^{\frac{k}{2}}C_k\\
&\qquad +\bigg(8\lambda_0^2k+12C_k\lambda_0 M(k,\epsilon)^{-1}\bigg)\bigg\}>0,
\end{aligned}
\end{equation*}
thus, we get the desired result as follows
\begin{equation*}
\begin{aligned}
&\mathbb{E}\bigg[\sup_{t\in\mathbb{R}_+}e^{-\beta_3 kt}|y_t^u|^{2k}+\bigg(\int_0^\infty e^{-\beta_4 t}|z^u_t|^2dt\bigg)^k+\bigg(\int_0^\infty e^{-\beta_4 t}|\tilde{z}^u_t|^2dt\bigg)^k\\
&\qquad +\bigg(\int_0^\infty\int_{\mathcal{E}}e^{-\beta_4 t}|\gamma^u_{(t,e)}|^2N(de,dt)\bigg)^k\bigg]\\
&\leq C_{k,\epsilon}\bigg[\mathbb{E}\int_0^\infty e^{-\beta_1 kt}|u_t|^{2k}dt+\mathbb{E}\bigg(\int_0^\infty e^{-\beta_0t}g(x^u_t,0,0,0,0,0)dt\bigg)^{2k}\bigg].
\end{aligned}
\end{equation*}
where $C_{k,\epsilon}=C_{k,\lambda_0,\epsilon,\alpha}$ depends on some $k,\epsilon$ and sufficiently small $\lambda_0(k),\alpha(k)$. Moreover, $\beta_1\leq\min\{\beta_3,\beta_4\}$ and $\beta_0\leq\min\{\beta_3,\beta_4\}$. The proof is complete.

\vspace{2mm}

\emph{Proof of Lemma \ref{variational ineq}}. We can deduce the variational inequality based on the fact that
\begin{equation}\label{variation inequality 00}
\begin{aligned}
&\epsilon^{-1}[J(u^\epsilon)-J(\bar{u})]=\epsilon^{-1}\bigg\{\mathbb{E}\big[\phi(y_0^\epsilon)-\phi(\bar{y}_0)\big]\\
&\ +\mathbb{E}\int_0^\infty e^{-\beta t}\big[F(x^\epsilon,y^\epsilon,z^\epsilon,\tilde{z}^\epsilon,\gamma^\epsilon,\mathcal{Z}^\epsilon,u^\epsilon)
-F(\bar{x},\bar{y},\bar{z},\bar{\tilde{z}},\bar{\gamma},\bar{\mathcal{Z}},\bar{u})\big]dt\bigg\}:=\mathbf{D}_1+\mathbf{D}_2\geq0,
\end{aligned}
\end{equation}
where we set
\begin{equation}\label{F}
F(x,y,z,\tilde{z},\gamma,\mathcal{Z},u):=\mathcal{Z}f(x,y,z,\tilde{z},\gamma,u).
\end{equation}
We first deal with the first term. Note that
\begin{equation*}
\begin{aligned}
\mathbf{D}_1&=\mathbb{E}\bigg[\bigg(\int_0^1\phi_yd\theta-\bar{\phi}_y\bigg)\tilde{y}^\epsilon_0\bigg]+\mathbb{E}\bigg[\bigg(\int_0^1\phi_yd\theta-\bar{\phi}_y\bigg)y_{(1,0)}\bigg]
 +\mathbb{E}\big[\bar{\phi}_y\tilde{y}^\epsilon_0\big]+\mathbb{E}\big[\bar{\phi}_yy_{(1,0)}\big],
\end{aligned}
\end{equation*}
where
\begin{equation*}
\begin{aligned}
&\mathbb{E}\bigg[\bigg(\int_0^1\phi_yd\theta-\bar{\phi}_y\bigg)\tilde{y}^\epsilon_0\bigg]=\mathbb{E}\bigg[\bigg(\int_0^1\phi_yd\theta-\bar{\phi}_y\bigg)e^{-\frac{\beta}{2}0}\tilde{y}^\epsilon_0\bigg]\\
&\leq \mathbb{E}\bigg[\sup_{t\in\mathcal{R}_+}e^{-\frac{\beta}{2}t}\tilde{y}^\epsilon_t\bigg(\int_0^1\phi_yd\theta-\bar{\phi}_y\bigg)\bigg]\\
&\leq \bigg(\mathbb{E}\bigg[\sup_{t\in\mathcal{R}_+}e^{-\beta t}|\tilde{y}^\epsilon_t|^2\bigg]\bigg)^{\frac{1}{2}}
\bigg(\mathbb{E}\bigg(\int_0^1\phi_yd\theta-\bar{\phi}_y\bigg)^2\bigg)^{\frac{1}{2}}\rightarrow0,\quad \text{as}\ \epsilon\rightarrow0,
\end{aligned}
\end{equation*}
and similarly, $\mathbb{E}[(\int_0^1\phi_yd\theta-\bar{\phi}_y)y_{(1,0)}]\rightarrow0,\ \text{as}\ \epsilon\rightarrow0,$ and
\begin{equation*}
\begin{aligned}
&\mathbb{E}\big[\bar{\phi}_y\tilde{y}^\epsilon_0\big]\leq C\mathbb{E}\big[(1+|\bar{y}_0|)\tilde{y}^\epsilon_0\big]=C\mathbb{E}\big[e^{-\frac{\beta}{2}0}\tilde{y}^\epsilon_0+e^{-\beta0}|\bar{y}_0|\tilde{y}^\epsilon_0\big]\\
&\leq C\mathbb{E}\bigg[\sup_{t\in\mathbb{R}_+}e^{-\frac{\beta}{2}t}\tilde{y}^\epsilon_t\bigg]
+C\mathbb{E}\bigg[\sup_{t\in\mathbb{R}_+}e^{-\frac{\beta}{2} t}|\bar{y}_t|\sup_{t\in\mathbb{R}_+}e^{-\frac{\beta}{2} t}\tilde{y}^\epsilon_t\bigg]\\
&\leq C\bigg(\mathbb{E}\bigg[\sup_{t\in\mathbb{R}_+}e^{-\beta t}|\tilde{y}^\epsilon_t|^2\bigg]\bigg)^{\frac{1}{2}}
+C\bigg(\mathbb{E}\bigg[\sup_{t\in\mathbb{R}_+}e^{-\beta t}|\bar{y}_t|^2\bigg]\bigg)^{\frac{1}{2}}
\bigg(\mathbb{E}\bigg[\sup_{t\in\mathbb{R}_+}e^{-\beta t}|\tilde{y}^\epsilon_t|^2\bigg]\bigg)^{\frac{1}{2}}\rightarrow0,\ \ \text{as}\ \epsilon\rightarrow0.
\end{aligned}
\end{equation*}
Therefore, we have
\begin{equation}\label{10}
\lim_{\epsilon\rightarrow0}\mathbf{D}_1=\mathbb{E}\big[\bar{\phi}_yy_{(1,0)}\big].
\end{equation}

Next, we deal with the second term. Note that (we omit some $t$ in many integrands)
\begin{equation*}
\begin{aligned}
\mathbf{D}_2&=\mathbb{E}\int_0^\infty e^{-\beta t}\bigg[\int_0^1F_xd\theta(\tilde{x}^\epsilon+x_1)+\int_0^1F_yd\theta(\tilde{y}^\epsilon+y_1)+\int_0^1F_zd\theta(\tilde{z}^\epsilon+z_1)\\
&\qquad +\int_0^1F_{\tilde{z}}d\theta(\tilde{\tilde{z}}^\epsilon+\tilde{z}_1)+\int_0^1F_\gamma d\theta\int_{\mathcal{E}}(\tilde{\gamma}^\epsilon
 +\gamma_1)\nu(de)+\int_0^1F_{\mathcal{Z}}d\theta(\tilde{\mathcal{Z}}^\epsilon+\mathcal{Z}_1)+\int_0^1F_ud\theta v\bigg]dt\\
&:=\mathbf{D}_{21}+\mathbf{D}_{22}+\mathbf{D}_{23}+\mathbf{D}_{24}+\mathbf{D}_{25}+\mathbf{D}_{26}+\mathbf{D}_{27},
\end{aligned}
\end{equation*}
and in the following we will deal with the above terms one by one. First, we have
\begin{equation*}
\begin{aligned}
\mathbf{D}_{21}&=\mathbb{E}\int_0^\infty e^{-\beta t}\bigg(\int_0^1F_xd\theta-\bar{F}_x\bigg)\tilde{x}^\epsilon dt+\mathbb{E}\int_0^\infty e^{-\beta t}\bar{F}_x\tilde{x}^\epsilon dt\\
&\quad +\mathbb{E}\int_0^\infty e^{-\beta t}\bigg(\int_0^1F_xd\theta-\bar{F}_x\bigg)x_1dt+\mathbb{E}\int_0^\infty e^{-\beta t}\bar{F}_xx_1dt,
\end{aligned}
\end{equation*}
where
\begin{equation*}
\begin{aligned}
&\mathbb{E}\int_0^\infty e^{-\beta t}\bigg(\int_0^1F_xd\theta-\bar{F}_x\bigg)\tilde{x}^\epsilon dt\\
&\leq \bigg(\mathbb{E}\int_0^\infty e^{-\beta t}|\tilde{x}^\epsilon|^2 dt\bigg)^{\frac{1}{2}}\bigg(\mathbb{E}\int_0^\infty e^{-\beta t}\bigg(\int_0^1F_xd\theta-\bar{F}_x\bigg)^2dt\bigg)^{\frac{1}{2}}\\
&\leq \bigg(\sup_{t\in\mathbb{R}_+}\mathbb{E}\big[e^{-\frac{\beta}{2} t}|\tilde{x}^\epsilon|^2\big]\int_0^\infty e^{-\frac{\beta}{2} t} dt\bigg)^{\frac{1}{2}}
 \bigg(\mathbb{E}\int_0^\infty e^{-\beta t}\bigg(\int_0^1F_xd\theta-\bar{F}_x\bigg)^2dt\bigg)^{\frac{1}{2}}\rightarrow0,\quad \text{as}\ \epsilon\rightarrow0.
\end{aligned}
\end{equation*}
Similarly, $\mathbb{E}\int_0^\infty e^{-\beta t}\big(\int_0^1F_xd\theta-\bar{F}_x\big)x_1 dt\rightarrow0,\ \text{as}\ \epsilon\rightarrow0$,
and by \eqref{F}, {\bf (H1)},
\begin{equation*}
\begin{aligned}
&\mathbb{E}\int_0^\infty e^{-\beta t}\bar{F}_x\tilde{x}^\epsilon dt=\mathbb{E}\int_0^\infty e^{-\beta t}\bar{\mathcal{Z}}\bar{f}_x\tilde{x}^\epsilon dt\\
&\leq C\mathbb{E}\int_0^\infty e^{-\beta t}\bar{\mathcal{Z}}\big(1+|\bar{x}|+|\bar{y}|+|\bar{z}|+|\bar{\tilde{z}}|+||\bar{\gamma}||+|\bar{u}|\big)\tilde{x}^\epsilon dt.
\end{aligned}
\end{equation*}
We can divide the above into several parts to deal with as follows:
\begin{equation*}
\begin{aligned}
&\mathbb{E}\int_0^\infty e^{-\beta t}\bar{\mathcal{Z}}\tilde{x}^\epsilon dt=\int_0^\infty e^{-\beta t}\mathbb{E}\big[\bar{\mathcal{Z}}\tilde{x}^\epsilon\big] dt
 \leq \int_0^\infty e^{-\beta t}\big(\mathbb{E}\bar{\mathcal{Z}}^2\big)^{\frac{1}{2}}\big(\mathbb{E}|\tilde{x}^\epsilon|^2\big)^{\frac{1}{2}}dt\\
&\leq \bigg(\mathbb{E}\bigg[\sup_{t\in\mathbb{R}_+}\bar{\mathcal{Z}}^2\bigg]\bigg)^{\frac{1}{2}}\int_0^\infty e^{-\frac{\beta}{2} t}dt
 \bigg(\sup_{t\in\mathbb{R}_+}\mathbb{E}\big[e^{-\beta t}|\tilde{x}^\epsilon|^2\big]\bigg)^{\frac{1}{2}}\rightarrow0,\quad \text{as}\ \epsilon\rightarrow0,
\end{aligned}
\end{equation*}
\begin{equation*}
\begin{aligned}
&\mathbb{E}\int_0^\infty e^{-\beta t}\bar{\mathcal{Z}}|\bar{x}|\tilde{x}^\epsilon dt
 \leq \int_0^\infty e^{-\beta t}\big(\mathbb{E}\bar{\mathcal{Z}}^4\big)^{\frac{1}{4}}\big(\mathbb{E}\bar{x}^4\big)^{\frac{1}{4}}\big(\mathbb{E}|\tilde{x}^\epsilon|^2\big)^{\frac{1}{2}} dt\\
&\leq \bigg(\mathbb{E}\bigg[\sup_{t\in\mathbb{R}_+}\bar{\mathcal{Z}}^4\bigg]\bigg)^{\frac{1}{4}}
 \int_0^\infty e^{-\frac{\beta}{4} t} dt\bigg(\sup_{t\in\mathbb{R}_+}\mathbb{E}\big[e^{-\beta t}\bar{x}^4\big]\bigg)^{\frac{1}{4}}
 \bigg(\sup_{t\in\mathbb{R}_+}\mathbb{E}\big[e^{-\beta t}|\tilde{x}^\epsilon|^2\big]\bigg)^{\frac{1}{2}}\rightarrow0,\quad \text{as}\ \epsilon\rightarrow0,
\end{aligned}
\end{equation*}
\begin{equation*}
\begin{aligned}
&\mathbb{E}\int_0^\infty e^{-\beta t}\bar{\mathcal{Z}}|\bar{y}|\tilde{x}^\epsilon dt
 \leq \mathbb{E}\bigg[\bigg(\sup_{t\in\mathcal{R}_+}e^{-\frac{\beta}{2} t}\bar{y}\bigg)\bigg(\sup_{t\in\mathcal{R}_+}\bar{\mathcal{Z}}\bigg)\bigg(\int_0^\infty e^{-\frac{\beta}{2} t}\tilde{x}^\epsilon dt\bigg)\bigg]\\
&\leq \bigg(\mathbb{E}\bigg[\sup_{t\in\mathcal{R}_+}e^{-\beta t}\bar{y}^2\bigg]\bigg)^\frac{1}{2}
 \bigg(\mathbb{E}\bigg[\sup_{t\in\mathcal{R}_+}\bar{\mathcal{Z}}^4\bigg]\bigg)^{\frac{1}{4}}
 \bigg(\mathbb{E}\bigg(\int_0^\infty e^{-\frac{\beta}{2} t}\tilde{x}^\epsilon dt\bigg)^4\bigg)^{\frac{1}{4}}\\
&\leq C\bigg(\mathbb{E}\int_0^\infty e^{-\beta t}|\tilde{x}^\epsilon|^4 dt\bigg)^{\frac{1}{4}}
 \leq C\bigg(\sup_{t\in\mathbb{R}_+}\mathbb{E}\big[e^{-\frac{\beta}{2} t}|\tilde{x}^\epsilon|^4\big] \bigg(\int_0^\infty e^{-\frac{\beta}{2} t}dt\bigg)\bigg)^{\frac{1}{4}}\rightarrow0,\quad \text{as}\ \epsilon\rightarrow0,
\end{aligned}
\end{equation*}
\begin{equation*}
\begin{aligned}
&\mathbb{E}\int_0^\infty e^{-\beta t}\bar{\mathcal{Z}}|\bar{z}|\tilde{x}^\epsilon dt
 \leq \bigg(\mathbb{E}\int_0^\infty e^{-\beta t}\bar{z}^2dt\bigg)^{\frac{1}{2}}
 \bigg(\mathbb{E}\bigg[\sup_{t\in\mathbb{R}_+}\bar{\mathcal{Z}}^4\bigg]\int_0^\infty e^{-\beta t}dt\bigg)^{\frac{1}{4}}\bigg(\mathbb{E}\int_0^\infty e^{-\beta t}|\tilde{x}^\epsilon|^4dt\bigg)^{\frac{1}{4}}\\
&\leq C\bigg(\sup_{t\in\mathbb{R}_+}\mathbb{E}\big[e^{-\frac{\beta}{2} t}|\tilde{x}^\epsilon|^4\big]\int_0^\infty e^{-\frac{\beta}{2} t}dt\bigg)^{\frac{1}{4}}\rightarrow0,\quad \text{as}\ \epsilon\rightarrow0.
\end{aligned}
\end{equation*}
Similarly, $\mathbb{E}\int_0^\infty e^{-\beta t}\bar{\mathcal{Z}}|\bar{\tilde{z}}|\tilde{x}^\epsilon dt\rightarrow0,\ \text{as}\ \epsilon\rightarrow0$,
and
\begin{equation*}
\begin{aligned}
&\mathbb{E}\int_0^\infty e^{-\beta t}\bar{\mathcal{Z}}||\bar{\gamma}||\tilde{x}^\epsilon dt\\
&\leq \bigg(\mathbb{E}\int_0^\infty \int_{\mathcal{E}}e^{-\beta t}|\bar{\gamma}|^2N(de,dt)\bigg)^{\frac{1}{2}}\bigg(\mathbb{E}\bigg[\sup_{t\in\mathbb{R}_+}\bar{\mathcal{Z}}^4\bigg]
 \int_0^\infty e^{-\beta t}dt\bigg)^{\frac{1}{4}}\bigg(\mathbb{E}\int_0^\infty e^{-\beta t}|\tilde{x}^\epsilon|^4dt\bigg)^{\frac{1}{4}}\\
&\leq C\bigg(\sup_{t\in\mathbb{R}_+}\mathbb{E}\big[e^{-\frac{\beta}{2} t}|\tilde{x}^\epsilon|^4\big]\int_0^\infty e^{-\frac{\beta}{2} t}dt\bigg)^{\frac{1}{4}}\rightarrow0,\quad \text{as}\ \epsilon\rightarrow0,
\end{aligned}
\end{equation*}
\begin{equation*}
\begin{aligned}
&\mathbb{E}\int_0^\infty e^{-\beta t}\bar{\mathcal{Z}}|\bar{u}|\tilde{x}^\epsilon dt\leq \bigg(\mathbb{E}\int_0^\infty e^{-\beta t}\bar{\mathcal{Z}}^4dt\bigg)^{\frac{1}{4}}
 \bigg(\mathbb{E}\int_0^\infty e^{-\beta t}|\bar{u}|^4dt\bigg)^{\frac{1}{4}}\bigg(\mathbb{E}\int_0^\infty e^{-\beta t}|\tilde{x}^\epsilon|^2dt\bigg)^{\frac{1}{2}}\\
&\leq C\bigg(\mathbb{E}\bigg[\sup_{t\in\mathbb{R}_+}\bar{\mathcal{Z}}^4\bigg]\int_0^\infty e^{-\beta t}dt\bigg)^{\frac{1}{4}}
 \bigg(\sup_{t\in\mathbb{R}_+}\mathbb{E}\big[e^{-\frac{\beta}{2} t}|\tilde{x}^\epsilon|^2\big]\int_0^\infty e^{-\frac{\beta}{2} t}dt\bigg)^{\frac{1}{2}}\rightarrow0,\quad \text{as}\ \epsilon\rightarrow0.
\end{aligned}
\end{equation*}
Therefore, we have
\begin{equation}\label{I}
\lim_{\epsilon\rightarrow0}\mathbf{D}_{21}=\mathbb{E}\int_0^\infty e^{-\beta t}\bar{\mathcal{Z}}\bar{f}_xx_1dt.
\end{equation}
Similarly, we have
$\lim_{\epsilon\rightarrow0}\mathbf{D}_{22}=\mathbb{E}\int_0^\infty e^{-\beta t}\bar{\mathcal{Z}}\bar{f}_yy_1dt$,
the only difference is the norm of $y$ where the sup is in the expectation $\mathbb{E}$.

Note that the third term can be managed as follows:
\begin{equation*}
\begin{aligned}
\mathbf{D}_{23}&=\mathbb{E}\int_0^\infty e^{-\beta t}\bigg(\int_0^1F_zd\theta-\bar{F}_z\bigg)\tilde{z}^\epsilon dt+\mathbb{E}\int_0^\infty e^{-\beta t}\bar{F}_z\tilde{z}^\epsilon dt\\
&\quad+\mathbb{E}\int_0^\infty e^{-\beta t}\bigg(\int_0^1F_zd\theta-\bar{F}_z\bigg)z_1dt+\mathbb{E}\int_0^\infty e^{-\beta t}\bar{F}_zz_1dt,
\end{aligned}
\end{equation*}
where
\begin{equation*}
\begin{aligned}
&\mathbb{E}\int_0^\infty e^{-\beta t}\bigg(\int_0^1F_zd\theta-\bar{F}_z\bigg)\tilde{z}^\epsilon dt\\
&\leq \bigg(\mathbb{E}\int_0^\infty e^{-\beta t}|\tilde{z}^\epsilon|^2 dt\bigg)^{\frac{1}{2}}
 \bigg(\mathbb{E}\int_0^\infty e^{-\beta t}\bigg(\int_0^1F_zd\theta-\bar{F}_z\bigg)^2dt\bigg)^{\frac{1}{2}}\rightarrow0,\quad \text{as}\ \epsilon\rightarrow0.
\end{aligned}
\end{equation*}
Similarly, $\mathbb{E}\int_0^\infty e^{-\beta t}\big(\int_0^1F_zd\theta-\bar{F}_z\big)z_1 dt\rightarrow0,\ \text{as}\ \epsilon\rightarrow0$,
and
\begin{equation*}
\begin{aligned}
&\mathbb{E}\int_0^\infty e^{-\beta t}\bar{F}_z\tilde{z}^\epsilon dt
\leq \mathbb{E}\int_0^\infty e^{-\beta t}\bar{\mathcal{Z}}\big(1+|\bar{x}|+|\bar{y}|+|\bar{z}|+|\bar{\tilde{z}}|+||\bar{\gamma}||+|\bar{u}|\big)\tilde{z}^\epsilon dt.
\end{aligned}
\end{equation*}
Then we deal with the above term by term as follows:
\begin{equation*}
\begin{aligned}
&\mathbb{E}\int_0^\infty e^{-\beta t}\bar{\mathcal{Z}}\tilde{z}^\epsilon dt\leq \bigg(\mathbb{E}\int_0^\infty e^{-\beta t}\bar{\mathcal{Z}}^2 dt\bigg)^{\frac{1}{2}}
 \bigg(\mathbb{E}\int_0^\infty e^{-\beta t}|\tilde{z}^\epsilon|^2 dt\bigg)^{\frac{1}{2}}\\
&\leq \bigg(\mathbb{E}\bigg[\sup_{t\in\mathbb{R}_+}\bar{\mathcal{Z}}^2\bigg]\int_0^\infty e^{-\beta t} dt\bigg)^{\frac{1}{2}}
 \bigg(\mathbb{E}\int_0^\infty e^{-\beta t}|\tilde{z}^\epsilon|^2 dt\bigg)^{\frac{1}{2}}\rightarrow0,\quad \text{as}\ \epsilon\rightarrow0,
\end{aligned}
\end{equation*}
\begin{equation*}
\begin{aligned}
&\mathbb{E}\int_0^\infty e^{-\beta t}\bar{\mathcal{Z}}|\bar{x}|\tilde{z}^\epsilon dt\leq \bigg(\mathbb{E}\int_0^\infty e^{-\beta t}\bar{\mathcal{Z}}^4 dt\bigg)^{\frac{1}{4}}\bigg(\mathbb{E}\int_0^\infty e^{-\beta t}|\bar{x}|^4 dt\bigg)^{\frac{1}{4}}\bigg(\mathbb{E}\int_0^\infty e^{-\beta t}|\tilde{z}^\epsilon|^2 dt\bigg)^{\frac{1}{2}}\\
&\leq \bigg(\mathbb{E}\bigg[\sup_{t\in\mathbb{R}_+}\bar{\mathcal{Z}}^4\bigg]\int_0^\infty e^{-\beta t} dt\bigg)^{\frac{1}{4}}
 \bigg(\sup_{t\in\mathbb{R}_+}\mathbb{E}\big[e^{-\frac{\beta}{2} t}|\bar{x}|^4\big]\int_0^\infty e^{-\frac{\beta}{2} t} dt\bigg)^{\frac{1}{4}}
 \bigg(\mathbb{E}\int_0^\infty e^{-\beta t}|\tilde{z}^\epsilon|^2 dt\bigg)^{\frac{1}{2}}\\
&\rightarrow0,\quad \text{as}\ \epsilon\rightarrow0.
\end{aligned}
\end{equation*}
Similarly, $\mathbb{E}\int_0^\infty e^{-\beta t}\bar{\mathcal{Z}}|\bar{y}|\tilde{z}^\epsilon dt\rightarrow0,\ \text{as}\ \epsilon\rightarrow0$, and
\begin{equation*}
\begin{aligned}
&\mathbb{E}\int_0^\infty e^{-\beta t}\bar{\mathcal{Z}}|\bar{z}|\tilde{z}^\epsilon dt\leq \mathbb{E}\bigg[\sup_{t\in\mathbb{R}_+}\bar{\mathcal{Z}}\int_0^\infty e^{-\beta t}|\bar{z}|\tilde{z}^\epsilon dt\bigg]\\
&\leq \mathbb{E}\bigg[\sup_{t\in\mathbb{R}_+}\bar{\mathcal{Z}}\bigg(\int_0^\infty e^{-\beta t}|\bar{z}|^2 dt\bigg)^{\frac{1}{2}}\bigg(\int_0^\infty e^{-\beta t}|\tilde{z}^\epsilon|^2 dt\bigg)^{\frac{1}{2}}\bigg]\\
&\leq \bigg(\mathbb{E}\bigg[\sup_{t\in\mathbb{R}_+}\bar{\mathcal{Z}}^2\bigg]\bigg)^{\frac{1}{2}}
 \bigg(\mathbb{E}\bigg(\int_0^\infty e^{-\beta t}|\bar{z}|^2 dt\bigg)^2\bigg)^{\frac{1}{4}}\bigg(\mathbb{E}\bigg(\int_0^\infty e^{-\beta t}|\tilde{z}^\epsilon|^2 dt\bigg)^2\bigg)^{\frac{1}{4}}
 \rightarrow0,\quad \text{as}\ \epsilon\rightarrow0,
\end{aligned}
\end{equation*}
\begin{equation*}
\begin{aligned}
&\mathbb{E}\int_0^\infty e^{-\beta t}\bar{\mathcal{Z}}||\bar{\gamma}||\tilde{z}^\epsilon dt\leq \mathbb{E}\bigg[\sup_{t\in\mathbb{R}_+}\bar{\mathcal{Z}}\int_0^\infty e^{-\beta t}||\bar{\gamma}||\tilde{z}^\epsilon dt\bigg]\\
&\leq \mathbb{E}\bigg[\sup_{t\in\mathbb{R}_+}\bar{\mathcal{Z}}\bigg(\int_0^\infty e^{-\beta t}||\bar{\gamma}||^2 dt\bigg)^{\frac{1}{2}}\bigg(\int_0^\infty e^{-\beta t}|\tilde{z}^\epsilon|^2 dt\bigg)^{\frac{1}{2}}\bigg]\\
&\leq \bigg(\mathbb{E}\bigg[\sup_{t\in\mathbb{R}_+}\bar{\mathcal{Z}}^2\bigg]\bigg)^{\frac{1}{2}}
 \bigg(\mathbb{E}\bigg(\int_0^\infty\int_{\mathcal{E}} e^{-\beta t}|\bar{\gamma}|^2 N(de,dt)\bigg)^2\bigg)^{\frac{1}{4}}\bigg(\mathbb{E}\bigg(\int_0^\infty e^{-\beta t}|\tilde{z}^\epsilon|^2 dt\bigg)^2\bigg)^{\frac{1}{4}}\\
&\rightarrow0,\quad \text{as}\ \epsilon\rightarrow0,
\end{aligned}
\end{equation*}
\begin{equation*}
\begin{aligned}
&\mathbb{E}\int_0^\infty e^{-\beta t}\bar{\mathcal{Z}}|\bar{u}|\tilde{z}^\epsilon dt\leq \bigg(\mathbb{E}\int_0^\infty e^{-\beta t}\bar{\mathcal{Z}}^4dt\bigg)^{\frac{1}{4}}
 \bigg(\mathbb{E}\int_0^\infty e^{-\beta t}|\bar{u}|^4dt\bigg)^{\frac{1}{4}}\bigg(\mathbb{E}\int_0^\infty e^{-\beta t}|\tilde{z}^\epsilon|^2 dt\bigg)^{\frac{1}{2}}\\
&\rightarrow0,\quad \text{as}\ \epsilon\rightarrow0.
\end{aligned}
\end{equation*}
Therefore, we have
\begin{equation}\label{III}
\lim_{\epsilon\rightarrow0}\mathbf{D}_{23}=\mathbb{E}\int_0^\infty e^{-\beta t}\bar{\mathcal{Z}}\bar{f}_zz_1dt.
\end{equation}
Similarly, $\lim_{\epsilon\rightarrow0}\mathbf{D}_{24}=\mathbb{E}\int_0^\infty e^{-\beta t}\bar{\mathcal{Z}}\bar{f}_{\tilde{z}}\tilde{z}_1dt$.
Note that the fifth term can be managed as follows:
\begin{equation*}
\begin{aligned}
\mathbf{D}_{25}&=\mathbb{E}\int_0^\infty e^{-\beta t}\bigg(\int_0^1F_\gamma d\theta-\bar{F}_\gamma\bigg)\int_{\mathcal{E}}\tilde{\gamma}^\epsilon\nu(de) dt
 +\mathbb{E}\int_0^\infty e^{-\beta t}\bar{F}_\gamma\int_{\mathcal{E}}\tilde{\gamma}^\epsilon\nu(de) dt\\
&\quad +\mathbb{E}\int_0^\infty e^{-\beta t}\bigg(\int_0^1F_\gamma d\theta-\bar{F}_\gamma\bigg)\int_{\mathcal{E}}\gamma_1\nu(de)dt
 +\mathbb{E}\int_0^\infty e^{-\beta t}\bar{F}_{\gamma}\int_{\mathcal{E}}\gamma_1\nu(de)dt,
\end{aligned}
\end{equation*}
where
\begin{equation*}
\begin{aligned}
&\mathbb{E}\int_0^\infty e^{-\beta t}\bigg(\int_0^1F_\gamma d\theta-\bar{F}_\gamma\bigg)\int_{\mathcal{E}}\tilde{\gamma}^\epsilon\nu(de) dt\\
&\leq C\bigg(\mathbb{E}\int_0^\infty\int_{\mathcal{E}} e^{-\beta t}|\tilde{\gamma}^\epsilon|^2N(de,dt)\bigg)^{\frac{1}{2}}
 \bigg(\mathbb{E}\int_0^\infty e^{-\beta t}\bigg(\int_0^1F_\gamma d\theta-\bar{F}_\gamma\bigg)^2 dt\bigg)^{\frac{1}{2}}\rightarrow0,\quad \text{as}\ \epsilon\rightarrow0.
\end{aligned}
\end{equation*}
Similarly, $\mathbb{E}\int_0^\infty e^{-\beta t}\big(\int_0^1F_\gamma d\theta-\bar{F}_\gamma\big)\int_{\mathcal{E}}\gamma_1\nu(de) dt\rightarrow0,\ \text{as}\ \epsilon\rightarrow0$,
and
\begin{equation*}
\begin{aligned}
&\mathbb{E}\int_0^\infty e^{-\beta t}\bar{F}_\gamma\int_{\mathcal{E}}\tilde{\gamma}^\epsilon\nu(de) dt\leq \mathbb{E}\int_0^\infty e^{-\beta t}\bar{\mathcal{Z}}\big(1+|\bar{x}|+|\bar{y}|+|\bar{z}|+|\bar{\tilde{z}}|+||\bar{\gamma}||+|\bar{u}|\big)\int_{\mathcal{E}}\tilde{\gamma}^\epsilon\nu(de) dt.
\end{aligned}
\end{equation*}
We can deal with the above in the following:
\begin{equation*}
\begin{aligned}
&\mathbb{E}\int_0^\infty e^{-\beta t}\bar{\mathcal{Z}}\int_{\mathcal{E}}\tilde{\gamma}^\epsilon\nu(de) dt\\
&\leq C\bigg(\mathbb{E}\int_0^\infty e^{-\beta t}\bar{\mathcal{Z}}^2dt\bigg)^{\frac{1}{2}}\bigg(\mathbb{E}\int_0^\infty e^{-\beta t}\int_{\mathcal{E}}|\tilde{\gamma}^\epsilon|^2\nu(de) dt\bigg)^{\frac{1}{2}}\\
&\leq C\bigg(\mathbb{E}\bigg[\sup_{t\in\mathbb{R}_+}\bar{\mathcal{Z}}^2\bigg]\int_0^\infty e^{-\beta t}dt\bigg)^{\frac{1}{2}}
 \bigg(\mathbb{E}\int_0^\infty\int_{\mathcal{E}} e^{-\beta t}|\tilde{\gamma}^\epsilon|^2N(de,dt)\bigg)^{\frac{1}{2}}\rightarrow0,\quad \text{as}\ \epsilon\rightarrow0,
\end{aligned}
\end{equation*}
\begin{equation*}
\begin{aligned}
&\mathbb{E}\int_0^\infty e^{-\beta t}\bar{\mathcal{Z}}|\bar{x}|\int_{\mathcal{E}}\tilde{\gamma}^\epsilon \nu(de)dt\\
&\leq C\bigg(\mathbb{E}\int_0^\infty e^{-\beta t}\bar{\mathcal{Z}}^4 dt\bigg)^{\frac{1}{4}}\bigg(\mathbb{E}\int_0^\infty e^{-\beta t}|\bar{x}|^4 dt\bigg)^{\frac{1}{4}}
 \bigg(\mathbb{E}\int_0^\infty e^{-\beta t}\int_{\mathcal{E}}|\tilde{\gamma}^\epsilon|^2 \nu(de)dt\bigg)^{\frac{1}{2}}\\
&\leq C\bigg(\mathbb{E}\bigg[\sup_{t\in\mathbb{R}_+}\bar{\mathcal{Z}}^4\bigg]\int_0^\infty e^{-\beta t} dt\bigg)^{\frac{1}{4}}
 \bigg(\sup_{t\in\mathbb{R}_+}\mathbb{E}\big[e^{-\frac{\beta}{2} t}|\bar{x}|^4\big]\int_0^\infty e^{-\frac{\beta}{2} t} dt\bigg)^{\frac{1}{4}}\\
&\quad \times \bigg(\mathbb{E}\int_0^\infty\int_{\mathcal{E}} e^{-\beta t}|\tilde{\gamma}^\epsilon|^2 N(de,dt)\bigg)^{\frac{1}{2}}\rightarrow0,\quad \text{as}\ \epsilon\rightarrow0.
\end{aligned}
\end{equation*}
Similarly, $\mathbb{E}\int_0^\infty e^{-\beta t}\bar{\mathcal{Z}}|\bar{y}|\int_{\mathcal{E}}\tilde{\gamma}^\epsilon\nu(de) dt\rightarrow0,\ \text{as}\ \epsilon\rightarrow0$,
\begin{equation*}
\begin{aligned}
&\mathbb{E}\int_0^\infty e^{-\beta t}\bar{\mathcal{Z}}|\bar{z}|\int_{\mathcal{E}}\tilde{\gamma}^\epsilon\nu(de) dt
\leq \mathbb{E}\bigg[\sup_{t\in\mathbb{R}_+}\bar{\mathcal{Z}}\int_0^\infty e^{-\beta t}|\bar{z}|\int_{\mathcal{E}}\tilde{\gamma}^\epsilon\nu(de) dt\bigg]\\
&\leq C\mathbb{E}\bigg[\sup_{t\in\mathbb{R}_+}\bar{\mathcal{Z}}\bigg(\int_0^\infty e^{-\beta t}|\bar{z}|^2 dt\bigg)^{\frac{1}{2}}
 \bigg(\int_0^\infty e^{-\beta t}\int_{\mathcal{E}}|\tilde{\gamma}^\epsilon|^2 \nu(de)dt\bigg)^{\frac{1}{2}}\bigg]\\
&\leq C\bigg(\mathbb{E}\bigg[\sup_{t\in\mathbb{R}_+}\bar{\mathcal{Z}}^2\bigg]\bigg)^{\frac{1}{2}}
 \bigg(\mathbb{E}\bigg(\int_0^\infty e^{-\beta t}|\bar{z}|^2 dt\bigg)^2\bigg)^{\frac{1}{4}}\bigg(\mathbb{E}\bigg(\int_0^\infty\int_{\mathcal{E}}e^{-\beta t}|\tilde{\gamma}^\epsilon|^2N(de,dt)\bigg)^2\bigg)^{\frac{1}{4}}\\
&\rightarrow0,\quad \text{as}\ \epsilon\rightarrow0,
\end{aligned}
\end{equation*}
\begin{equation*}
\begin{aligned}
&\mathbb{E}\int_0^\infty e^{-\beta t}\bar{\mathcal{Z}}||\bar{\gamma}||\int_{\mathcal{E}}\tilde{\gamma}^\epsilon\nu(de) dt
\leq \mathbb{E}\bigg[\sup_{t\in\mathbb{R}_+}\bar{\mathcal{Z}}\int_0^\infty e^{-\beta t}||\bar{\gamma}||\int_{\mathcal{E}}\tilde{\gamma}^\epsilon\nu(de) dt\bigg]\\
&\leq C\mathbb{E}\bigg[\sup_{t\in\mathbb{R}_+}\bar{\mathcal{Z}}\bigg(\int_0^\infty e^{-\beta t}||\bar{\gamma}||^2 dt\bigg)^{\frac{1}{2}}
 \bigg(\int_0^\infty e^{-\beta t}\int_{\mathcal{E}}|\tilde{\gamma}^\epsilon|^2 \nu(de)dt\bigg)^{\frac{1}{2}}\bigg]\\
&\leq C\bigg(\mathbb{E}\bigg[\sup_{t\in\mathbb{R}_+}\bar{\mathcal{Z}}^2\bigg]\bigg)^{\frac{1}{2}}\bigg(\mathbb{E}\bigg(\int_0^\infty\int_{\mathcal{E}} e^{-\beta t}|\bar{\gamma}|^2 N(de,dt)\bigg)^2\bigg)^{\frac{1}{4}}\\
&\quad \times \bigg(\mathbb{E}\bigg(\int_0^\infty\int_{\mathcal{E}} e^{-\beta t}|\tilde{\gamma}^\epsilon|^2N(de,dt)\bigg)^2\bigg)^{\frac{1}{4}}\rightarrow0,\quad \text{as}\ \epsilon\rightarrow0,
\end{aligned}
\end{equation*}
\begin{equation*}
\begin{aligned}
&\mathbb{E}\int_0^\infty e^{-\beta t}\bar{\mathcal{Z}}|\bar{u}|\int_{\mathcal{E}}\tilde{\gamma}^\epsilon\nu(de) dt\\
&\leq C\bigg(\mathbb{E}\int_0^\infty e^{-\beta t}\bar{\mathcal{Z}}^4dt\bigg)^{\frac{1}{4}}\bigg(\mathbb{E}\int_0^\infty e^{-\beta t}|\bar{u}|^4dt\bigg)^{\frac{1}{4}}\bigg(\mathbb{E}\int_0^\infty\int_{\mathcal{E}} e^{-\beta t}|\tilde{\gamma}^\epsilon|^2 N(de,dt)\bigg)^{\frac{1}{2}}\\
&\rightarrow0,\quad \text{as}\ \epsilon\rightarrow0.
\end{aligned}
\end{equation*}
Therefore, we have
\begin{equation}\label{V}
\lim_{\epsilon\rightarrow0}\mathbf{D}_{25}=\mathbb{E}\int_0^\infty e^{-\beta t}\bar{\mathcal{Z}}\bar{f}_{\gamma}\int_{\mathcal{E}}\gamma_1\nu(de)dt.
\end{equation}
The last two term can be managed similarly, so we give the followings directly,
\begin{equation}\label{VII}
\begin{aligned}
&\lim_{\epsilon\rightarrow0}\big(\mathbf{D}_{26}+\mathbf{D}_{27}\big)=\mathbb{E}\int_0^\infty e^{-\beta t}\bar{f}\mathcal{Z}_1dt+\mathbb{E}\int_0^\infty e^{-\beta t}\bar{F}_uv dt.\\
\end{aligned}
\end{equation}
Therefore, from \eqref{10}--\eqref{VII}, our variational inequality can be derived as follows:
\begin{equation*}
\begin{aligned}
&\lim_{\epsilon\rightarrow0}\epsilon^{-1}\big[J(u^\epsilon)-J(\bar{u})\big]=\mathbb{E}\big[\phi_y(\bar{y}_0)y_{(1,0)}\big]
 +\mathbb{E}\int_0^\infty e^{-\beta t}\bigg(\bar{\mathcal{Z}}_t\bar{f}_xx_{(1,t)}+\bar{\mathcal{Z}}_t\bar{f}_yy_{(1,t)}\\
&\qquad +\bar{\mathcal{Z}}_t\bar{f}_zz_{(1,t)}+\bar{\mathcal{Z}}_t\bar{f}_{\tilde{z}}\tilde{z}_{(1,t)}
 +\bar{\mathcal{Z}}_t\bar{f}_{\gamma}\int_{\mathcal{E}}\gamma_{(1,t,e)}\nu(de)+\bar{f}\mathcal{Z}_{(1,t)}+\bar{\mathcal{Z}}_t\bar{f}_uv_t\bigg)dt\geq0,
\end{aligned}
\end{equation*}
The proof is complete.

\end{document}